\let\SF@@footnote\footnote
\def\footnote{\ifx\protect\@typeset@protect
    \expandafter\SF@@footnote
  \else
    \expandafter\SF@gobble@opt
  \fi
}
\def\csname SF@gobble@opt \endcsname{\@ifnextchar[%]
  \SF@gobble@twobracket
  \@gobble
}
\edef\SF@gobble@opt{\noexpand\protect
  \expandafter\noexpand\csname SF@gobble@opt \endcsname}
\def\SF@gobble@twobracket[#1]#2{}
\numberwithin{equation}{section}
\numberwithin{figure}{section}
\theoremstyle{plain}
\newtheorem{thm}{\protect\theoremname}[section]
  \theoremstyle{plain}
  \newtheorem{cor}[thm]{\protect\corollaryname}
  \theoremstyle{plain}
  \newtheorem{lem}[thm]{\protect\lemmaname}
  \providecommand{\corollaryname}{Corollary}
  \providecommand{\lemmaname}{Lemma}
\providecommand{\theoremname}{Theorem}
\begin{document}

\title[The Surreal Numbers: A Development in Univalent Foundations]{{\normalsize{}The Surreal Numbers as an Ordered Commutative Ring
with an Apartness: A Development in Univalent Foundations }}

\author{Jean S. Joseph}

\maketitle
\tableofcontents{}

\keywords{\textit{Univalent Foundations, Constructive Mathematics, Surreal
Numbers, Tree of Surreal Numbers, Inductive Types, Higher Inductive-Inductive
Types, Ordered Abelian Groups, Infinite-Time Machines}}

\section{Introduction}

On the nlab website \cite{key-22}, an open problem was to define
a multiplication on the surreal numbers as defined in \cite{key-24}.
In what follows, we give a solution to that problem. The surreal numbers
were introduced by Conway in \cite{key-12-1}. In \cite{key-24} they
are developed in the context of the univalent program. For more on
the univalent program, see \cite{key-24,key-13,key-25,key-26,key-27}.
Instead of doing mathematics in set theory, we will work in type theory.
See \cite{key-19,key-24} for an introduction to type theory. We will
prove all of our theorems constructively. We also will work constructively
at the meta level. By ``constructively'' we mean in the framework
of constructive mathematics, which is mathematics without the law
of excluded middle. The literature on constructive mathematics is
vast, but one can have a taste of this trend from \cite{key-2-1,key-3-1,key-6-1,key-9}.

\subsection{Defining a multiplication, not a simple trick}

Defining a multiplication can be complicated. Take the natural numbers
$m,n$. To define $mn$, one has $m0\coloneqq0$ and then $m\left(n+1\right)\coloneqq mn+n$;
it is not immediate why addition is needed. For the real numbers,
the product of $\mathbf{x},\mathbf{y}$, where they are families of
ordered pairs of rationals as in \cite{key-7-1}, is the family whose
elements are pairs of the form $\left(\min\left\{ aa',ab',ba',bb'\right\} ,\max\left\{ aa',ab',ba',bb'\right\} \right)$,
with $\left(a,b\right)\in\mathbf{x}$ and $\left(a',b'\right)\in\mathbf{y}$.
Dedekind did not bother defining multiplication on his cuts. He wrote
in \cite{key-10}, relating to multiplication and other operations,
``The excessive length that is to be feared in the definitions of
the more complicated operations is partly inherent in the nature of
the subject but can for the most part be avoided.'' As for the complex
numbers, the product of $x+yi$ and $x'+y'i$ is $\left(xx'-yy'\right)+\left(xy'+x'y\right)i$.
Is that simple?

For an $m\times n$ matrix $A$ and an $n\times k$ matrix $B$, the
$i,j$ entry of the product $AB$ is $\sum_{l=1}^{n}A_{i,l}B_{l,j}$.
What about the quaternions? Given quaternions $x_{1}+x_{2}\mathbf{i}+x_{3}\mathbf{j}+x_{4}\mathbf{k}$
and $y_{1}+y_{2}\mathbf{i}+y_{3}\mathbf{j}+y_{4}\mathbf{k}$, their
product is $p_{1}+p_{2}\mathbf{i}+p_{3}\mathbf{j}+p_{4}\mathbf{k}$
with
\[
p_{1}=x_{1}y_{1}-x_{2}y_{2}-x_{3}y_{3}-x_{4}y_{4},
\]
\[
p_{2}=x_{1}x_{2}+x_{2}y_{1}+x_{3}y_{4}-x_{4}y_{3},
\]
\[
p_{3}=x_{1}y_{3}-x_{2}y_{4}+x_{3}y_{1}+x_{4}y_{2},
\]
\[
p_{4}=x_{1}y_{4}+x_{2}y_{3}-x_{3}y_{2}+x_{4}y_{1}.
\]
 In \cite{key-12-1}, Conway's definition of multiplication of surreal
numbers $\left\{ x^{L}|x^{R}\right\} $ and $\left\{ y^{L}|y^{R}\right\} $
is far from being simple; it is the surreal number 
\[
\left\{ x^{L}y+xy^{L}-x^{L}y^{L},x^{R}y+xy^{R}-x^{R}y^{R}|x^{L}y+xy^{R}-x^{L}y^{R},x^{R}y+xy^{L}-x^{R}y^{L}\right\} .
\]
 The list can get longer to include the cross product of vectors,
the multiplication on a tensor algebra, and so on.

\section{Preliminaries }

\subsection{Mere propositions and sets}

A proposition $P$ in type theory is a type, and a proof of $P$ is
an element of $P$. As a type, $P$ can have several elements. If
all the elements of $P$ are equal, then $P$ is called a \textit{mere
proposition}.

Equality on any type $T$ is a type, so for all $x,y:T$, the type
$x=y$ is defined, which may be inhabited or empty. \textit{Sets}
are those types with equalities that are mere propositions.

\subsection{Ordered sets}

As in \cite{key-15}, an ordered set is a set $X$ with a binary relation
$<:X\rightarrow X\rightarrow\mathsf{Prop}$, where $\mathsf{Prop}$
is the type of mere propositions, such that for all $x,y,z:X$, 
\begin{enumerate}
\item $x<y$ implies $\neg y<x$; (Asymmetry)
\item $x<y$ implies $x<z$ or $z<y$; (Cotransitivity)
\item $\neg x<y$ and $\neg y<x$ imply $x=y$. (Negative Antisymmetry)
\end{enumerate}

\subsection{Inductive types}

The canonical example of an inductive type probably is the natural
numbers. Their constructors are $0$ and the successor function, and
all the other natural numbers are formed from these constructors.
In general, an inductive type is a type whose elements are formed
by constructors that can be elements and functions. Other examples
of inductive types are $\mathbf{1}$, containing exactly one element,
and $\mathbf{2}$, containing two elements. There are other variants
of these types. A higher inductive type has constructors for elements
and constructors for paths; an example is the circle. A higher inductive-inductive
type $T$ has the two former constructors, with a simultaneous generation
of type families indexed by $T$; the surreal numbers are of this
kind. For more on inductive types, see \cite{key-24}, chap. 5,6 and
\cite{key-1,key-18}.

\subsection{The surreal numbers}

The surreal numbers in \cite{key-24} are defined slightly differently
from Conway's definition in \cite{key-12-1}. In \cite{key-24}, they
are the type $\mathsf{No}$ with relations $<:\mathsf{No}\rightarrow\mathsf{No}\rightarrow\mathcal{U}$
and $\leq:\mathsf{No}\rightarrow\mathsf{No}\rightarrow\mathcal{U}$,
where $\mathcal{U}$ is the universe of types. The type $\mathsf{No}$
has constructors consisting of functions $\mathcal{L}\rightarrow\mathcal{U}$
and $\mathcal{R}\rightarrow\mathcal{U}$, with $\mathcal{L},\mathcal{R}:\mathcal{U}$,
which satisfies, for each $L:\mathcal{L}$ and each $R:\mathcal{L}$,
$x^{L}<x^{R}$ implies there is a surreal number; and if for each
$x,y:\mathsf{No}$, $x\leq y$ and $y\leq x$, then $x=y$. A surreal
number $x$ is said to be given by a cut if $x$ is written as $\left\{ x^{L}|x^{R}\right\} $,
where $x^{L}$ is called a left option of $x$ and $x^{R}$ a right
option of $x$.

The constructors for $<$ are as follows. For any surreal numbers
$x,y$ given by cuts, if there is $L$ such that $x\leq y^{L}$ or
if there is $R$ such that $x^{R}\leq y$, then $x<y$. In addition,
for all $p,q:x<y$, it follows $p=q$. The last condition guarantees
that $x<y$ is a mere proposition. The constructors for $\leq$ are
as follows. For any $x,y:\mathsf{No}$ given by cuts, if for all $L$,
$x^{L}<y$ and for all $R$, $x<y^{R}$, then $x\le y$. Also, $x\leq y$
is a mere proposition. 

As for the natural numbers, there is an induction principle for the
surreal numbers, the simplest form of which is, to prove for all $x:\mathsf{No}$,
$P\left(x\right)$, where $x$ is given by cuts and $P\left(x\right)$
is a mere proposition, it suffices to prove, given $x:\mathsf{No}$,
for all $L,R$, $P\left(x^{L}\right)$ and $P\left(x^{R}\right)$
imply $P\left(x\right)$. Such an induction principle can be extended
to any finite number of variables.

An addition on the surreal numbers is defined for all $x,y$ given
by cuts as $\left\{ x^{L}+y,x+y^{L}|x^{R}+y,x+y^{R}\right\} $, and
a negation is $\left\{ -x^{R}|-x^{L}\right\} $ for all $x$ given
by cuts.

\subsection{The real numbers}

There are several versions and several systems of real numbers; see
\cite{key-10,key-10-1,key-15,key-16,key-23,key-24,key-3-1,key-4-1,key-5-2,key-7-1}.
Although most of them are isomorphic (with the axiom of countable
choice), there is always a benefit to use a version over another.
Here we will go with real numbers given by cuts. Bishop's cuts in
\cite{key-3-1} would do fine, but more machinery needs to be introduced.
The Dedekind cuts in (\cite{key-24}, p. 481) would equally do. A
\textit{Dedekind cut} is a pair $\left(L,R\right)$ of mere predicates
$L:\mathbb{Q}\rightarrow\Omega$ and $R:\mathbb{Q}\rightarrow\Omega$,
with $\Omega$ the type of mere propositions, such that 
\begin{enumerate}
\item There exist $q,q':\mathbb{Q}$ such that $L\left(q\right)$ and $R\left(q'\right)$;
\item For all $q,q':\mathbb{Q}$, if $L\left(q\right)$, then there is $p$
such that $q<p$ and $L\left(p\right)$, and if $R\left(q'\right)$,
then there is $p'$ such that $p'<q'$ and $R\left(p'\right)$;
\item For all $q:\mathbb{Q}$, the proposition $L\left(q\right)\wedge R\left(q\right)$
is false;
\item For all $q,q':\mathbb{Q}$, if $q<q'$, then either $L\left(q\right)$
or $R\left(q'\right)$.
\end{enumerate}
We refer to the above four properties as the ``cutness'' of $\left(L,R\right)$,
and $\left(L,R\right)<\left(L',R'\right)$ is defined as there is
$q:\mathbb{Q}$ such that the proposition $R\left(q\right)\wedge L\left(q\right)$
holds. Each $q:\mathbb{Q}$ can be written as $\left(L_{q},R_{q}\right)$,
with $L_{q}\left(p\right)\coloneqq p<q$ and $R_{q}\left(p\right)\coloneqq q<p$.

\subsection{Ordinals}

Instead of using Cantor's ordinals in \cite{key-11-1}, we will use
the ordinals in \cite{key-24}. An \textit{ordinal} is a set $X$
with a binary relation $<$ such that 
\begin{enumerate}
\item For each $x:X$ and any type family $P:X\rightarrow\mathsf{Prop}$,
the type $P\left(x\right)$ is inhabited whenever for all $y<x$,
$P\left(y\right)$ is inhabited;
\item If for all $z:X$, $z<x$ if and only if $z<y$, then $x=y$;
\item For all $x,y,z$, if $x<y$ and $y<z$, then $x<z$.
\end{enumerate}

\section{The Relation $<$ on $\mathsf{No}$ is not Trichotomous}

The surreal numbers have a copy of the real numbers. An order-preserving
function from the real numbers to $\mathsf{No}$ will show this shortly.
Since a surreal number is given by a cut, we will use the real numbers
given by Dedekind cuts (See Section 2.). To define a function $f:\mathsf{R}\rightarrow\mathsf{No}$,
we assume, given each Dedekind cut $\left(L,R\right)$, that $f:\mathsf{pr_{1}}\left(\sum_{q:\mathbb{Q}}L\left(q\right)\right)\rightarrow\mathsf{No}$
is defined%
\footnote{Given a type family $B:A\rightarrow\mathcal{U}$, the sigma type $\sum_{a:A}B\left(a\right)$
consists of pair $\left(a,b\right)$ with $b:B\left(a\right)$. The
projection function $\mathsf{pr}_{1}:\sum_{a:A}B\left(a\right)\rightarrow A$
is defined by $\mathsf{pr}_{1}\left(a,b\right)=a$.%
} and $l<l'$ in $\mathsf{pr_{1}}\left(\sum_{q:\mathbb{Q}}L\left(q\right)\right)$
if and only if $f\left(l\right)<f\left(l'\right)$, that $f:\mathsf{pr_{1}}\left(\sum_{q:\mathbb{Q}}R\left(q\right)\right)\rightarrow\mathsf{No}$
is defined and $r<r'$ in $\mathsf{pr_{1}}\left(\sum_{q:\mathbb{Q}}R\left(q\right)\right)$
if and only if $f\left(r\right)<f\left(r'\right)$, and that $f$
preserves the ``cutness'' of $\left(L,R\right)$. Then we define
$f\left(\left(L,R\right)\right)=\left\{ f\left(\mathsf{pr_{1}}\left(\sum_{q:\mathbb{Q}}L\left(q\right)\right)\right)|f\left(\mathsf{pr_{1}}\left(\sum_{q:\mathbb{Q}}R\left(q\right)\right)\right)\right\} $.
\begin{thm}
Let $f:\mathsf{R}\rightarrow\mathsf{No}$ be as defined above. Then
$\left(L,R\right)<\left(L',R'\right)$ if and only if $f\left(\left(L,R\right)\right)<f\left(\left(L',R'\right)\right)$.
\label{thm: 14}
\end{thm}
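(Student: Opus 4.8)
The plan is to prove the two implications separately, working in both directions directly from the defining equation $f((L,R))=\{\,f(q):L(q)\mid f(q):R(q)\,\}$ together with a short list of standard facts about the order on $\mathsf{No}$ from \cite{key-24}: $\leq$ is reflexive, $<$ is irreflexive and transitive, $x\leq y$ implies $\neg(y<x)$, in any cut presentation $y=\{y^{L}\mid y^{R}\}$ one has $y^{L}<y$ and $y<y^{R}$, and the inversion principle that $x<y$ holds precisely when there is a left option $y^{L}$ with $x\leq y^{L}$ or a right option $x^{R}$ with $x^{R}\leq y$. I will also use two immediate consequences of ``cutness'': from properties (3) and (4) the predicate $L$ is downward closed and $R$ upward closed in $\mathbb{Q}$; and $(L,R)<(L',R')$ amounts to the existence of $q:\mathbb{Q}$ with $R(q)$ and $L'(q)$. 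It is worth noting that the order-preservation hypotheses on $f$ are not needed for this theorem — they only guarantee that $f((L,R))$ is a well-formed surreal number.

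For the forward implication I would assume $(L,R)<(L',R')$ and, since the goal $f((L,R))<f((L',R'))$ is a mere proposition, fix $q:\mathbb{Q}$ with $R(q)$ and $L'(q)$. As $R(q)$ holds, $f(q)$ occurs as a right option in the cut defining $f((L,R))$, so $f((L,R))<f(q)$; as $L'(q)$ holds, $f(q)$ occurs as a left option in the cut defining $f((L',R'))$, so $f(q)<f((L',R'))$. Transitivity of $<$ on $\mathsf{No}$ then yields $f((L,R))<f((L',R'))$.

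For the reverse implication I would assume $f((L,R))<f((L',R'))$ and apply the inversion principle, producing two cases. In the first there is $q'$ with $L'(q')$ and $f((L,R))\leq f(q')$. Here I first argue $\neg L(q')$: were $L(q')$ to hold, $f(q')$ would be a left option of $f((L,R))$, so $f(q')<f((L,R))$, contradicting $f((L,R))\leq f(q')$. By cutness property (2) for $(L',R')$ there is $p>q'$ with $L'(p)$, and property (4) for $(L,R)$ applied to $q'<p$ gives $L(q')\vee R(p)$, hence $R(p)$; then $R(p)$ and $L'(p)$ witness $(L,R)<(L',R')$. In the second case there is $q_{0}$ with $R(q_{0})$ and $f(q_{0})\leq f((L',R'))$; symmetrically, were $R'(q_{0})$ to hold, $f(q_{0})$ would be a right option of $f((L',R'))$, so $f((L',R'))<f(q_{0})$, contradicting $f(q_{0})\leq f((L',R'))$, so $\neg R'(q_{0})$. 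By property (2) for $(L,R)$ there is $p<q_{0}$ with $R(p)$, and property (4) for $(L',R')$ applied to $p<q_{0}$ gives $L'(p)\vee R'(q_{0})$, hence $L'(p)$; and again $R(p)$ and $L'(p)$ witness $(L,R)<(L',R')$. Every disjunction- and existential-elimination above is justified because the conclusion being derived is a mere proposition.

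The step I expect to be the main obstacle is exactly the non-strictness built into the inversion principle: it only returns a rational $q'$ (or $q_{0}$) lying \emph{weakly} on the correct side of one of the two reals, and a priori that rational could sit on the cut itself, so it does not by itself produce a witness for $(L,R)<(L',R')$. Converting this weak separation into a genuine separating rational $q$ with $R(q)$ and $L'(q)$ is what the combination of $\neg L(q')$ (forced by the $\mathsf{No}$-order facts) with locatedness (property (4)) and the no-maximum/no-minimum property (property (2)) of Dedekind cuts accomplishes; arranging that interplay correctly, rather than any computation, is where the care lies.
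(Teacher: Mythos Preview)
Your argument is correct. The forward direction is immediate from the cut presentation of $f((L,R))$ together with transitivity of $<$ on $\mathsf{No}$, and in the reverse direction your use of the inversion principle for $<$, the observation that the weakly-separating rational cannot lie in the ``wrong'' predicate (else one contradicts $x\leq y\Rightarrow\neg(y<x)$), and the subsequent appeal to openness~(2) and locatedness~(4) to manufacture a genuinely separating rational are all sound and constructively valid. Your remark that the truncation of the goals licenses the existential and disjunctive eliminations is also to the point.

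As for comparison: the paper states Theorem~\ref{thm: 14} without proof, so there is nothing to compare against. Your write-up supplies exactly the argument one would expect and could serve as the missing proof. One small stylistic note: you might mention explicitly that the ``inversion principle'' you invoke is Theorem~11.6.7 of \cite{key-24} (the converse to the $<$-constructors), since the paper only records the constructor direction in its summary of $\mathsf{No}$.
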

An asymmetric binary relation $<$ on a set is \textit{trichotomous
}if, for all $x,y$, $x<y$, $y<x$, or $x=y$. Constructively, the
binary relation $<$ on $\mathsf{R}$ cannot be shown to be trichotomous: 
\begin{thm}
``The binary relation $<$ on $\mathsf{R}$ is trichotomous'' implies
the limited principle of omniscience (LPO), which says for all $\left(a_{n}\right):\mathbf{2}^{\mathbb{N}}$,
either for all $n$, $a_{n}=0$, or there is $n$ such that $a_{n}=1$.
\label{thm: 15}\end{thm}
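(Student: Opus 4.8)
The plan is to derive, from the assumed trichotomy of $<$ on $\mathsf{R}$, a decision procedure for an arbitrary binary sequence $\left(a_{n}\right):\mathbf{2}^{\mathbb{N}}$. First I would manufacture from $\left(a_{n}\right)$ a real number $x$ that is always $\geq0$, equals $0$ exactly when every $a_{n}=0$, and is $>0$ exactly when some $a_{n}=1$; comparing $x$ with the rational $0$ (viewed as a Dedekind cut) then decides which alternative of LPO holds. Concretely, set $s_{n}:=\sum_{k=0}^{n}a_{k}2^{-k}$, a rational computable from $a_{0},\dots,a_{n}$, and define $L\left(q\right):\equiv\exists n.\,q<s_{n}$ and $R\left(q\right):\equiv\exists n.\,s_{n}+2^{-n}<q$. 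These are mere predicates $\mathbb{Q}\rightarrow\Omega$ because the existentials are propositionally truncated. (Morally $x=\sum_{k}a_{k}2^{-k}$, but it is cleaner never to invoke that infinite sum and to reason only about the $s_{n}$.)

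Next I would verify the four ``cutness'' conditions for $\left(L,R\right)$. Inhabitedness holds because $L\left(-1\right)$ and $R\left(3\right)$ (using $0\leq s_{0}\leq1$ and $s_{0}+2^{0}\leq2<3$); roundedness follows from density of $\mathbb{Q}$; disjointness reduces to the elementary estimate $s_{m}<s_{n}+2^{-n}$, valid for all $m,n$, which one obtains by splitting on whether $m\leq n$. The delicate condition --- and the step I expect to be the main obstacle --- is locatedness: given rationals $q<q'$ one must produce $L\left(q\right)$ or $R\left(q'\right)$ \emph{constructively}, with no way to decide the real $x$ against $q'$. Here I would use the Archimedean property of $\mathbb{Q}$ to choose $n$ with $2^{-n}<q'-q$, and then decide the \emph{rational} inequality $q<s_{n}$: if $q<s_{n}$ then $L\left(q\right)$ holds with witness $n$; if $s_{n}\leq q$ then $s_{n}+2^{-n}\leq q+2^{-n}<q'$, so $R\left(q'\right)$ holds with witness $n$.

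With $\left(L,R\right)$ now a genuine real $x$, I would unwind the order on $\mathsf{R}$: $0<x$ holds iff there is a rational $q$ with $0<q$ and $L\left(q\right)$, hence iff $\exists n.\,s_{n}>0$, hence iff $\exists k.\,a_{k}=1$ (a finite sum of nonnegative rationals is positive exactly when one of its terms is, and since equality in $\mathbf{2}$ is decidable the search through $k\leq n$ is legitimate); meanwhile $x<0$ is outright false, since every $s_{n}+2^{-n}>0$. Finally, apply the hypothesised trichotomy to $0$ and $x$. The alternative $x<0$ is absurd; the alternative $0<x$ yields $\exists k.\,a_{k}=1$; and the alternative $0=x$ yields $\neg\left(0<x\right)$ by asymmetry (as $0<0$ is impossible), hence $\neg\exists k.\,a_{k}=1$, hence $\forall k.\,a_{k}=0$. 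In every case the instance of LPO for $\left(a_{n}\right)$ is decided, which is what we wanted.
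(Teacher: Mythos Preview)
Your proof is correct and follows the same overall strategy as the paper: encode the binary sequence as a real number (morally $\sum_{k} a_{k}2^{-k}$), apply trichotomy against $0$, and analyse the three cases. The paper's version is terser in two respects. First, it reduces at the outset to \emph{decreasing} binary sequences, so that in the case $x=0$ one only needs to observe $a_{0}=0$ and decreasingness finishes the job; you instead handle arbitrary sequences and argue $0=x\Rightarrow\neg(0<x)\Rightarrow\neg\exists k.\,a_{k}=1\Rightarrow\forall k.\,a_{k}=0$ via decidability of $\mathbf{2}$. Second, the paper obtains the real by citing convergence of the geometric series, whereas you build the Dedekind cut $(L,R)$ by hand and verify all four conditions, in particular locatedness via the Archimedean choice of $n$ with $2^{-n}<q'-q$ and the decidable rational comparison $q<s_{n}$. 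The payoff of your route is a self-contained argument entirely within the paper's Dedekind-cut framework, with no appeal to an external convergence result and no preliminary reduction; the paper's route is shorter. Neither has a genuine advantage in strength---they are two presentations of the same idea.
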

\begin{proof}
Let $\left(a_{n}\right)$ be a decreasing%
\footnote{Note that if each decreasing binary sequence has all zero terms or
has a term equal to $1$, then LPO.%
} binary sequence. Let $x=\sum_{n=0}^{\infty}\frac{a_{n}}{2^{n}}$.
Note that $x$, as a geometric series%
\footnote{See \cite{key-4-1,key-5-2}.%
}, converges in $\mathsf{R}$. By assumption, $x<0$, $x>0$, or $x=0$.
If $x=0$, then $a_{0}=0$, so $a_{n}=0$ for all $n$. Since each
$\frac{a_{n}}{2^{n}}$ is nonnegative, each partial sum is nonnegative,
so $x<0$ is impossible. If $0<x$, there is a nonnegative integer
$N$ such that $0<\sum_{n=0}^{n=N}\frac{a_{n}}{2^{n}}$ , so there
is $k$ in $\left\{ 0,\ldots,N\right\} $ such that $a_{k}=1$. Therefore,
LPO.\end{proof}
\begin{cor}
``The binary relation $<$ on $\mathsf{No}$ is trichotomous'' implies
the limited principle of omniscience.\end{cor}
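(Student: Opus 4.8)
The plan is to deduce trichotomy of $<$ on $\mathsf{R}$ from trichotomy of $<$ on $\mathsf{No}$, and then to invoke Theorem~\ref{thm: 15}. Concretely, assume ``$<$ on $\mathsf{No}$ is trichotomous'' and let $x=\left(L,R\right)$ and $y=\left(L',R'\right)$ be arbitrary real numbers. I would feed $f\left(x\right)$ and $f\left(y\right)$, where $f:\mathsf{R}\rightarrow\mathsf{No}$ is the map of Theorem~\ref{thm: 14}, into the assumed trichotomy; this produces three cases: $f\left(x\right)<f\left(y\right)$, $f\left(y\right)<f\left(x\right)$, or $f\left(x\right)=f\left(y\right)$.

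In the first case, Theorem~\ref{thm: 14} gives $x<y$ at once, since $\left(L,R\right)<\left(L',R'\right)$ is equivalent to $f\left(\left(L,R\right)\right)<f\left(\left(L',R'\right)\right)$; the second case gives $y<x$ symmetrically. The third case is the one needing a short argument: from $f\left(x\right)=f\left(y\right)$ I would conclude $x=y$ via negative antisymmetry of $<$ on $\mathsf{R}$. For this, recall that $<$ on $\mathsf{No}$ is asymmetric (from the development of $\mathsf{No}$ in \cite{key-24}), hence irreflexive, since $a<a$ together with $\neg\left(a<a\right)$ is a contradiction. If $x<y$ held, Theorem~\ref{thm: 14} would give $f\left(x\right)<f\left(y\right)$, and rewriting along $f\left(x\right)=f\left(y\right)$ yields $f\left(x\right)<f\left(x\right)$, which is impossible; thus $\neg\left(x<y\right)$, and likewise $\neg\left(y<x\right)$. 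Negative antisymmetry of $<$ on $\mathsf{R}$ then forces $x=y$.

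Putting the three cases together shows that $<$ on $\mathsf{R}$ is trichotomous, so Theorem~\ref{thm: 15} yields LPO, which is the claim.

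The only delicate point — and hence the place I expect any friction — is the equality case: one must transport $f\left(x\right)=f\left(y\right)$ back along $f$ to $x=y$. Rather than attempting to invert $f$, the clean route is through irreflexivity of $<$ on $\mathsf{No}$ and negative antisymmetry of $<$ on $\mathsf{R}$, as above; the other two cases are immediate from Theorem~\ref{thm: 14}.
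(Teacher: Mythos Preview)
Your proof is correct and follows the same route as the paper: reduce trichotomy on $\mathsf{No}$ to trichotomy on $\mathsf{R}$ via Theorem~\ref{thm: 14}, then apply Theorem~\ref{thm: 15}. The paper's own argument is a two-line sketch that invokes Theorem~\ref{thm: 14} without spelling out the equality case; your handling of $f(x)=f(y)$ via irreflexivity of $<$ on $\mathsf{No}$ and negative antisymmetry on $\mathsf{R}$ is exactly the detail the paper leaves implicit.
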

\begin{proof}
If $<$ on $\mathsf{No}$ is trichotomous, then $<$ on $\mathsf{R}$
is trichotomous by Theorem \ref{thm: 14}, so LPO by Theorem \ref{thm: 15}.
\end{proof}

\section{Infinite-Time Machines}

A machine can be understood as something that performs some tasks,
such as a computation. A finite-time machine performs some tasks in
a finite number of steps. An example is a machine which outputs a
natural number $n$ at step $n$, so for each natural $n$, this machine
outputs $n+1$ numbers in $n+1$ steps. Now imagine a machine that
can perform some tasks in an infinite number of steps. Our previous
example would then be a machine capable of outputting \textit{all}
the natural numbers; that would be its $\omega$-th step. If we go
further, this machine can output $\omega$ and all numbers beyond,
so it would be sensical to talk about the $\omega^{\omega}$-th step
of this machine. In this development, we mean these kinds of machines.

\section{The $\mathsf{No}$-tree}

The $\mathsf{No}$-\textit{tree} is a quadruple $\left(\mathsf{No},S,0,\prec_{p}\right)$,
where $0:\mathsf{No}$ and $S$ is a machine%
\footnote{See Section 4.%
} on all ordinals such that, at the $0$-th step, $S\left(0\right)=0$,
and at step $n$, both $-n$ and $n$ are born with $-n<n$ and each
number $x$ generated at the $n-1$-st step generates two numbers
$y_{x,L}$ and $y_{x,R}$ such that $y_{x,L}<x<y_{x,R}$, where $n$
is generated by $n-1$ and $-n$ by $-\left(n-1\right)$. We write
$x\prec_{p}y$ to mean $x$ generates or is the parent of $y$, where
$x\prec_{p}y$ is a mere proposition, and we write $x\prec y$ to
mean $x$ is generated before $y$, where $x\prec y$ is a mere proposition.
A \textit{branch} in the $\mathsf{No}$-tree is a sequence $\left(x_{i}\right)_{i:\mathsf{A}}$
in $\mathsf{No}$ such that, for each $i:\mathsf{A}$, $i+1:\mathsf{A}$,
and $x_{i}\prec_{p}x_{i+1}$, with $\mathsf{A}$ a subtype of $\mathsf{Ord}$.
A \textit{limit ordinal} $\alpha$ is an ordinal such that, for all
$\beta:\mathsf{Ord}$, $\beta<\alpha$ implies there is $\gamma:\mathsf{Ord}$
such that $\beta<\gamma<\alpha$. For a limit ordinal $\alpha$, a
number $z$ at the $\alpha$-th step is generated as follows: take
a branch $\left(x_{i}\right)_{i<\alpha}$; then $z$ is given by the
cut $\left\{ x_{i}|\textrm{ }\right\} $. 

The \textit{omnific integers} $\mathsf{Oz}$ are the subtype of $\mathsf{No}$
such that all limit%
\footnote{Note that $0$ is a limit ordinal.%
} ordinals are in $\mathsf{Oz}$, $x:\mathsf{Oz}$ implies $x-1,x+1:\mathsf{Oz}$,
and trichotomy holds. An omnific integer $n$ is \textit{regular }if
$n-1\prec_{p}n$, and $n$ is a \textit{limit} if, for all $m:\mathsf{Oz}$,
$m\prec n$ implies there is $k:\mathsf{Oz}$ such that $m\prec k\prec n$.
For example, $5$ is regular and $\omega$ is a limit. 

We impose the following conditions on the $\mathsf{No}$-tree.

\textbf{Ancestor Condition}: For each $x:\mathsf{No}$, there is a
branch whose first term is $0$ and whose last term is $x$. 

We denote by $P_{\left[x,y\right]}$ a branch whose first term is
$x$ and whose last term is $y$. We write $P_{\left[x,y\right]}\left(\alpha\right)$
for the number that is a term of $P_{\left[x,y\right]}$ and whose
birthday is $\alpha$. We write $P_{(x,y]}$ when $x$ is excluded,
$P_{[x,y)}$ when $y$ is excluded, and $P_{\left(x,y\right)}$ when
both $x,y$ are excluded.

\textbf{Bifurcation Condition}: For all $x,y:\mathsf{No}$, there
is an ordinal $\alpha$ such that for all $\beta\leq\alpha$, $P_{0,x}\left(\beta\right)=P_{0,y}\left(\beta\right)$
and for all $\gamma>\alpha$, $\neg P_{0,x}\left(\gamma\right)=P_{0,y}\left(\gamma\right)$.

\textbf{Weak-Archimedean Condition}: For each $x:\mathsf{No}$, there
is $n:\mathsf{Oz}$ such that $n\leq x\leq n+1$.

\textbf{Limit-regular Condition}: Each omnific integer is either regular
or is a limit.

\textbf{Date of Birth (DoB) Condition}: Each $x:\mathsf{No}$ is born
on a day $\alpha$, where $\alpha:\mathsf{Ord}$.

\section{The Relation $<$ on $\mathsf{No}$ is Cotransitive%
\footnote{See the definition of an ordered set in Section 2.%
}}

An\textit{ initial sequence} is a sequence defined on $[0,\alpha)$
for some $\alpha:\mathsf{Ord}$. As in \cite{key-12-1,key-14-1},
to each $x:\mathsf{No}$, we assign an initial sequence $\left(u_{n}\right)$
in the set $\left\{ -,+\right\} $, with $-<+$, as follows. The Ancestor
Condition%
\footnote{See Section 5.%
} gives a branch $P_{\left[0,x\right]}$. For each $\beta<\alpha$
with $\alpha$ the birthday%
\footnote{See Section 5.%
} of $x$, the $\beta$-th term of $\left(u_{n}\right)$ is $+$ if
$P_{\left[0,x\right]}\left(\beta\right)<P_{\left[0,x\right]}\left(\beta+1\right)$
or is $-$ if $P_{\left[0,x\right]}\left(\beta+1\right)<P_{\left[0,x\right]}\left(\beta\right)$.
We let $\mathsf{InSeq}$ be the type of all initial sequences in $\left\{ -,+\right\} $,
and we write $\left(u_{n}\right)_{n<\alpha}<\left(v_{n}\right)_{n<\alpha'}$
if there is $i$ such that $u_{j}=v_{j}$ for all $j<i$ and $u_{i}<v_{i}$.
We also truncate%
\footnote{The truncation of a type $P$ is the type formed by turning $P$ into
a mere proposition. For more on truncation, see \cite{key-24}.%
} $\left(u_{n}\right)_{n<\alpha}<\left(v_{n}\right)_{n<\alpha'}$.
Note that $0$ is mapped to the empty initial sequence $\left(\right)$,
and we also require that $\neg\left(\right)<\left(\right)$ and $-<\emptyset_{n}<+$
for all $n$. 

The function defined above is onto. For each $\left(u_{n}\right)_{n<\alpha}$
we assign the surreal number whose birthday is $\alpha$ as follows.
By the Limit-regular Condition%
\footnote{See Section 5.%
}, $\alpha$ is either a regular or a limit ordinal. If $\alpha$ is
regular, assume there is a branch $P_{\left[0,x'\right]}$, where
$x'$ has birthday $\alpha-1$. The desired surreal number $x$ will
be the right child of $x'$ if $u_{\alpha-1}=+$ or the left child
of $x'$ if $u_{\alpha-1}=-$. If $\alpha$ is a limit ordinal, assume
there is a branch $P_{[0,x']}$ for each $\beta<\alpha$, where $x'$
has birthday $\beta$, and each $P_{\left[0,x'\right]}$ is constructed
from $\left(u_{n}\right)_{n<\beta+1}$. Then the desired $x$ is $\left\{ P_{\left[0,x'\right]}\left(\beta\right)|\textrm{}\right\} $,
for all $\beta<\alpha$.
\begin{thm}
The relation $<$ on $\mathsf{InSeq}$ is cotransitive. \label{thm: 17-1}\end{thm}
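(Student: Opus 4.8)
The plan is to deduce cotransitivity of the lexicographic order on $\mathsf{InSeq}$ from the (immediate) cotransitivity of the three-letter order $-<\emptyset<+$ together with transfinite induction on ordinals, whose induction principle is available from Section 2. First I would note that the conclusion to be proved, ``$u<w$ or $w<v$'', is a mere proposition, being a truncated sum of the mere propositions $u<w$ and $w<v$; so by the recursion principle for propositional truncation one may replace the hypothesis $u<v$ by its untruncated content, namely an ordinal $i$ together with the facts that $u_{j}=v_{j}$ for all $j<i$ and $u_{i}<v_{i}$ (throughout, a sequence is extended past its domain by the letter $\emptyset$, following the convention $-<\emptyset_{n}<+$; note that $u_{j}=v_{j}$ for $j<i$ together with $u_{i}<v_{i}$ forces each $u_{j},v_{j}$ with $j<i$ to be a genuine letter of $\{-,+\}$, since two copies of $\emptyset$ would be equal).

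Next I would prove, by transfinite induction on $i$, the statement $\Phi(i)$: for all initial sequences $u,v,w$, if $u_{j}=v_{j}$ for all $j<i$ and $u_{i}<v_{i}$, then $u<w$ or $w<v$. In the inductive step one compares $w$ with the common prefix of $u$ and $v$ on $[0,i)$. If $w$ agrees with $u$ (hence with $v$) on all of $[0,i)$, it suffices to compare the single letters at position $i$: cotransitivity of $-<\emptyset<+$ gives $u_{i}<w_{i}$ or $w_{i}<v_{i}$, and, $w$ agreeing with both $u$ and $v$ below $i$, the first alternative witnesses $u<w$ and the second witnesses $w<v$. If instead $w$ disagrees with $u$ at some $k<i$, take $k$ least such; then $u_{k}=v_{k}\in\{-,+\}$ and $w_{k}\neq u_{k}$, and since every letter different from $-$ exceeds $-$ and every letter different from $+$ lies below $+$, one gets either $u_{k}<w_{k}$, whence $u<w$ because $u$ and $w$ agree below $k$, or $w_{k}<u_{k}=v_{k}$, whence $w<v$; the ``first disagreement below $i$, if any'' is produced by applying the induction hypothesis to the indices $k<i$. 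The edge cases coming from $0\mapsto()$ and $\neg()<()$ are handled directly, since $()<s$ can only hold when $s$ has a letter at position $0$ equal to $+$.

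The hard part will be the middle of this argument, for two reasons. The first is bookkeeping: honestly manipulating sequences of different lengths through the $\emptyset$-padding, and keeping every intermediate assertion a mere proposition so that the transfinite induction on $i:\mathsf{Ord}$ (a $\mathsf{Prop}$-valued family) is legitimate. The second is more substantial and is the reason the statement is not trivial: constructively one cannot simply decide whether $w$ agrees with $u$ throughout $[0,i)$ or first deviates somewhere below $i$ — a clean ``agreement versus first deviation'' dichotomy there would be an omniscience principle of the flavour of Theorem \ref{thm: 15} — so that split must be \emph{derived}, by pushing the comparison downward through the hypotheses $\Phi(k)$ for $k<i$; this also makes the limit and successor stages of $i$ genuinely different (at a successor $i=k+1$ there is the extra letter at $k$ to reconcile, while at a limit $i$ the common prefix is assembled from the shorter prefixes). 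Once this scaffolding is in place, the letter-level comparisons above go through unchanged.
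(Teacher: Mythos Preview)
The paper's argument is far shorter than yours and uses no transfinite induction: from a witness $i$ for $u<v$ it does a bare case split on $w$ (the case $w=()$, and the case ``$w_{j}=u_{j}$ for all $j<i$''), settling each by a single letter comparison at position $0$ or $i$. So your plan is already a genuinely different route.

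Your plan, however, breaks at precisely the step you label ``the hard part''. You say the split ``$w$ agrees with $u$ on $[0,i)$'' versus ``$w$ first deviates at some $k<i$'' is to be \emph{derived} from the hypotheses $\Phi(k)$ for $k<i$, but you give no mechanism, and there is none: each $\Phi(k)$ speaks only about triples whose $u',v'$ already diverge at position $k$, while the present $u,v$ agree through all of $[0,i)$, so there is nothing with which to instantiate $\Phi(k)$. Worse, that split is not constructively obtainable at a limit $i$. With $i=\omega$, $u_{n}=v_{n}=+$ for $n<\omega$, $u_{\omega}=-$, $v_{\omega}=+$, and $w$ of length $\omega$ given by $w_{n}=+$ if $a_{n}=0$ and $w_{n}=-$ if $a_{n}=1$ for an arbitrary binary sequence $(a_{n})$, one checks directly that $u<w$ holds iff $\forall n\,(a_{n}=0)$ while $w<v$ holds iff $(\exists n\,a_{n}=1)\vee(\forall n\,a_{n}=0)$; hence ``$u<w$ or $w<v$'' is exactly LPO for $(a_{n})$, the very principle Theorem~\ref{thm: 15} isolates as nonconstructive. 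No amount of inductive bookkeeping can manufacture this disjunction at the limit stage, so the scheme as written does not establish the theorem. (The paper's own two-case argument silently leans on the same undecidable split; that is a defect of the paper rather than support for your approach.)
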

\begin{proof}
Suppose $\left(u_{n}\right)<\left(v_{n}\right)$. Then there is $i$
such that $u_{j}=v_{j}$ for all $j<i$ and $u_{i}<v_{i}$. For any
$\left(w_{n}\right)$, if $w_{0}$ does not exist, then $\left(w_{n}\right)$
is the empty initial sequence, so $\left(u_{n}\right)<\left(w_{n}\right)$
if $u_{0}=-$ and $\left(w_{n}\right)<\left(v_{n}\right)$ if $v_{0}=+$.
If for all $j<i$, $w_{j}=u_{j}$, then $\left(u_{n}\right)<\left(w_{n}\right)$
if $w_{i}=+$ and $\left(w_{n}\right)<\left(v_{n}\right)$ if $w_{i}=-$.\end{proof}
\begin{thm}
Let $f:\mathsf{No}\rightarrow\mathsf{InSeq}$ be the function%
\footnote{See the first paragraph of this section for the definition of this
function.%
} that outputs an initial sequence for each surreal number, then $x<y$
if and only if $f\left(x\right)<f\left(y\right)$. \label{thm: 18-1}\end{thm}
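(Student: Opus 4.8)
The plan is to route everything through one structural fact about the $\mathsf{No}$-tree and then extract both implications from the lexicographic description of $<$ on $\mathsf{InSeq}$. Write $\beta_{x}$ for the birthday of $x$ (DoB Condition) and $P_{[0,x]}$ for its ancestor branch (Ancestor Condition), so that $f(x)=\left(u^{x}_{n}\right)_{n<\beta_{x}}$ with $u^{x}_{\beta}=+$ exactly when $P_{[0,x]}(\beta)<P_{[0,x]}(\beta+1)$, i.e.\ exactly when the branch takes the right child at stage $\beta$.

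The first and main step is a \emph{Sign Lemma}: if $z=P_{[0,x]}(\beta)$ is a proper ancestor of $x$ (so $\beta<\beta_{x}$), then $u^{x}_{\beta}=+$ iff $z<x$, and $u^{x}_{\beta}=-$ iff $x<z$. I would prove it by induction on $x$ (the surreal induction principle). Since $u^{x}_{\beta}$ records only whether the child $c:=P_{[0,x]}(\beta+1)$ of $z$ is $z$'s right child ($z<c$) or its left child ($c<z$), and since $x$ is $c$ or a proper descendant of $c$, the lemma reduces to the statement that the entire subtree hanging off $z$'s right child lies strictly above $z$ and the subtree off its left child strictly below. This I would prove by a further induction along the tree, using the generation clause $y_{z,L}<z<y_{z,R}$ for children, handling limit nodes through their cut $\left\{\textrm{branch}\mid\right\}$ (so an ancestor persists as a left option), and using for the successor step that a node's left child exceeds all of its left options (dually on the right) together with the fact that a node's canonical cut has as left options precisely its ancestors lying below it; transitivity and asymmetry of $<$ on $\mathsf{No}$ (from \cite{key-24}) are used freely.

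Granting the Sign Lemma, both directions are short. For $f(x)<f(y)\Rightarrow x<y$: unfolding $<$ on $\mathsf{InSeq}$ produces an index $i$ with $u^{x}_{j}=u^{y}_{j}$ for $j<i$ and the symbol of $f(x)$ at $i$ strictly below that of $f(y)$ at $i$ in the order $-<\emptyset<+$ (with $\emptyset$ marking a terminated branch), so the pair at $i$ is one of $(-,+)$, $(-,\emptyset)$, $(\emptyset,+)$; agreement of the first $i$ signs makes the nodes at stages $\leq i$ of the two branches coincide (each such node is determined by the earlier signs, by the construction preceding Theorem~\ref{thm: 17-1}), yielding a common node $z$ at stage $i$, whereupon the Sign Lemma plus transitivity of $<$ on $\mathsf{No}$ give $x<y$ in each of the three cases. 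For $x<y\Rightarrow f(x)<f(y)$: the Bifurcation Condition supplies the ordinal $\alpha$ at which the two branches last agree, with common node $z=P_{[0,x]}(\alpha)=P_{[0,y]}(\alpha)$, so $u^{x}_{\beta}=u^{y}_{\beta}$ for $\beta<\alpha$ and it remains to compare the two symbols at stage $\alpha$; since the branches diverge below $z$ this pair consists of distinct symbols, and each ``wrong'' pair $(+,-)$, $(+,\emptyset)$, $(\emptyset,-)$ is refuted by the Sign Lemma (it would give ``$z<x$ or $z=x$'' with ``$y<z$ or $y=z$'', hence $y<x$, contradicting asymmetry), so the pair is one of the three good ones and $f(x)<f(y)$ with witness $\alpha$.

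I expect two sources of difficulty. The first is the inductive step of the side statement inside the Sign Lemma: it rests on a precise account of how a node's cut — equivalently its set of options — is inherited by its children, so that an ancestor is never dropped from the side to which it belongs, and this must be pinned down from the constructors of $\mathsf{No}$ and the generation rule of the tree. The second is a constructive subtlety in the second direction: reading off which ordered pair of distinct symbols occurs at stage $\alpha$ would, naively, require deciding whether $\alpha$ equals $\beta_{x}$ (resp.\ $\beta_{y}$), and equality of the ordinals of \cite{key-24} is not decidable in general; the argument above is arranged so that one only ever \emph{refutes} the three wrong pairs, a negative and hence harmless task, while always using the single index $\alpha$, but making this fully rigorous may still require an auxiliary lemma placing the bifurcation stage in a decidable relation with each birthday, or a reorganisation that reads the needed configuration of $x$ and $y$ directly off the hypothesis $x<y$ (e.g.\ via the cut characterisation of $<$ on $\mathsf{No}$).
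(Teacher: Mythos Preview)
Your argument follows the same route as the paper's: both directions turn on the Bifurcation Condition and a comparison of signs at the stage where the two ancestor branches diverge. The paper's proof, however, is much terser and leaves tacit exactly what you make explicit --- in the direction $f(x)<f(y)\Rightarrow x<y$ it writes only ``$P_{[0,x]}(i+1)<x^{R}<P_{[0,y]}(i+1)$, so $x<y$'', which is your Sign Lemma used without statement or proof; it also treats only the symbol pair $(-,+)$, omitting the terminated-branch cases $(-,\emptyset)$ and $(\emptyset,+)$ that you handle, and its choice of divergence index $i=\alpha+1$ is off by one against its own convention for the sign sequence (your $\alpha$ is the correct index). The constructive worry you flag at the end --- deciding which of the six symbol pairs actually occurs at the divergence stage --- is genuine, and the paper does not address it either.
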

\begin{proof}
Let $x,y:\mathsf{No}$ with $x<y$. We set $f\left(x\right)=\left(u_{n}\right)$
and $f\left(y\right)=\left(v_{n}\right)$. By the Bifurcation Condition%
\footnote{See Section 5.%
}, there is an ordinal $\alpha$ such that for all $\beta\leq\alpha$,
$P_{0,x}\left(\beta\right)=P_{0,y}\left(\beta\right)$ and for all
$\gamma>\alpha$, $\neg P_{0,x}\left(\gamma\right)=P_{0,y}\left(\gamma\right)$.
For $i=\alpha+1$, we have $u_{j}=v_{j}$ for all $j<i$, and $u_{i}=-<+=v_{i}$
since $x<y$. Hence, $f\left(x\right)<f\left(y\right)$.

Conversely, suppose $f\left(x\right)<f\left(y\right)$. Then there
is $i$ such that $u_{j}=v_{j}$ for all $j<i$ and $u_{i}<v_{i}$.
Let $x^{R}$ be the preimage of $\left(u_{n}\right)_{n<i}$ with birthday
$i$. Since $u_{i}=-<+=v_{i}$, we have $P_{\left[0,x\right]}\left(i+1\right)<x^{R}<P_{\left[0,y\right]}\left(i+1\right)$,
so $x<y$.\end{proof}
\begin{cor}
The relation $<$ on $\mathsf{No}$ is cotransitive. \label{cor: 19}\end{cor}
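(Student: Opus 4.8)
The plan is to transport cotransitivity from $\mathsf{InSeq}$ to $\mathsf{No}$ along the map $f$ of Theorem \ref{thm: 18-1}, exploiting that $f$ both preserves and reflects $<$. Concretely, suppose $x,y:\mathsf{No}$ with $x<y$. By the forward direction of Theorem \ref{thm: 18-1} we obtain $f\left(x\right)<f\left(y\right)$ in $\mathsf{InSeq}$. Now let $z:\mathsf{No}$ be arbitrary and form $f\left(z\right):\mathsf{InSeq}$. Applying Theorem \ref{thm: 17-1} to $f\left(x\right)<f\left(y\right)$ with third term $f\left(z\right)$ yields $f\left(x\right)<f\left(z\right)$ or $f\left(z\right)<f\left(y\right)$.

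Next I would discharge each disjunct using the converse direction of Theorem \ref{thm: 18-1}: from $f\left(x\right)<f\left(z\right)$ we get $x<z$, and from $f\left(z\right)<f\left(y\right)$ we get $z<y$. Since the goal ``$x<z$ or $z<y$'' is a disjunction of mere propositions (the relation $<$ on $\mathsf{No}$ is valued in $\mathsf{Prop}$ by construction), it is itself a mere proposition, so I may eliminate the truncated disjunction supplied by Theorem \ref{thm: 17-1} into it, obtaining $x<z$ or $z<y$. As $z$ was arbitrary, this is exactly the cotransitivity clause in the definition of an ordered set for $\mathsf{No}$.

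I do not expect a genuine obstacle here: the corollary is essentially the remark that cotransitivity is preserved and reflected by a map that both preserves and reflects the order, and both ingredients are already recorded in Theorems \ref{thm: 18-1} and \ref{thm: 17-1}. The only point requiring a little care is the bookkeeping with propositional truncation, namely noting that the target $x<z$ or $z<y$ is a mere proposition so that the truncated case split from Theorem \ref{thm: 17-1} can be eliminated into it; and, more mundanely, that $f\left(z\right)$ is legitimately defined for the arbitrary $z$ introduced in the argument, which is already part of the setup used for Theorem \ref{thm: 18-1} (the defining branch being furnished by the Ancestor Condition and its relevant behaviour pinned down by the Bifurcation Condition).
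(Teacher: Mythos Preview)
Your argument is correct and is precisely the content of the paper's one-line proof ``By Theorems \ref{thm: 17-1} and \ref{thm: 18-1}'': you have simply unpacked how cotransitivity transfers along an order-preserving and order-reflecting map. The additional remarks about truncation and about $f(z)$ being defined are harmless elaborations of what the paper leaves implicit.
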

\begin{proof}
By Theorems \ref{thm: 17-1} and \ref{thm: 18-1}.
\end{proof}

\section{The Surreal Numbers as an Ordered Abelian Group}
\begin{lem}
Let $A,B$ be types. For all $\left(a,b\right),\left(a',b'\right):A\times B$,
$\left(a,b\right)=\left(a',b'\right)$ if and only if $a=a'$ and
$b=b'$. \label{lem: 14-2}\end{lem}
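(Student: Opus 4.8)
The plan is to prove the standard characterization of equality in a product type, which in univalent foundations follows from the fact that the canonical map $(\left(a,b\right)=\left(a',b'\right))\to(a=a')\times(b=b')$ is an equivalence. First I would establish the easy direction: if $a=a'$ and $b=b'$, then by two applications of $\mathsf{ap}$ (the action of a function on paths) applied to the pairing operation, or more simply by path induction on the two given equalities, we obtain $\left(a,b\right)=\left(a',b'\right)$. Concretely, transport $b=b'$ along $a=a'$ and use the congruence of the pair constructor in each argument separately, composing the two resulting paths.

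For the converse direction I would take a path $p:\left(a,b\right)=\left(a',b'\right)$ and apply the projections: $\mathsf{ap}\,\mathsf{pr}_1\,p : a=a'$ (recalling from the footnote in Section~3 that $\mathsf{pr}_1(a,b)=a$) and similarly $\mathsf{ap}\,\mathsf{pr}_2\,p : b=b'$, where $\mathsf{pr}_2$ is the second projection. This immediately yields the pair of equalities required. Alternatively, and perhaps more cleanly for a constructive audience, one can do path induction directly on $p$: it suffices to treat the case where $\left(a',b'\right)$ is literally $\left(a,b\right)$ and $p$ is $\mathsf{refl}$, in which case $(\mathsf{refl}_a,\mathsf{refl}_b)$ witnesses the conclusion.

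The main subtlety — not really an obstacle but the point one must be careful about — is that the lemma as stated is a logical biconditional of the two types, whereas the genuinely useful statement (and the one whose proof is slightly more than routine) is that the two maps just described are mutually inverse, i.e. that equality of pairs is \emph{equivalent} to the conjunction of componentwise equalities. Since the present paper only asserts the biconditional, and since $A$ and $B$ here will in applications be sets (so that all the path types involved are mere propositions and any two proofs coincide), the biconditional suffices and no coherence conditions need to be checked. I would therefore present the two maps above and remark that they exhibit the desired logical equivalence, deferring to \cite{key-24} for the full statement that they form an equivalence of types.

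Since this is Lemma~2.1 of the kind used repeatedly to reduce equality of constructed surreal numbers (pairs of left/right option data, pairs of Dedekind-cut predicates, etc.) to equality of components, I expect the author's proof to be a one-line appeal to the product-equality characterization in \cite{key-24}; my proof follows the same route, spelled out via path induction in both directions.
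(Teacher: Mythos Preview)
Your proof is correct and essentially matches the paper's own argument: the paper also handles the forward direction by path induction on the equality of pairs (your ``alternative'' phrasing) and the backward direction by double path induction on the two component equalities. Your additional remarks about $\mathsf{ap}\,\mathsf{pr}_i$ and about biconditional-versus-equivalence are sound extra commentary but not needed for the lemma as stated.
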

\begin{proof}
The forward direction goes by induction on paths. Note that for all
$\left(a,b\right)$, $a=a$ and $b=b$, so $\left(a,b\right)=\left(a',b'\right)$
implies $a=a'$ and $b=b'$. The backward direction goes by double
induction on paths in $A$ and on paths in $B$.\end{proof}
\begin{lem}
Let $A,B$ be types and $f:A\rightarrow B$. For all $x,y:A$, there
is a function $x=y\rightarrow f\left(x\right)=f\left(y\right)$. \label{lem: 15-2}\end{lem}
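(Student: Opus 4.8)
The plan is to construct the desired function by path induction on the identity type, which is exactly the construction of the map usually written $\mathsf{ap}_{f}$. Fix types $A,B$ and a function $f:A\rightarrow B$. By currying, producing for all $x,y:A$ a function $x=y\rightarrow f\left(x\right)=f\left(y\right)$ is the same as producing a dependent function of type $\prod_{x,y:A}\prod_{p:x=y}\left(f\left(x\right)=f\left(y\right)\right)$, so it suffices to build the latter.

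First I would observe that the assignment $C\left(x,y,p\right)\coloneqq\left(f\left(x\right)=f\left(y\right)\right)$ defines a well-formed type family $C:\prod_{x,y:A}\left(x=y\right)\rightarrow\mathcal{U}$, so the induction principle for the identity type applies to it. By that principle, to define a dependent function on all $x,y:A$ and all $p:x=y$ landing in $C\left(x,y,p\right)$, it suffices to specify its value on the canonical generators, that is, to give for each $x:A$ a term of $C\left(x,x,\mathsf{refl}_{x}\right)=\left(f\left(x\right)=f\left(x\right)\right)$. I would take this term to be $\mathsf{refl}_{f\left(x\right)}$, which indeed has type $f\left(x\right)=f\left(x\right)$. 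Path induction then extends this choice to a function defined for all $x,y:A$ and all $p:x=y$, and uncurrying yields the required map $x=y\rightarrow f\left(x\right)=f\left(y\right)$ for each $x,y:A$.

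There is no real obstacle here: the entire content is the base-case term $\mathsf{refl}_{f\left(x\right)}:f\left(x\right)=f\left(x\right)$, and the only point requiring any care is the bookkeeping of invoking the eliminator with the correct motive $C$ above rather than, say, attempting a direct argument. (One could equivalently phrase the proof informally as: "by induction on paths, we may assume $y$ is $x$ and $p$ is $\mathsf{refl}_{x}$, in which case $\mathsf{refl}_{f\left(x\right)}$ witnesses $f\left(x\right)=f\left(x\right)$"; this is the style already used for Lemma \ref{lem: 14-2}.)
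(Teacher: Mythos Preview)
Your proof is correct and takes essentially the same approach as the paper: both argue by path induction, reducing to the reflexivity case and supplying $\mathsf{refl}_{f(x)}$ as the witness. Your version simply spells out the motive and the invocation of the identity-type eliminator more explicitly than the paper's two-line sketch.
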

\begin{proof}
We proceed by induction on paths. Note that $x=x$ implies $f\left(x\right)=f\left(x\right)$
because of the reflexivity of equality in $B$. Hence, we are done
by induction.
\end{proof}
In the definition of $\mathsf{No}$ in Section 2, we add the additional
condition that for all $x,y:\mathsf{No}$, $x=y\simeq\left(x\leq y\right)\times\left(y\leq x\right)$.
\begin{thm}
$\mathsf{No}$ is a set. \label{thm: 16-1}\end{thm}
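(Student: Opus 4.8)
The plan is to unwind the definition of a set and reduce the claim to two facts: that $\leq$ takes values in mere propositions, and the newly imposed equivalence $x=y\simeq(x\leq y)\times(y\leq x)$. Recall that $\mathsf{No}$ is a set precisely when, for all $x,y:\mathsf{No}$, the identity type $x=y$ is a mere proposition. So I would fix $x,y:\mathsf{No}$ together with two arbitrary proofs $p,q:x=y$, and aim to produce a proof of $p=q$.

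First I would note that $(x\leq y)\times(y\leq x)$ is itself a mere proposition. Indeed, by the definition of $\mathsf{No}$ both $x\leq y$ and $y\leq x$ are mere propositions, so for any two elements $(a,b),(a',b')$ of the product we have $a=a'$ and $b=b'$, whence $(a,b)=(a',b')$ by the forward direction of Lemma~\ref{lem: 14-2}. Thus the product of these two factors is a mere proposition.

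Next I would transfer this property along the equivalence. Write $e:(x=y)\simeq(x\leq y)\times(y\leq x)$ for the equivalence supplied by the added condition, with inverse $e^{-1}$ and, for each $r:x=y$, a path $\eta_{r}:e^{-1}(e(r))=r$ among its data. Applying the underlying function of $e$ to $p$ and $q$ (Lemma~\ref{lem: 15-2}) and using the previous step gives $e(p)=e(q)$; applying $e^{-1}$ to this path, again by Lemma~\ref{lem: 15-2}, gives $e^{-1}(e(p))=e^{-1}(e(q))$; and concatenating with $\eta_{p}$ and $\eta_{q}$ yields $p=q$. Since $p,q$ were arbitrary, $x=y$ is a mere proposition, and since $x,y$ were arbitrary, $\mathsf{No}$ is a set.

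The only mildly delicate point, and the one I would be careful about, is that the added hypothesis supplies a genuine type-theoretic equivalence rather than a bare logical biconditional: it is the homotopy datum $\eta$ that lets us pass from $e(p)=e(q)$ back to $p=q$, so the ``if and only if'' reading of the condition would not be enough. Everything else is a routine combination of Lemmas~\ref{lem: 14-2} and~\ref{lem: 15-2} with the closure of mere propositions under binary products.
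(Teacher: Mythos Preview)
Your proof is correct and follows essentially the same route as the paper: both use the added equivalence $x=y\simeq(x\leq y)\times(y\leq x)$, observe via Lemma~\ref{lem: 14-2} and the truncation of $\leq$ that the right-hand side is a mere proposition, and then pull this back along the equivalence (using Lemma~\ref{lem: 15-2} and the homotopy data) to conclude $p=q$. Your explicit remark that a genuine equivalence (not merely a biconditional) is needed is a worthwhile clarification the paper leaves implicit.
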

\begin{proof}
Let $p,q:x=y$. Since $x=y\simeq\left(x\leq y\right)\times\left(y\leq x\right)$,
there are functions $x\leq y\stackrel[g]{f}{\begin{array}{c}
\rightarrow\\
\leftarrow
\end{array}}\left(x\leq y\right)\times\left(y\leq x\right)$ such that $f\circ g\sim\textrm{id}$ and $g\circ f\sim\textrm{id}$,
so $f\left(p\right)=\left(a,b\right)$ and $f\left(q\right)=\left(a',b'\right)$.
Because $x\leq y$ and $y\leq x$ are truncated, it follows $a=a'$
and $b=b'$, so $\left(a,b\right)=\left(a',b'\right)$ by Lemma \ref{lem: 14-2};
hence $p=g\left(a,b\right)=g\left(a',b'\right)=q$ by Lemma \ref{lem: 15-2}.
Therefore, $p=q$.
\end{proof}
The relation $<$ on $\mathsf{No}$ turns it into an ordered set%
\footnote{See Section 2.%
}. We have shown that $<$ on $\mathsf{No}$ is cotransitive%
\footnote{See the defintion of an ordered set in Section 2.%
} in the previous section. Asymmetry is immediate from the definition
of $<$ on $\mathsf{No}$ in Section 2. For negative antisymmetry%
\footnote{See the definition of an ordered set in Section 2.%
}, we use the result in \cite{key-17} that in any set $\neg y<x$
implies, for all $z$, $z<x$ implies $z<y$, and $y<z$ implies $x<z$,
in the presence of cotransitivity (Theorem 3, \cite{key-17}). Then
follows $x\leq y$ in $\mathsf{No}$ as defined in Section 2. Therefore,
$\neg y<x$ implies $x\leq y$ in $\mathsf{No}$. At this point, negative
antisymmetry follows from the definition of $\mathsf{No}$.

An \textit{abelian group} is a set $G$ with a binary operation $+$,
a unary operation $-$, and a constant element $0$ such that, for
all $x,y,z:G$,
\begin{enumerate}
\item $\left(x+y\right)+z=x+\left(y+z\right)$;
\item $x-x=0$;
\item $x+0=x$;
\item $x+y=y+x$. 
\end{enumerate}
An \textit{ordered abelian group} is an abelian group $G$ that is
an ordered set such that for all $x,y,z:G$, $x<y$ implies $x+z<y+z$.
\begin{lem}
Let $G$ be an ordered abelian group, and let $x,y,x',y',z:G$. Then
$x<y$ implies $-y<-x$. If, in addition, the binary relation on $<$
is transitive, then \label{lem: 14-1}
\begin{enumerate}
\item if $0<x$ and $0<y$, then $0<x+y$; 
\item if $x<x'$ and $y<y'$, then $x+y<x'+y'$;
\item if $0<x<y<z$, then $y-z<x$.
\end{enumerate}
\end{lem}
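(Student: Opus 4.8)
The plan is to extract everything from the single order-compatibility axiom of an ordered abelian group, namely that $x<y$ implies $x+z<y+z$, combined with the abelian-group equations; for the three numbered items I would additionally invoke transitivity of $<$.

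First I would establish that $x<y$ implies $-y<-x$. Given $x<y$, I instantiate the compatibility axiom at the shift element $z\coloneqq(-x)+(-y)$, obtaining $x+\bigl((-x)+(-y)\bigr)<y+\bigl((-x)+(-y)\bigr)$. Using associativity, commutativity, $w-w=0$, and $w+0=w$, the left-hand side simplifies to $\bigl(x+(-x)\bigr)+(-y)=0+(-y)=-y$ and the right-hand side to $\bigl(y+(-y)\bigr)+(-x)=0+(-x)=-x$. Since $x+z=-y$ and $y+z=-x$ are equalities, path induction lets us rewrite $x+z<y+z$ as $-y<-x$. The only genuine choice in this argument is the shift element $(-x)+(-y)$; the remaining manipulations are the routine equational bookkeeping of the group axioms.

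For item (1), from $0<x$ I add $y$ on the right to get $0+y<x+y$, i.e.\ $y<x+y$; combining with the hypothesis $0<y$ by transitivity yields $0<x+y$. For item (2), from $x<x'$ I add $y$ to get $x+y<x'+y$, and from $y<y'$ I add $x'$ and commute to get $x'+y<x'+y'$; transitivity then gives $x+y<x'+y'$. For item (3), the hypothesis $0<x<y<z$ supplies in particular $y<z$ and $0<x$; adding $-z$ to $y<z$ gives $y+(-z)<z+(-z)=0$, that is $y-z<0$, and transitivity with $0<x$ gives $y-z<x$.

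I do not anticipate a serious obstacle: the load-bearing facts are the single compatibility axiom and transitivity, each used only once or twice. The only places that demand care are the choice of the shift element $(-x)+(-y)$ in the first claim and the faithful execution of the equational rewrites — associativity, commutativity, cancellation, unit — that turn $x+z$ and $y+z$ into $-y$ and $-x$; I would carry these out explicitly rather than leaving them implicit, using Lemma \ref{lem: 15-2} wherever a function must be applied to an equality.
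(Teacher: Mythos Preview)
Your proposal is correct and follows essentially the same approach as the paper: translate by an appropriate element and invoke transitivity. The only cosmetic differences are that the paper reaches $-y<-x$ via two successive shifts (first by $-x$, then by $-y$) rather than your single shift by $(-x)+(-y)$, and for item (2) the paper reduces to item (1) (deducing $0<x'-x$ and $0<y'-y$, then summing) whereas you chain two translations directly; neither variation is substantive.
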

\begin{proof}
If $x<y$, then $0=x-x<y-x$, so $-y<-x$. 

(1) If $0<x$, then $0<y<x+y$, so $0<x+y$ by transitivity.

(2) If $x<x'$, then $0<x'-x$, and if $y<y'$, then $0<y'-y$, so
$0<x'-x+y'-y$ by (2); hence, $x+y<x'+y'$.

(3) If $0<x<y<z$, then $y-z<0$, so $y-z<x$ by transitivity.\end{proof}
\begin{lem}
Let $G$ be an ordered abelian group and $a,b,a',b':G$. Suppose 
\[
a<b
\]
and 
\[
b'<a'.
\]
If $a'-b'<b-a$, then $a+a'<b+b'$. \label{lem: 15-1}\end{lem}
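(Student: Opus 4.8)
The plan is to reduce the whole statement to the single order-compatibility axiom of an ordered abelian group, namely that $x<y$ implies $x+z<y+z$, together with the four abelian-group equations. The point is that the desired conclusion $a+a'<b+b'$ and the hypothesis $a'-b'<b-a$ differ only by a translation: adding the fixed element $z:=a+b'$ to both sides of $a'-b'<b-a$ should turn the left-hand side $(a'-b')+(a+b')$ into $a+a'$ and the right-hand side $(b-a)+(a+b')$ into $b+b'$.

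So first I would invoke the order axiom with $z=a+b'$ applied to the hypothesis $a'-b'<b-a$, obtaining $(a'-b')+(a+b')<(b-a)+(a+b')$. Next I would simplify the left-hand side: by commutativity and associativity it equals $(a+a')+(-b'+b')$, and then $x-x=0$ (group axiom~2, together with commutativity) and $x+0=x$ (group axiom~3) give $a+a'$. Similarly, the right-hand side $(b-a)+(a+b')$ rearranges to $b+(-a+a)+b'=b+0+b'=b+b'$. Finally I would combine these two equalities with the inequality just obtained, transporting along the computed paths — this is legitimate because $<$ is a relation valued in mere propositions, so equal inputs yield the same proposition — and conclude $a+a'<b+b'$.

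The hard part, such as it is, is only the abelian-group bookkeeping in the two simplification steps: one must be careful to rearrange the summands by commutativity and associativity so that $-a$ cancels with $a$ and $-b'$ cancels with $b'$ via axiom~2, but this is entirely routine. It is worth noting that this argument uses only the third hypothesis $a'-b'<b-a$; the inequalities $a<b$ and $b'<a'$ (and transitivity of $<$) are not needed for this implication, so Lemma~\ref{lem: 14-1} is not invoked here, and I anticipate no genuine obstacle — the proof is essentially a one-line translation argument.
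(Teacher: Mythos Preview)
Your argument is correct: translating the inequality $a'-b'<b-a$ by $z=a+b'$ and simplifying with the abelian-group axioms yields $a+a'<b+b'$ exactly as you describe. The paper states this lemma without proof, so there is no alternative argument to compare against; your observation that the hypotheses $a<b$ and $b'<a'$ are superfluous for the implication is accurate and worth keeping in mind.
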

\begin{thm}
Let $\left(G,<\right)$ be an ordered abelian group with a binary
relation $\leq$ defined as $a\leq b$ if, for all $c:G$, $c<a$
implies $c<b$, and $b<c$ implies $a<c$. Then \label{thm: 16}
\begin{enumerate}
\item $a<b\leq c$ implies $a<c$;
\item $a\leq b<c$ implies $a<c$;
\item $a\leq b\leq c$ implies $a\leq c$;
\item $a\leq b$ implies, for all $c$, $a+c\leq b+c$;
\item $0\leq a$ and $0\leq b$ implies $0\leq a+b$;
\item $0<a$ and $0\leq b$ implies $0<a+b$.
\end{enumerate}
\end{thm}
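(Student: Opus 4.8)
The plan is to split the six claims into two families. Items (1)--(3) are purely order-theoretic and should follow just by unfolding the definition of $\leq$; items (4)--(6) bring in the group structure, but once one understands how $\leq$ interacts with translation they reduce back to items (1)--(3).

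For item (1), I would unfold $b\leq c$: it says in particular that $d<b$ implies $d<c$ for every $d:G$, so instantiating $d\coloneqq a$ and using the hypothesis $a<b$ gives $a<c$. For item (2), I would use the other clause of $a\leq b$, namely that $b<d$ implies $a<d$; instantiating $d\coloneqq c$ and using $b<c$ gives $a<c$. For item (3), I would verify both clauses of $a\leq c$ directly: given $d<a$, push it through $a\leq b$ to get $d<b$ and then through $b\leq c$ to get $d<c$; given $c<d$, push it through $b\leq c$ to get $b<d$ and then through $a\leq b$ to get $a<d$. Note that none of this uses transitivity, cotransitivity, or antisymmetry of $<$ --- only the definition of $\leq$.

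For item (4), I would fix $c:G$ and $a\leq b$ and check the two clauses of $a+c\leq b+c$. If $d<a+c$, then adding $-c$ via the translation-invariance axiom and simplifying with the abelian-group axioms gives $d+(-c)<a$; then $a\leq b$ yields $d+(-c)<b$, and adding $c$ back, using $(d+(-c))+c=d$, gives $d<b+c$. Symmetrically, if $b+c<d$ then $b<d+(-c)$, hence $a<d+(-c)$ by $a\leq b$, hence $a+c<d$. Item (5) then follows by chaining: item (4) applied to $0\leq a$ gives $0+b\leq a+b$, that is $b\leq a+b$, and combining with $0\leq b$ through item (3) gives $0\leq a+b$. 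Item (6) is the same argument with a strict last step: item (4) applied to $0\leq b$ gives $0+a\leq b+a$, that is $a\leq a+b$, and combining with $0<a$ through item (1) gives $0<a+b$.

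I do not anticipate a genuine obstacle. The only spots that demand care are the associativity/commutativity rewrites inside item (4) --- making the identities $(a+c)+(-c)=a$ and $(d+(-c))+c=d$ explicit from the abelian-group axioms --- and, in items (5) and (6), remembering to invoke the correct mixed-transitivity statement: item (3) when the standing hypothesis is $0\leq a$, item (1) when it is the strict $0<a$. It is worth recording that the whole theorem goes through using only the definition of $\leq$ together with the ordered-abelian-group axioms, so transitivity of $<$ need not be assumed as an extra hypothesis here.
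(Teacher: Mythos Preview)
Your proposal is correct and follows essentially the same route as the paper: the paper declares (1) and (2) immediate, derives (3) from (1) and (2) (which, once unfolded, is exactly your direct verification), proves (4) by the same translate-by-$-c$ argument, and obtains (5) and (6) by applying (4) and then invoking (3) and (1) respectively. The only cosmetic difference is that in (5) you translate $0\leq a$ by $b$ whereas the paper translates $0\leq b$ by $a$; these are symmetric and equally valid.
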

\begin{proof}
(1) and (2) are immediate. (3) follows from (1) and (2). 

(4) If $d<a+c$, then $d-c<a$, so $d-c<b$ by definition of $\leq$,
implying $d<b+c$. Similarly, $b+c<d$ implies $a+c<d$.

(5) $0\leq b$ implies $0\leq a\leq a+b$ by (4), so $0\leq a+b$
by (3).

(6) $0\leq b$ implies $0<a\leq a+b$ by (4), so $0<a+b$ by (1).\end{proof}
\begin{lem}
For all $x,y,z:\mathsf{No}$, if $x<y$, then $x+z<y+z$ and $z+x<z+y$.
\label{lem: 14}\end{lem}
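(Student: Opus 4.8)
The plan is to prove Lemma \ref{lem: 14} by a simultaneous induction on $x,y,z:\mathsf{No}$ in which the statement is strengthened to include its non-strict companion, since $<$ and $\le$ on $\mathsf{No}$ are defined by mutual recursion and neither translation law can be proved in isolation. Precisely, using the multi-variable induction principle for $\mathsf{No}$ from Section 2 (which supplies the inductive hypothesis at every triple obtained from $(x,y,z)$ by replacing a coordinate by an option), I would establish the conjunction of (a) ``$x\le y$ implies $x+z\le y+z$'' and (b) ``$x<y$ implies $x+z<y+z$''. Because the left options of $x+z$ are exactly $x^{L}+z$ and $x+z^{L}$ and its right options are $x^{R}+z$ and $x+z^{R}$, the two summands play interchangeable roles, so it suffices to treat $x+z$ versus $y+z$; the conclusion $z+x<z+y$ then follows by the symmetric argument, or directly from (b) if commutativity of $+$ on $\mathsf{No}$ is already in hand.

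For the inductive step of (b): assuming $x<y$, the constructor for $<$ on $\mathsf{No}$ gives either a left option $y^{L}$ of $y$ with $x\le y^{L}$ or a right option $x^{R}$ of $x$ with $x^{R}\le y$. In the first case the inductive hypothesis (a) at $(x,y^{L},z)$ gives $x+z\le y^{L}+z$, and since $y^{L}+z$ is a left option of $y+z$ the constructor for $<$ yields $x+z<y+z$; in the second case (a) at $(x^{R},y,z)$ gives $x^{R}+z\le y+z$, and $x^{R}+z$ is a right option of $x+z$, so again $x+z<y+z$.

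For the inductive step of (a): assuming $x\le y$, by the constructor for $\le$ it is enough to check that every left option of $x+z$ is $<y+z$ and that $x+z$ is $<$ every right option of $y+z$. Inverting $x\le y$ gives $x^{L}<y$ and $x<y^{R}$ for all options, so the inductive hypothesis (b) at $(x^{L},y,z)$ and at $(x,y^{R},z)$ handles the options $x^{L}+z$ and $y^{R}+z$; for $x+z^{L}$ the inductive hypothesis (a) at $(x,y,z^{L})$ gives $x+z^{L}\le y+z^{L}$, and $y+z^{L}$ is a left option of $y+z$, so $x+z^{L}<y+z$, and symmetrically (a) at $(x,y,z^{R})$ gives $x+z^{R}\le y+z^{R}$, whence $x+z<y+z^{R}$ because $x+z^{R}$ is a right option of $x+z$. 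This closes the induction.

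The difficulty here is organizational rather than computational: one must recognize that (a) and (b) have to be carried through one and the same induction, and that the right vehicle is the multi-variable induction principle for $\mathsf{No}$ with the inductive hypothesis available at every componentwise predecessor of $(x,y,z)$ in the well-founded option ordering. Once that scaffold is in place, the only care needed is to match each option of $x+z$ and of $y+z$ against the definition of surreal addition from Section 2 and to invoke the appropriate constructor for $<$ or $\le$; every individual step is then immediate, and in particular no appeal to transitivity or cotransitivity of $<$ is required.
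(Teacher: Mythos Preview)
Your proposal is correct and is precisely the argument that the paper defers to: the paper's own proof is just the citation ``See pp.~535--36 in \cite{key-24},'' and what you have written is exactly the simultaneous $(<,\le)$-induction carried out there (in the HoTT book this is packaged into the very verification that the recursive definition of $+$ respects the order constructors). Your identification of the need to strengthen to the pair (a)/(b) and to use the multi-variable option-induction with componentwise predecessors matches that treatment, so there is nothing to add.
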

\begin{proof}
See pp. 535-36 in \cite{key-24}.\end{proof}
\begin{lem}[Theorem 11.6.4 in \cite{key-24}]
Let $x$ be a surreal number given by $\left\{ x^{L}|x^{R}\right\} $.
Then $x^{L}<x<x^{R}$. \label{lem: 15}
\end{lem}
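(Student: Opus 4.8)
The plan is to deduce the lemma from reflexivity of $\le$, which I will in turn prove by surreal-number induction. First I would show, by induction on $x$, that $x\le x$ for every $x:\mathsf{No}$. This predicate is a mere proposition, since $\le$ on $\mathsf{No}$ is, so the induction principle recalled in Section 2 applies. Fix $x$ given by a cut $\{x^{L}|x^{R}\}$, with induction hypothesis that $x^{L}\le x^{L}$ and $x^{R}\le x^{R}$ for every left option $x^{L}$ and right option $x^{R}$ of $x$.

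From the induction hypothesis I obtain, for a fixed $L$, that $x^{L}\le x^{L}$; since $x^{L}$ is itself a left option of $x$, the constructor for $<$ — applied with the witnessing left option of $x$ taken to be $x^{L}$ — yields $x^{L}<x$. Symmetrically, for a fixed $R$, $x^{R}\le x^{R}$ together with the constructor for $<$ (now with the witnessing right option of $x$ taken to be $x^{R}$) yields $x<x^{R}$. Having $x^{L}<x$ for all $L$ and $x<x^{R}$ for all $R$, the constructor for $\le$ gives $x\le x$, which closes the reflexivity induction.

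This argument simultaneously proves the present lemma: the inequalities $x^{L}<x$ and $x<x^{R}$ were established for an arbitrary $x:\mathsf{No}$ given by a cut and for arbitrary options $x^{L},x^{R}$, which is exactly the statement to be shown. The main subtlety will be recognizing that $x^{L}<x$ genuinely needs reflexivity of $\le$ at $x^{L}$, so one cannot prove the options inequalities in isolation — they must ride along with the reflexivity induction; and one must check that there is no circularity, since reflexivity at $x$ is invoked only at the options $x^{L},x^{R}$ while deriving the $<$-inequalities at $x$, never at $x$ itself. No transitivity or further order lemmas are needed; everything follows directly from the two constructors for $<$ and $\le$ given in Section 2.
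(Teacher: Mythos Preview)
Your proof is correct and is precisely the argument given for Theorem 11.6.4 in \cite{key-24}, which the paper simply cites without reproducing. The simultaneous induction establishing reflexivity of $\le$ together with the option inequalities, via the $<$-constructor applied with the option itself as witness, is exactly the intended proof.
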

We have this nice criterion for equality:
\begin{lem}
Let $x,y$ be surreal numbers given by $\left\{ x^{L}|x^{R}\right\} $
and $\left\{ y^{L}|y^{R}\right\} $, respectively. If, for each $x^{L},x^{R}$,
there are $L',R'$ such that $x^{L}=y^{L'}$ and $x^{R}=y^{R'}$,
and for each $y^{L},y^{R}$, there are $L',R'$ such that $x^{L'}=y^{L}$
and $x^{R'}=x^{R}$, then $x=y$. \label{lem: 16}\end{lem}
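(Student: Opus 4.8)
The plan is to use the additional defining property of $\mathsf{No}$ recorded just before Theorem \ref{thm: 16-1}, namely $x = y \simeq (x \leq y)\times(y\leq x)$, so that it suffices to produce proofs of $x\leq y$ and $y\leq x$ and then apply the inverse of this equivalence. I would obtain each of the two inequalities by invoking the constructor for $\leq$ from Section 2: to conclude $x\leq y$ it is enough to show $x^{L}<y$ for every left option $x^{L}$ of $x$ and $x<y^{R}$ for every right option $y^{R}$ of $y$.

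For the first family, fix a left option $x^{L}$. By hypothesis there is $L'$ with $x^{L}=y^{L'}$, and by Lemma \ref{lem: 15} applied to $y=\{y^{L}\,|\,y^{R}\}$ we have $y^{L'}<y$; transporting this proof along the path $x^{L}=y^{L'}$ --- which is legitimate since $(\,\cdot\,<y):\mathsf{No}\rightarrow\mathcal{U}$ is a function, so Lemma \ref{lem: 15-2} turns $x^{L}=y^{L'}$ into an equality of types $(x^{L}<y)=(y^{L'}<y)$ --- yields $x^{L}<y$. For the second family, fix a right option $y^{R}$; by hypothesis there is $R'$ with $x^{R'}=y^{R}$, Lemma \ref{lem: 15} applied to $x$ gives $x<x^{R'}$, and the same kind of transport along $x^{R'}=y^{R}$ gives $x<y^{R}$. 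This proves $x\leq y$. The argument for $y\leq x$ is entirely symmetric: a left option $y^{L}$ comes, by hypothesis, with an $L'$ such that $x^{L'}=y^{L}$, and $x^{L'}<x$ (Lemma \ref{lem: 15}) transports to $y^{L}<x$; a right option $x^{R}$ comes with an $R'$ such that $x^{R}=y^{R'}$, and $y<y^{R'}$ transports to $y<x^{R}$. Feeding $x\leq y$ and $y\leq x$ through the equivalence produces $x=y$.

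I do not expect a genuine obstacle: the entire content is the transport steps and keeping track of which option of one number is matched with which option of the other. The one point to be careful about is that the hypothesis as written ($x^{R'}=x^{R}$ in the second clause) should read $x^{R'}=y^{R}$; with that reading the proof goes through exactly as above. Moreover, since each of $x<y$, $x\leq y$, $x^{L}<y$, and so on is a mere proposition, there are no coherence conditions to verify when performing the transports, which is why no appeal to the surreal induction principle is needed here.
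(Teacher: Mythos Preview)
Your proof is correct and follows essentially the same route as the paper: show $x\leq y$ via the $\leq$-constructor by using the hypothesis to match each $x^{L}$ with some $y^{L'}$ (and each $y^{R}$ with some $x^{R'}$), invoke Lemma~\ref{lem: 15} to get $y^{L'}<y$ (resp.\ $x<x^{R'}$), and substitute along the given equalities; then argue $y\leq x$ symmetrically and conclude $x=y$. Your version is simply more explicit about the transport step and about which direction of the equivalence/path constructor is being used, and you correctly flag the evident typo ($x^{R'}=x^{R}$ should read $x^{R'}=y^{R}$).
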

\begin{proof}
Since for each $L$, there is $L'$ such that $x^{L}=y^{L'}$ and
$y^{L'}<y$ by Lemma \ref{lem: 15}, it follows $x^{L}<y$. Similarly,
$x<y^{R}$ for all $R$. Hence, $x\leq y$. Similarly, $y\leq x$.
Therefore, $x=y$.
\end{proof}
Addition and negation on $\mathsf{No}$ are defined in \cite{key-24}
(pp. 530-31 \& pp. 534-36). We will sometimes write $x-y$ for $x+\left(-y\right)$.
\begin{thm}
$\mathsf{No}$ is an abelian group, that is, for all $x,y,z:\mathsf{No}$,
\label{thm: 17}
\begin{enumerate}
\item $x+0=x$;
\item $x-x=0$;
\item $x+\left(y+z\right)=\left(x+y\right)+z$;
\item $x+y=y+x$;
\end{enumerate}
\end{thm}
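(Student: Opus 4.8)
The plan is to verify the four abelian-group equations by induction on $\mathsf{No}$ (the induction principle of Section 2 applies because each equation is an equality in the set $\mathsf{No}$, hence a mere proposition by Theorem \ref{thm: 16-1}), in each case reducing the desired equality to a comparison of the options of the two sides. Write $0$ for the surreal $\left\{ \mid\right\}$ given by the empty cut, recall $x+y=\left\{ x^{L}+y,x+y^{L}\mid x^{R}+y,x+y^{R}\right\}$ and $-x=\left\{ -x^{R}\mid-x^{L}\right\}$, and abbreviate $x-y:=x+\left(-y\right)$. For (1), induct on $x$ given by a cut: since $0$ has no options, $x+0$ is presented by $\left\{ x^{L}+0\mid x^{R}+0\right\}$, and the inductive hypotheses $x^{L}+0=x^{L}$, $x^{R}+0=x^{R}$ make the options of $x+0$ and of $x$ match, so Lemma \ref{lem: 16} gives $x+0=x$. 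For (4), double-induct on $x,y$: the left options of $x+y$ are the $x^{L}+y$ and the $x+y^{L}$, which by the inductive hypotheses equal the $y+x^{L}$ and the $y^{L}+x$, i.e., the left options of $y+x$; the right options match the same way, and Lemma \ref{lem: 16} gives $x+y=y+x$. For (3), triple-induct on $x,y,z$ (the principle extends to finitely many variables): the left options of $x+\left(y+z\right)$ are $x^{L}+\left(y+z\right)$, $x+\left(y^{L}+z\right)$, $x+\left(y+z^{L}\right)$, which by the hypotheses $P\left(x^{L},y,z\right)$, $P\left(x,y^{L},z\right)$, $P\left(x,y,z^{L}\right)$ equal $\left(x^{L}+y\right)+z$, $\left(x+y^{L}\right)+z$, $\left(x+y\right)+z^{L}$, the left options of $\left(x+y\right)+z$; with the analogous matching on the right, Lemma \ref{lem: 16} gives associativity.

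Equation (2) is the one I expect to be the main obstacle, since $0$ has no options and therefore cannot be reached from $x-x$ by pure option-matching. Instead I would induct on $x$ with inductive hypotheses $x^{L}-x^{L}=0$ and $x^{R}-x^{R}=0$, and prove that every left option of $x-x$ is $<0$ and every right option is $>0$; this gives $x-x\leq0$ and $0\leq x-x$, hence $x-x=0$. The left options of $x-x$ are $x^{L}+\left(-x\right)$ and $x+\left(-x^{R}\right)$: the surreal $x^{L}+\left(-x\right)$ has $x^{L}+\left(-x^{L}\right)$ as a right option (because $-x^{L}$ is a right option of $-x$), and $x^{L}+\left(-x^{L}\right)=x^{L}-x^{L}=0$ by the inductive hypothesis, so $x^{L}+\left(-x\right)<0$ by Lemma \ref{lem: 15}; symmetrically $x+\left(-x^{R}\right)$ has $x^{R}+\left(-x^{R}\right)=0$ as a left option and is thus $<0$. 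The right options $x^{R}+\left(-x\right)$ and $x+\left(-x^{L}\right)$ are treated identically: each has $0$ as a left option (namely $x^{R}+\left(-x^{R}\right)$, respectively $x^{L}+\left(-x^{L}\right)$), hence each is $>0$ by Lemma \ref{lem: 15}.

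Apart from (2), the only delicate point is confirming that, in the triple induction for (3), the instances of the inductive hypothesis actually used — $P\left(x^{L},y,z\right)$, $P\left(x,y^{L},z\right)$, $P\left(x,y,z^{L}\right)$ together with the three in which an $L$ is replaced by an $R$ — are precisely those supplied by the multi-variable surreal induction principle; granting this, (1), (3) and (4) are routine applications of Lemma \ref{lem: 16}, and only (2) needs the order-theoretic argument above.
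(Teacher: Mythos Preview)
Your argument is essentially the paper's: surreal induction together with option-matching via Lemma \ref{lem: 16} for (1), (3), (4), and for (2) the same order-theoretic step showing each left option of $x-x$ is $<0$ and each right option $>0$ (the paper phrases the key inequality $x^{L}+(-x)<x^{L}+(-x^{L})=0$ via Lemma \ref{lem: 14} rather than by naming $x^{L}+(-x^{L})$ as a right option, but the content is the same). One slip to fix: in your treatment of the left option $x+(-x^{R})$ you say $x^{R}+(-x^{R})=0$ is a \emph{left} option of it --- it is a \emph{right} option (take the right option $x^{R}$ of $x$ in the sum), and that is exactly what gives $x+(-x^{R})<0$; as written, a left option equal to $0$ would yield the wrong sign.
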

\begin{proof}
(1) We use induction on $x$. Let $x$ be given by the cut $\left\{ x^{L}|x^{R}\right\} $.
Recall that $0=\left\{ \textrm{}|\textrm{}\right\} $, so from the
definition of $x+y=\left\{ x^{L}+y,x+y^{L}|x^{R}+y,x+y^{R}\right\} $,
the expression $x+y^{L}$ and $x+y^{R}$ vanish when $y=0$. Hence,
$x+0=\left\{ x^{L}+0|x^{R}+0\right\} $. By induction, $x^{L}+0=x^{L}$
and $x^{R}+0=x^{R}$, so $x+0=x$.

(2) We use induction on $x$. Let $x$ be given by $\left\{ x^{L}|x^{R}\right\} $.
Recall that $-x=\left\{ -x^{R}|-x^{L}\right\} $, so $x+\left(-x\right)=\left\{ x^{L}+\left(-x\right),x+\left(-x^{R}\right)|x^{R}+\left(-x\right),x+\left(-x^{L}\right)\right\} $.
Since $-x<-x^{L}$ by Lemma \ref{lem: 15}, it follows $x^{L}+\left(-x\right)<x^{L}+\left(-x^{L}\right)=0$
by Lemma \ref{lem: 14} and by induction. Similarly, $x+\left(-x^{R}\right)<0$.
Hence, $x+\left(-x\right)\leq0$. Similarly, $0\leq x+\left(-x\right)$.
Therefore, $x+\left(-x\right)=0$.

(3) We use induction on $x,y,z$. Let $x$ be given by $\left\{ x^{L}|x^{R}\right\} $;
$y$ by $\left\{ y^{L}|y^{R}\right\} $; and $z$ by $\left\{ z^{L}|z^{R}\right\} $.
The left options of $x+\left(y+z\right)$ are
\[
\left\{ x^{L}+\left(y+z\right),x+\left(y+z\right)^{L}|\cdots\right\} =\left\{ x^{L}+\left(y+z\right),x+\left(y^{L}+z\right),x+\left(y+z^{L}\right)|\cdots\right\} ,
\]
 and the left options of $\left(x+y\right)+z$ are
\[
\left\{ \left(x+y\right)^{L}+z,\left(x+y\right)+z^{L}|\cdots\right\} =\left\{ \left(x^{L}+y\right)+z,\left(x+y^{L}\right)+z,\left(x+y\right)+z^{L}|\cdots\right\} .
\]
 By induction, each left option of $x+\left(y+z\right)$ is equal
to a left option of $\left(x+y\right)+z$, and vice versa. Similarly,
by induction, each right option of $x+\left(y+z\right)$ is equal
to a right option of $\left(x+y\right)+z$, and vice versa. Hence,
$x+\left(y+z\right)=\left(x+y\right)+z$ by Lemma \ref{lem: 16}.

(4) We use induction on $x,y$. Let $x$ be given by $\left\{ x^{L}|x^{R}\right\} $
and $y$ by $\left\{ y^{L}|y^{R}\right\} $. By definition, 
\[
x+y=\left\{ x^{L}+y,x+y^{L}|x^{R}+y,x+y^{R}\right\} ,
\]
 and
\[
y+x=\left\{ y^{L}+x,y+x^{L}|y^{R}+x,y+x^{R}\right\} .
\]
 Hence, $x+y=y+x$ by Lemma \ref{lem: 16}.\end{proof}
\begin{cor}
$\mathsf{No}$ is an ordered abelian group. \label{cor: 18}\end{cor}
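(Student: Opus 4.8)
The plan is to read off the corollary from results already established, since by definition an ordered abelian group is an abelian group that is simultaneously an ordered set and whose strict order is invariant under right translation. Thus there are three things to assemble: the abelian-group structure on $\mathsf{No}$, the three axioms making $(\mathsf{No}, <)$ an ordered set, and the implication $x < y \Rightarrow x + z < y + z$.

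For the ordered-set axioms I would argue as follows. Asymmetry is built into the constructor for $<$ on $\mathsf{No}$ given in Section~2, so $x < y$ implies $\neg\, y < x$ directly. Cotransitivity is Corollary~\ref{cor: 19}. Negative antisymmetry is the one clause that is not immediate: here I would use Theorem~3 of \cite{key-17}, which says that in any set with a cotransitive relation, $\neg\, y < x$ implies both ($z < x \Rightarrow z < y$) and ($y < z \Rightarrow x < z$) for all $z$ --- and this is exactly the definition of $x \leq y$ as it appears in the higher inductive-inductive presentation of $\mathsf{No}$. Applying this to $\neg\, x < y$ and $\neg\, y < x$ gives $x \leq y$ and $y \leq x$, hence $x = y$ by the equality constructor for $\mathsf{No}$. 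This is precisely the reasoning sketched in the paragraph after Theorem~\ref{thm: 16-1}.

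The remaining two ingredients are already on the books: $\mathsf{No}$ is an abelian group by Theorem~\ref{thm: 17} (associativity, commutativity, neutral element $0$, inverses by negation), and $x < y \Rightarrow x + z < y + z$ is the special case of Lemma~\ref{lem: 14} obtained by forgetting its second, symmetric conclusion. Combining these with the ordered-set verification and the definition of an ordered abelian group completes the proof. The only delicate point --- more a matter of bookkeeping than a genuine obstacle --- is ensuring that the relation $\leq$ used implicitly in the ordered-set axioms is the same $\leq$ that comes with $\mathsf{No}$; the detour through \cite{key-17} is exactly what reconciles the two, which is why negative antisymmetry is the least routine of the three checks.
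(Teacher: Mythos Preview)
Your proposal is correct and follows essentially the same route as the paper. The paper's own proof of Corollary~\ref{cor: 18} is the one-liner ``By Lemma~\ref{lem: 14} and Theorem~\ref{thm: 17},'' which works only because the ordered-set verification (asymmetry from the definition, cotransitivity via Corollary~\ref{cor: 19}, negative antisymmetry via Theorem~3 of \cite{key-17}) has already been carried out in the paragraph following Theorem~\ref{thm: 16-1}; you have simply unpacked that paragraph and folded it into the proof of the corollary itself. One small inaccuracy: the condition ``$z<x\Rightarrow z<y$ and $y<z\Rightarrow x<z$'' is not literally the constructor-level definition of $x\le y$ in Section~2 (which is phrased in terms of options), so ``exactly the definition'' overstates things---the paper also glosses over this passage with ``Then follows $x\le y$ in $\mathsf{No}$ as defined in Section~2,'' so you are at the same level of detail, but it would be cleaner to say that this condition \emph{implies} the option-based $x\le y$.
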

\begin{proof}
By Lemma \ref{lem: 14} and Theorem \ref{thm: 17}.\end{proof}
\begin{lem}
Let $\left(G,+,-,0\right)$ be an abelian group. For all $a,b:G,-\left(a-b\right)=-a+b.$
\label{lem: 19}\end{lem}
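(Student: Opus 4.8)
The plan is to recognize $-a+b$ as an additive inverse of $a-b$ and then appeal to uniqueness of inverses, a fact I would first extract from the four group axioms. Concretely, I would begin by proving the auxiliary claim that for all $x,y:G$, if $x+y=0$ then $y=-x$. This follows from the chain $y=0+y=(-x+x)+y=-x+(x+y)=-x+0=-x$, where I use commutativity (axiom (4)) to rewrite axiom (2) in the form $-x+x=0$ and to rewrite axiom (3) in the form $0+y=y$, together with associativity (axiom (1)).

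Next I would compute $(a-b)+(-a+b)$. Unfolding $a-b$ as $a+(-b)$ and then reassociating and commuting the four summands, I obtain $(a-b)+(-a+b)=(a+(-b))+((-a)+b)=(a+(-a))+((-b)+b)=0+0=0$, invoking axiom (2) for the first pair, its commuted form for the second pair, and axiom (3) to cancel the resulting $0+0$.

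Finally, applying the auxiliary claim with $x:=a-b$ and $y:=-a+b$ yields $-(a-b)=-a+b$, which is the statement.

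I do not expect a genuine obstacle: the identity is a purely formal consequence of the abelian group axioms. The only point requiring a little care is that the axioms are recorded as $x-x=0$ and $x+0=x$ rather than their left-handed variants, so each use of ``$-x+x=0$'' or ``$0+x=x$'' must be routed through commutativity; and the uniqueness-of-inverses fact, not stated earlier in the excerpt, has to be established en route as above.
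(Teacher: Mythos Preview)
Your proof is correct and essentially matches the paper's argument: both rely on the computation $(a-b)+(-a+b)=0$ together with the fact that $-(a-b)$ is an additive inverse of $a-b$. The only cosmetic difference is that you isolate uniqueness of inverses as an explicit auxiliary claim, whereas the paper simply adds $-a$ and $b$ to both sides of $-(a-b)+(a-b)=0$ and simplifies.
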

\begin{proof}
Since $-\left(a-b\right)$ is the additive inverse of $a-b$, it follows
$-\left(a-b\right)+a-b=0$. Adding $-a$ and $b$ to both sides of
the last equation gives $-\left(a-b\right)=-a+b$.
\end{proof}

\section{Multiplication on the Positive Surreal Numbers}

For an inductive type%
\footnote{See Section 2.%
} $A$ with binary relations $<:A\rightarrow A\rightarrow\mathsf{Prop}$
and $\leq:A\rightarrow A\rightarrow\mathsf{Prop}$ and any type $B$
with binary relations $\vartriangleleft:B\rightarrow B\rightarrow\mathsf{Prop}$
and $\trianglelefteq:B\rightarrow B\rightarrow\mathsf{Prop}$, to
define a function $f:A\rightarrow B$, proceed as follows: for each
$x:A$, assume that $f$ is defined for all elements of $A$ generated
before $x$; then define $f\left(x\right)$. Also assume that $f$
preserves both $<$ and $\leq$ on all elements generated before $x$.
Assume $x<y$; then show $f\left(x\right)\vartriangleleft f\left(y\right)$.
Assume $x\leq y$; then show $f\left(x\right)\trianglelefteq f\left(y\right)$. 

The positive surreal numbers $\mathsf{No}_{>0}=\sum_{x:\mathsf{No}}0<x$
are an inductive type with the same constructors as $\mathsf{No}$
(See Section 2.), and the binary relations on $\mathsf{No}_{>0}$
are those of $\mathsf{No}$ restricted to $\mathsf{No}_{>0}$. We
will define a multiplication on $\mathsf{N}_{>0}$. Let $B$ be the
type of functions $\mathsf{No}_{>0}\rightarrow\mathsf{No}$ such that,
for each $f:B$, $y<y'$ implies $f\left(y\right)<f\left(y'\right)$
and $y\leq y'$ implies $f\left(y\right)\leq f\left(y'\right)$. The
binary relations on $B$ are as follows: $f\vartriangleleft g$ if
$f\left(x\right)<g\left(x\right)$ for all $x:\mathsf{No}_{>0}$ and
$f\trianglelefteq g$ if $f\left(x\right)\leq g\left(x\right)$ for
all $x:\mathsf{No}_{>0}$. To have a function $\mathsf{No}_{>0}\rightarrow B$
defined by $x\mapsto x\cdot-$, where $x$ is given by the cut $\left\{ x^{L}|x^{R}\right\} $,
we assume $x^{L}\cdot-$ and $x^{R}\cdot-$ are defined, and $x^{L}\cdot-\vartriangleleft x^{R}\cdot-$.
And to show that each $x\cdot-$ is in $B$, we assume that $xy^{L}$
and $xy^{R}$ are defined for each $y$ given by $\left\{ y^{L}|y^{R}\right\} $,
and $xy^{L}<xy^{R}$. Simultaneously, we will define $-\cdot y$ for
each $y:\mathsf{No}_{>0}$, with similar assumptions. So for each
$x,y:\mathsf{No}_{>0}$, where $x$ is given by $\left\{ x^{L}|x^{R}\right\} $
and $y$ is given by $\left\{ y^{L}|y^{R}\right\} $, we have
\[
xy\coloneqq\left\{ \overset{A}{\overbrace{x^{L}y+\left(x-x^{L}\right)y^{L}}},\overset{B}{\overbrace{x^{R}y-\left(x^{R}-x\right)y^{R}}}|\overset{C}{\overbrace{x^{L}y+\left(x-x^{L}\right)y^{R}}},\overset{D}{\overbrace{x^{R}y-\left(x^{R}-x\right)y^{L}}}\right\} .
\]

Before proving the following theorem, we will assume the following:
let $x:\mathsf{No}_{>0}$ be given by $\left\{ x^{L}|x^{R}\right\} $
and $z,z':\mathsf{No}_{>0}$; then $x^{L}\left(z\pm z'\right)=x^{L}z\pm x^{L}z'$
and $x^{R}\left(z\pm z'\right)=x^{R}z\pm x^{R}z'$, providing that
$z\pm z'>0$. Similarly, let $y:\mathsf{No}_{>0}$ be given by $\left\{ y^{L}|y^{R}\right\} $;
then $\left(z\pm z'\right)y^{L}=zy^{L}\pm z'y^{L}$ and $\left(z\pm z'\right)y^{R}=zy^{R}\pm z'y^{R}$,
providing that $\left(z\pm z'\right)>0$. Note that these assumptions
are also needed to prove that the distributive law holds by induction.
\begin{thm}
Let $x,y$ be surreal numbers given by cuts. Then $xy$ is a surreal
number.\end{thm}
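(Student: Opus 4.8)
The plan is to unwind the constructor of $\mathsf{No}$ from Section~2: it produces a surreal number from the families of left and right options of a cut as soon as every left option is $<$ every right option. Writing $A=x^{L}y+(x-x^{L})y^{L}$ and $B=x^{R}y-(x^{R}-x)y^{R}$ for the two left options of the displayed cut and $C=x^{L}y+(x-x^{L})y^{R}$ and $D=x^{R}y-(x^{R}-x)y^{L}$ for the two right options, the whole theorem reduces to the four inequalities $A<C$, $A<D$, $B<C$, $B<D$. We argue inside the induction that defines $\cdot$ (Section~8), so we may assume: all the products occurring in $A,B,C,D$ and in their options are already defined; $x^{L}\cdot-\vartriangleleft x^{R}\cdot-$, i.e.\ $x^{L}z<x^{R}z$ for $z>0$; $wy^{L}<wy^{R}$ whenever $w>0$; and the distributivity identities stated just before the theorem. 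We also use that $\mathsf{No}$ is an abelian group (Theorem~\ref{thm: 17}), that addition preserves and reflects $<$ (Lemma~\ref{lem: 14}), that $u<v$ implies $-v<-u$ (Lemma~\ref{lem: 14-1}), and Lemma~\ref{lem: 15}; moreover $x^{L}<x^{R}$ and $y^{L}<y^{R}$ hold because $x$ and $y$ were formed as cuts, so that $x-x^{L}$, $x^{R}-x$, $x^{R}-x^{L}$, $y-y^{L}$ and $y^{R}-y$ are all positive.

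First I would dispose of $A<C$ and $B<D$, the harmless pair, since in each the two options differ in a single summand. Cancelling $x^{L}y$ by Lemma~\ref{lem: 14}, $A<C$ is equivalent to $(x-x^{L})y^{L}<(x-x^{L})y^{R}$, an instance of the assumption $wy^{L}<wy^{R}$ with $w=x-x^{L}>0$. Symmetrically, cancelling $x^{R}y$ and flipping a sign with Lemma~\ref{lem: 14-1}, $B<D$ reduces to $(x^{R}-x)y^{L}<(x^{R}-x)y^{R}$, again that assumption with $w=x^{R}-x>0$.

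The cross inequalities $A<D$ and $B<C$ are the substance of the argument. For $A<D$ I would move $(x^{R}-x)y^{L}$ to the left side (Lemma~\ref{lem: 14}), collapse $(x-x^{L})y^{L}+(x^{R}-x)y^{L}$ into $(x^{R}-x^{L})y^{L}$ by the left-factor distributivity (with $z=x-x^{L}$, $z'=x^{R}-x$, $z+z'=x^{R}-x^{L}>0$), and then rewrite $(x^{R}-x^{L})y^{L}$ as $x^{R}y^{L}-x^{L}y^{L}$ by the same identity; after rearranging, the goal becomes $x^{L}y-x^{L}y^{L}<x^{R}y-x^{R}y^{L}$. The distributivity identities for $x^{L}\cdot-$ and $x^{R}\cdot-$ apply here because $y-y^{L}>0$, turning this into $x^{L}(y-y^{L})<x^{R}(y-y^{L})$, which is precisely $x^{L}\cdot-\vartriangleleft x^{R}\cdot-$ evaluated at the positive surreal $y-y^{L}$. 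The inequality $B<C$ is the mirror image: the analogous bookkeeping, together with one application of Lemma~\ref{lem: 14-1}, reduces it to $x^{L}(y^{R}-y)<x^{R}(y^{R}-y)$, which is $x^{L}\cdot-\vartriangleleft x^{R}\cdot-$ at the positive surreal $y^{R}-y$.

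I expect the only real friction to be these cross inequalities, and more precisely the bookkeeping of which distributivity identities are licensed at each step: the assumed distributive laws carry positivity provisos, so rewritings such as $(x^{R}-x^{L})y^{L}=x^{R}y^{L}-x^{L}y^{L}$ or $x^{R}y-x^{R}y^{L}=x^{R}(y-y^{L})$ are legal only once one has recorded that $x^{R}-x^{L}>0$, respectively $y-y^{L}>0$; keeping all such provisos satisfied while shuffling summands is where care is needed. It is also why the formula for $xy$ is written with the positive factors $x-x^{L}$ and $x^{R}-x$ in place of bare products like $x^{L}y^{L}$. If one prefers, the cross inequalities can instead be extracted from Lemma~\ref{lem: 15-1}, which packages exactly the sign-juggling that surfaces here.
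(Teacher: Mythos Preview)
Your argument is correct and matches the paper's proof in substance. The only cosmetic difference is in the cross inequalities: the paper routes $A<D$ and $B<C$ through Lemma~\ref{lem: 15-1} (after setting up $a=x^{L}y<x^{R}y=b$ and $b'=-(x^{R}-x)y^{L}<(x-x^{L})y^{L}=a'$, then checking $a'-b'<b-a$), whereas you carry out the equivalent cancellation directly; both reductions land on the same key step $x^{L}(y-y^{L})<x^{R}(y-y^{L})$ (resp.\ $x^{L}(y^{R}-y)<x^{R}(y^{R}-y)$), and you yourself note the Lemma~\ref{lem: 15-1} alternative.
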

\begin{proof}
Note that $x-x^{L},y^{R}-y,y-y^{L}>0$ by Corollary \ref{cor: 18}.
We will show each left option of $xy$ is less than each right option
of $xy$. To show $A<C$, recall that $-\cdot y^{L}$ and $-\cdot y^{R}$
are defined and $-\cdot y^{L}\vartriangleleft-\cdot y^{R}$, so $\left(x-x^{L}\right)y^{L}<\left(x-x^{L}\right)y^{R}$
by assumptions on $-\cdot y$. Hence, $x^{L}y+\left(x-x^{L}\right)y^{L}<x^{L}y+\left(x-x^{L}\right)y^{R}$
by Corollary \ref{cor: 18}. 

To show $A<D$, note that 
\[
x^{L}y<x^{R}y
\]
 because $x^{L}\cdot-\vartriangleleft x^{R}\cdot-$ by assumption.
Also note that 
\[
-\left(x^{R}-x\right)y^{L}<\left(x-x^{L}\right)y^{L}
\]
 because $-\left(x^{R}-x\right)<x-x^{L}$ and $-\cdot y^{L}$ is order
preserving by assumption. To use Lemma \ref{lem: 15-1}, note that
\[
x^{L}\left(y-y^{L}\right)<x^{R}\left(y-y^{L}\right)
\]
 because $x^{L}\cdot-\vartriangleleft x^{R}\cdot-$ by assumption,
so 
\[
x^{L}y-x^{L}y^{L}<x^{R}y-x^{R}y^{L}
\]
 because of the ``distributive'' assumptions before the theorem.
Hence, 
\[
xy^{L}-x^{L}y^{L}+x^{R}y^{L}-xy^{L}<x^{R}y-x^{L}y
\]
 by Corollary \ref{cor: 18}, and finally
\[
\left(x-x^{L}\right)y^{L}-\left[-\left(x^{R}-x\right)y^{L}\right]<x^{R}y-x^{L}y.
\]
 Therefore, by Lemma \ref{lem: 15-1}, $A<D$.

To show $B<D$, note that $\left(x^{R}-x\right)y^{L}<\left(x^{R}-x\right)y^{R}$
since $-\cdot y^{L}\vartriangleleft-\cdot y^{R}$, so $-\left(x^{R}-x\right)y^{R}<-\left(x^{R}-x\right)y^{L}$
by Lemma \ref{lem: 14-1}. Hence, $x^{R}y-\left(x^{R}-x\right)y^{R}<x^{R}y-\left(x^{R}-x\right)y^{L}$.

To show $B<C$, note that 
\[
-\left(x^{R}-x\right)y^{R}<\left(x-x^{L}\right)y^{R}
\]
 because $-\left(x^{R}-x\right)<x-x^{L}$ and $-\cdot y^{R}:B$, and
note that 
\[
x^{L}y<x^{R}y
\]
 because $x^{L}\cdot-\vartriangleleft x^{R}\cdot-$. Similarly as
in the penultimate paragraph, 
\[
x^{R}y-x^{L}y<\left(x-x^{L}\right)y^{R}-\left[-\left(x^{R}-x\right)y^{R}\right],
\]
 so $B<C$ by Lemma \ref{lem: 15-1}.
\end{proof}
Recall that we need each $x\cdot-:B$ and we need each $x\mapsto x\cdot-$
to preserve $<$ and $\leq$. To do that, we prove the following:
\begin{thm}
Let $x,y:\mathsf{No}_{>0}$. Then $xy:\mathsf{No}_{>0}$. \label{thm: 18}\end{thm}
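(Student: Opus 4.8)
The previous theorem already shows that $xy=\{A,B\mid C,D\}$ is a well-defined surreal number, where $A=x^{L}y+\left(x-x^{L}\right)y^{L}$ and $B=x^{R}y-\left(x^{R}-x\right)y^{R}$ are its left options, so the only thing left is to prove $0<xy$. The plan is to avoid transitivity entirely and appeal directly to the constructor for $<$ on $\mathsf{No}$: since $0=\{\,|\,\}$ is given by the empty cut, it has no right options, and hence $0<xy$ holds the moment we exhibit a single left option $P$ of $xy$ with $0\leq P$. So the whole proof reduces to producing one nonnegative left option of $xy$.

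To find it, I would first read off options from the hypotheses. Because $0<x$ and $0$ has no right options, the constructor for $<$ forces a left option $x^{L}$ of $x$ with $0\leq x^{L}$; symmetrically $0<y$ yields a left option $y^{L}$ of $y$ with $0\leq y^{L}$. I then take the associated left option $A=x^{L}y+\left(x-x^{L}\right)y^{L}$ and aim to show $0\leq A$. By Corollary~\ref{cor: 18} and Lemma~\ref{lem: 15} we have $0<x-x^{L}$, so $A$ is a sum of two products: $x^{L}y$, a product of a nonnegative surreal with the positive surreal $y$, and $\left(x-x^{L}\right)y^{L}$, a product of a positive surreal with a nonnegative surreal (which, via the distributivity assumptions stated just before the previous theorem, I would rewrite as $xy^{L}-x^{L}y^{L}$ so that it is reachable through the recursion on $y$ together with the recursion on $x$ defining $x^{L}\cdot-$). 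Granting that each such product is $\geq 0$, Theorem~\ref{thm: 16}(5) gives $0\leq A$, and then the $<$-constructor gives $0<xy$, i.e.\ $xy:\mathsf{No}_{>0}$.

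The main obstacle I expect is not the shape of the argument but justifying the "product of nonnegatives is nonnegative" step inside the constructive, inductive setup. The options $x^{L},y^{L}$ are only known to satisfy $0\leq x^{L}$ and $0\leq y^{L}$, not $0<x^{L}$ and $0<y^{L}$, so one cannot simply case-split on whether they vanish and invoke Theorem~\ref{thm: 18} for the positive case; moreover the term $\left(x-x^{L}\right)y^{L}$ has a first factor that is not an option of $x$, so it only makes sense through the distributivity assumptions and careful bookkeeping of the simultaneous recursion. I would resolve this by strengthening the statement carried through the induction: rather than proving Theorem~\ref{thm: 18} alone, prove simultaneously that for every $a:\mathsf{No}$ with $0\leq a$ and every $b:\mathsf{No}_{>0}$ the product $ab$ (with $0\cdot b=0$ read directly off the empty cut) satisfies $0\leq ab$. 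With this strengthened hypothesis the nonnegativity of $x^{L}y$ and of $xy^{L}-x^{L}y^{L}$ becomes immediate and the degenerate boundary cases are absorbed into the induction instead of being treated by a non-constructive trichotomy.
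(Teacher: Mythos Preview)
Your proposal is correct and close to the paper's argument, but with two deliberate deviations worth noting. The paper also focuses on the left option $A=x^{L}y+(x-x^{L})y^{L}$, but it argues for the \emph{strict} inequality $0<A$ via Lemma~\ref{lem: 14-1}(1) (sum of two positives is positive), then invokes Lemma~\ref{lem: 15} to get $A<xy$ and finishes with transitivity of $<$. You instead aim for $0\leq A$ and use the $<$-constructor for $0=\{\,\mid\,\}$ directly, bypassing transitivity; this is a cleaner endgame and matches the HoTT-style definition more tightly. Second, you correctly flag that $x^{L},y^{L}$ are only known to satisfy $0\leq x^{L}$, $0\leq y^{L}$, so one cannot blindly apply a ``positive times positive is positive'' hypothesis, and you propose carrying the mixed statement ``$0\leq a$ and $0<b$ imply $0\leq ab$'' through the induction. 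The paper handles exactly this point, but only in the remark immediately following the proof, where it upgrades to ``$0\leq x$ and $0<y$ imply $0<xy$'' using Theorem~\ref{thm: 16}(6); in the proof proper it tacitly treats $x^{L}\cdot-$ and $-\cdot y^{L}$ as positive functions. So your strengthening is precisely what the paper needs and eventually supplies, and your version builds it into the induction from the start rather than patching it afterward.
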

\begin{proof}
We use induction on $x$ and $y$. Note that $x,y$ are arbitrary,
so we will assume that the functions $x^{L}\cdot-$, $x^{R}\cdot-$
and $-\cdot y^{L},-\cdot y^{R}$ are positive. In the definition of
$xy$ above, we have $0<x^{L}y+\left(x-x^{L}\right)y^{L}$ by Lemma
\ref{lem: 14-1}(1), so $0<xy$ by Lemma \ref{lem: 15} and by transitivity%
\footnote{The relation $<$ on $\mathsf{No}$ is transitive. See \cite{key-24},
Corollary 11.6.17.%
} of $<$.
\end{proof}
Observe that $0\leq x$ and $0<y$ also imply $0<xy$. By induction,
$0\leq x^{L}y$ for all $y$ and $-\cdot y$ is a positive function,
so $0<x^{L}y+\left(x-x^{L}\right)y^{L}=\left(xy\right)^{L}$ by Theorem
\ref{thm: 16}(6). Hence, $0<xy$.
\begin{thm}[Distributive Law]
For all $x,y,z:\mathsf{No}_{>0}$, $x\left(y+z\right)=xy+xz$. \label{thm: 19}\end{thm}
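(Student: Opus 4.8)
The plan is to prove the identity by a simultaneous induction on $x$, $y$, and $z$, closing the argument with the equality criterion of Lemma~\ref{lem: 16}. First I would write $x=\left\{ x^{L}|x^{R}\right\} $, $y=\left\{ y^{L}|y^{R}\right\} $, $z=\left\{ z^{L}|z^{R}\right\} $ and unfold the addition, $y+z=\left\{ y^{L}+z,y+z^{L}|y^{R}+z,y+z^{R}\right\} $. Unfolding the definition of multiplication, the left options of $x\left(y+z\right)$ are the terms $x^{L}\left(y+z\right)+\left(x-x^{L}\right)\left(y+z\right)^{L}$ and $x^{R}\left(y+z\right)-\left(x^{R}-x\right)\left(y+z\right)^{R}$, where $\left(y+z\right)^{L}$ ranges over $\left\{ y^{L}+z,\,y+z^{L}\right\} $ and $\left(y+z\right)^{R}$ over $\left\{ y^{R}+z,\,y+z^{R}\right\} $; the right options are obtained by interchanging $\left(y+z\right)^{L}$ and $\left(y+z\right)^{R}$. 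On the other side, $xy+xz=\left\{ \left(xy\right)^{L}+xz,\,xy+\left(xz\right)^{L}|\left(xy\right)^{R}+xz,\,xy+\left(xz\right)^{R}\right\} $, and each of $\left(xy\right)^{L},\left(xy\right)^{R},\left(xz\right)^{L},\left(xz\right)^{R}$ is again given by the multiplication formula. Throughout, $x-x^{L}$, $x^{R}-x$, $y-y^{L}$, and the like are positive by Corollary~\ref{cor: 18} and Lemma~\ref{lem: 15}, so every product written down lies in the intended domain.

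The core step is to show that each left option of $x\left(y+z\right)$ equals a left option of $xy+xz$, and conversely, with the analogous statement for right options; Lemma~\ref{lem: 16} then gives $x\left(y+z\right)=xy+xz$. I would take a typical left option, say $x^{L}\left(y+z\right)+\left(x-x^{L}\right)\left(y^{L}+z\right)$, and rewrite it with the induction hypothesis, that is, the distributive identities for terms generated before $x,y,z$ (these are precisely the ``distributive assumptions'' recorded before Theorem~\ref{thm: 18}): $x^{L}\left(y+z\right)=x^{L}y+x^{L}z$ and $\left(x-x^{L}\right)\left(y^{L}+z\right)=\left(x-x^{L}\right)y^{L}+\left(x-x^{L}\right)z$. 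Regrouping by the abelian group laws of Theorem~\ref{thm: 17} and Lemma~\ref{lem: 19}, this option becomes $\left[x^{L}y+\left(x-x^{L}\right)y^{L}\right]+\left[x^{L}z+\left(x-x^{L}\right)z\right]$; the first bracket is the left option $\left(xy\right)^{L}$ of $xy$, and the second, using the one-variable identity $x^{L}z+\left(x-x^{L}\right)z=xz$, is just $xz$, so the option equals $\left(xy\right)^{L}+xz$, a left option of $xy+xz$. The three remaining families of left options, and the four families of right options, are handled by exactly the same manipulation (the options built from $y+z^{L}$ or $z^{R}$ instead collapse their $y$-part to $xy$ via $x^{L}y+\left(x-x^{L}\right)y=xy$ and leave the $z$-part as an option of $xz$), and the converse matchings are these rewrites read backwards; notably, commutativity of multiplication is not needed.

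The main obstacle is not the term arithmetic but the organization of the induction. Several of the rewrites above apply a distributive identity whose left factor is $x-x^{L}$ (or $x^{R}-x$), which is not literally an option of $x$, and the identity $x^{L}z+\left(x-x^{L}\right)z=xz$ is a right-distributive fact with a full right factor; none of these is literally one of the recorded ``distributive assumptions'', which only permit an option as the distributing factor. So the simultaneous induction must be set up with a well-founded order on triples $\left(x,y,z\right)$ under which $\left(x-x^{L},y^{L},z\right)$ and the auxiliary splittings $\left(x^{L}+\left(x-x^{L}\right)\right)z$ count as earlier, or, equivalently, one must run a mutual induction that also carries these auxiliary distributive identities; verifying that the ordering is well-founded and that every instance invoked is genuinely earlier is the delicate point, after which the matching of options described above is routine.
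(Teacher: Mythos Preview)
Your overall strategy—triple induction on $x,y,z$ followed by option-matching via Lemma~\ref{lem: 16}—is exactly the paper's, and you have correctly located the genuine difficulty. In your second paragraph you invoke $(x-x^{L})(y^{L}+z)=(x-x^{L})y^{L}+(x-x^{L})z$ and $x^{L}z+(x-x^{L})z=xz$ as if they were among the recorded ``distributive assumptions'', but as you then acknowledge in your third paragraph, they are not: those assumptions only let you split the factor \emph{opposite} an option, and here neither $x-x^{L}$ nor the full variable $z$ is an option.

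The paper dissolves this obstacle by a simpler device than the strengthened mutual induction you propose. Rather than splitting the right factor of $(x-x^{L})(y^{L}+z)$, split the left one: since $y^{L}+z$ is a left option of $y+z$, the recorded assumption $(z\pm z')w^{L}=zw^{L}\pm z'w^{L}$ applies directly and gives $(x-x^{L})(y^{L}+z)=x(y^{L}+z)-x^{L}(y^{L}+z)$. Carried out uniformly, this rewrites any product into Conway's form
\[
xy=\left\{ x^{L}y+xy^{L}-x^{L}y^{L},\ x^{R}y+xy^{R}-x^{R}y^{R}\mid x^{L}y+xy^{R}-x^{L}y^{R},\ x^{R}y+xy^{L}-x^{R}y^{L}\right\},
\]
so that a typical left option of $x(y+z)$ reads $x^{L}(y+z)+x(y^{L}+z)-x^{L}(y^{L}+z)$. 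Each summand here is now an instance of the plain triple hypothesis—$P(x^{L},y,z)$, $P(x,y^{L},z)$, $P(x^{L},y^{L},z)$—and expanding yields $(x^{L}y+xy^{L}-x^{L}y^{L})+xz$, a left option of $xy+xz$ in the same Conway form. The remaining option-families match by the identical manipulation. No auxiliary right-distributive identity with a full right factor, and no enlargement of the well-founded order, is required.
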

\begin{proof}
We use induction on $x,y,z$. Let $P\left(x,y,z\right)$ be ``for
all $x,y,z:\mathsf{No}_{>0}$, $x\left(y+z\right)=xy+xz$.'' We assume
$P$ holds on triples $\left(x^{a_{1}},y^{a_{2}},z^{a_{3}}\right)$,
where each $a_{i}$ is $\emptyset$, $L$, or $R$, with the exception
that $a_{i}=\emptyset$ for all $i$, and we write $x^{\emptyset}$
for $x$. Under these assumptions, note that our definition of $xy$
above becomes 
\[
\left\{ x^{L}y+xy^{L}-x^{L}y^{L},x^{R}y-x^{R}y^{R}+xy^{R}|x^{L}y+xy^{R}-x^{L}y^{R},x^{R}y-x^{R}y^{L}+xy^{L}\right\} .
\]
 Upon expanding $x\left(y+z\right)$ and $xy+xz$ in a like manner,
it follows each left option of $x\left(y+z\right)$ is equal to a
left option of $xy+xz$, and vice versa, and the same goes for the
right options. Therefore, $x\left(y+z\right)=xy+xz$ by Lemma \ref{lem: 16}.
\end{proof}
Now that we have these results, we can prove that $x\cdot-:B$ for
each $x:\mathsf{No}_{>0}$ and that the function given by $x\mapsto x\cdot-$
is order preserving. At this point, the symbol $x\left(-y\right)$
is meaningless because $-y<0$, so we make the convention that $x\left(-y\right)\coloneqq-xy$. 
\begin{thm}
For all $x,x',y,y':\mathsf{No}_{>0}$,
\begin{enumerate}
\item $y<y'$ implies $xy<xy'$;
\item $y\leq y'$ implies $xy\leq xy'$;
\item $x<x'$ implies $xy<x'y$;
\item $x\leq x'$ implies $xy\leq x'y$.
\end{enumerate}
\end{thm}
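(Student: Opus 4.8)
The plan is to notice first that the four statements are precisely the two remaining obligations of the function-definition scheme of this section: (1) and (2) assert that $x\cdot-$ belongs to $B$, and (3) and (4) assert that $x\mapsto x\cdot-$ preserves $\vartriangleleft$ and $\trianglelefteq$. Accordingly I would prove all four together, by induction on the cuts of $x,x',y,y'$, so that every one of (1)--(4) is available as an inductive hypothesis whenever a left or right option is substituted for one of the four arguments; this is the same induction already running in the definition of $-\cdot-$, in the proof that $xy$ is a surreal, in Theorem~\ref{thm: 18}, and in the Distributive Law. Items (3) and (4) are the left--right mirror images of (1) and (2): they follow at once from commutativity of multiplication when that is available, and otherwise by rerunning the arguments below with the factors interchanged, using the simultaneously-defined functions $-\cdot y$ and the right-hand ``distributive'' assumptions introduced before the first theorem of this section.

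For the strict items I would argue as follows. Given $y<y'$, Corollary~\ref{cor: 18} gives $y'-y>0$, so $y,\,y'-y:\mathsf{No}_{>0}$; since $\mathsf{No}$ is an abelian group (Theorem~\ref{thm: 17}) we have $y+(y'-y)=y'$, and the Distributive Law (Theorem~\ref{thm: 19}) gives $xy'=x\big(y+(y'-y)\big)=xy+x(y'-y)$. By Theorem~\ref{thm: 18}, $x(y'-y)>0$, and adding $xy$ on both sides via Corollary~\ref{cor: 18} yields $xy<xy'$. Symmetrically $x<x'$ gives $xy<x'y$. If one prefers an argument that stays strictly inside the defining induction and does not route through the completed Theorems~\ref{thm: 18} and~\ref{thm: 19}, one unfolds $<$ on $\mathsf{No}$: $y<y'$ supplies either a left option $y'^{L}$ with $y\le y'^{L}$ or a right option $y^{R}$ with $y^{R}\le y'$; in the first case the left option $x^{L}y'+(x-x^{L})y'^{L}$ of $xy'$ is shown to be $\ge xy$, in the second the right option $x^{L}y+(x-x^{L})y^{R}$ of $xy$ is shown to be $\le xy'$, each by a short computation fed by Lemmas~\ref{lem: 15},~\ref{lem: 15-1},~\ref{lem: 14},~\ref{lem: 19} and the inductive hypotheses at the options.

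For the non-strict items the shortcut above is not available, because $y'-y$ need only be $\ge 0$ while Theorem~\ref{thm: 19} is stated for strictly positive arguments, and constructively one cannot split on $y=y'$ versus $y<y'$. I would therefore argue straight from the definition of $\le$ on $\mathsf{No}$: to obtain $xy\le xy'$ it suffices to check that each left option of $xy$ is $<xy'$ and that $xy$ is $<$ each right option of $xy'$. Each of these is a comparison of the shape ``$x^{L}y+(x-x^{L})y^{L}<xy'$'' (and its three siblings), which I would settle by pairing it with the matching option of $xy'$, reducing via the ``distributive'' assumptions of the section to an inequality between products at options, and closing with Lemma~\ref{lem: 15-1}, Lemma~\ref{lem: 15}, Theorem~\ref{thm: 16}(4),(6) and the inductive hypotheses (1)--(4). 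The step I expect to be the main obstacle is exactly this bookkeeping: choosing, for each left or right option of $xy$ or $xy'$, the option of the other product that makes Lemma~\ref{lem: 15-1} applicable, while keeping in mind that the options $x^{L},x^{R},y^{L},y^{R}$ of a positive surreal need not themselves be positive (for instance $0$ is the canonical left option of a small infinitesimal), so that every product such as $x^{L}y$ must be read through the general multiplication and the conventions for non-positive arguments, and the monotonicity hypotheses must be invoked in that extended form.
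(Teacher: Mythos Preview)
Your argument for item (1) is exactly the paper's: pass to $y'-y>0$, invoke Theorem~\ref{thm: 18} to get $x(y'-y)>0$, then distribute via Theorem~\ref{thm: 19} and add $xy$ to conclude $xy<xy'$. The paper then dispatches (2)--(4) with the single sentence ``proved similarly,'' without addressing the point you raise.

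The difference is that you take seriously the constructive obstruction in the non-strict cases: from $y\le y'$ one only gets $y'-y\ge 0$, so neither Theorem~\ref{thm: 18} nor Theorem~\ref{thm: 19} (both stated for $\mathsf{No}_{>0}$) applies on the nose, and one cannot case-split on $y'-y=0$ versus $y'-y>0$. Your plan to fall back on the definition of $\le$ and verify the option inequalities directly, using the inductive hypotheses together with Lemmas~\ref{lem: 15} and~\ref{lem: 15-1} and Theorem~\ref{thm: 16}, is the honest way to close this; the paper's ``similarly'' leaves precisely that work implicit. Your alternative inductive unfolding for the strict cases is also sound but unnecessary once Theorems~\ref{thm: 18} and~\ref{thm: 19} are in hand, as the paper's one-line route shows. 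In short: same approach where the paper gives details, and a more careful treatment where the paper does not.
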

\begin{proof}
(1) $y<y'$ implies $0<y'-y$, so $0<x\left(y'-y\right)$ by Theorem
\ref{thm: 18}, implying $0<xy'-xy$. Hence, $xy<xy'$. 

(2) - (4) are proved similarly.
\end{proof}
We are done with the multiplication on the positive surreal numbers.
Let us now show $x0=0$ for all $x:\mathsf{No}_{>0}$, which we will
use in the proof of the following theorem. By the distributive law
and the above convention that $x\left(-y\right)\coloneqq-xy$, we
have $x0=x\left(1-1\right)=x1-\left(x1\right)=0$. By a \textit{monoid}
we mean a set $M$ with a binary relation $\cdot$ and an element
$1$ such that for all $x,y,z:M$,
\begin{enumerate}
\item $x\cdot1=x$;
\item $x\cdot\left(y\cdot z\right)=\left(x\cdot y\right)\cdot z$, 
\end{enumerate}
and we say $M$ is \textit{commutative} if for all $x,y:M$, $x\cdot y=y\cdot x$.
We will omit the dot and write $xy$ for $x\cdot y$.
\begin{thm}
$\mathsf{No}_{>0}$ is a commutative monoid.\end{thm}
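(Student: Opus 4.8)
The plan is to check the three axioms for $\left(\mathsf{No}_{>0},\cdot,1\right)$ from the definition of commutative monoid above: commutativity $xy=yx$, the unit law $x\cdot1=x$, and associativity $x\left(yz\right)=\left(xy\right)z$, for all $x,y,z:\mathsf{No}_{>0}$. Here $\mathsf{No}_{>0}$ is a set (a subtype of the set $\mathsf{No}$, Theorem~\ref{thm: 16-1}), closure of $\cdot$ on $\mathsf{No}_{>0}$ is Theorem~\ref{thm: 18}, and $1=\left\{ 0\mid\textrm{}\right\} :\mathsf{No}_{>0}$ since $0<1$. Each argument works from the Conway form
\[
xy=\left\{ x^{L}y+xy^{L}-x^{L}y^{L},\;x^{R}y-x^{R}y^{R}+xy^{R}\mid x^{L}y+xy^{R}-x^{L}y^{R},\;x^{R}y-x^{R}y^{L}+xy^{L}\right\}
\]
obtained in the proof of Theorem~\ref{thm: 19}, together with the distributive law (Theorem~\ref{thm: 19}), the abelian group laws for $\mathsf{No}$ (Theorem~\ref{thm: 17}, Lemma~\ref{lem: 19}), the identity $x0=0$ proved above, and the equality criterion (Lemma~\ref{lem: 16}). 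The Conway form says precisely that the options of a product $uv$ are the expressions $u^{a}v+uv^{b}-u^{a}v^{b}$ with $a,b\in\left\{ L,R\right\} $, such an expression being a \emph{left} option of $uv$ when $a=b$ and a \emph{right} option when $a\neq b$. I treat the axioms in the order commutativity, unit, associativity, because commutativity yields right distributivity $\left(u+v\right)w=uw+vw$, which is needed for associativity.

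Commutativity and the unit law are routine. For $xy=yx$ one inducts on $x,y$: the inductive hypothesis gives $x^{L}y=yx^{L}$, $xy^{L}=y^{L}x$, $x^{L}y^{L}=y^{L}x^{L}$ and the same with $R$, so, commuting the additions in $\mathsf{No}$, each left (resp.\ right) option of $xy$ equals a left (resp.\ right) option of $yx$ and conversely; hence $xy=yx$ by Lemma~\ref{lem: 16}. For the unit law, since $1=\left\{ 0\mid\textrm{}\right\} $ the definition of $x\cdot1$ reduces --- after cancelling the terms $\left(x-x^{L}\right)0$ and $\left(x^{R}-x\right)0$, both zero by $x0=0$ and Corollary~\ref{cor: 18} --- to $x\cdot1=\left\{ x^{L}\cdot1\mid x^{R}\cdot1\right\} $; an induction on $x$ then gives $x\cdot1=\left\{ x^{L}\mid x^{R}\right\} =x$, and $1\cdot x=x$ follows by commutativity.

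Associativity is the substantial case, proved by induction on $x,y,z$. A generic option of $\left(xy\right)z$ is built from an option $x^{a}y+xy^{b}-x^{a}y^{b}$ of $xy$ and an option $z^{c}$ of $z$; expanding it by right distributivity and then reassociating the resulting triple products through the inductive hypothesis (for instance $\left(x^{a}y\right)z=x^{a}\left(yz\right)$, $\left(xy^{b}\right)z=x\left(y^{b}z\right)$, $\left(xy\right)z^{c}=x\left(yz^{c}\right)$, $\left(x^{a}y^{b}\right)z^{c}=x^{a}\left(y^{b}z^{c}\right)$, each being associativity at a proper sub-triple of $\left(x,y,z\right)$) turns it into
\[
x^{a}\left(yz\right)+x\left(y^{b}z\right)+x\left(yz^{c}\right)+x^{a}\left(y^{b}z^{c}\right)-x^{a}\left(y^{b}z\right)-x^{a}\left(yz^{c}\right)-x\left(y^{b}z^{c}\right).
\]
A generic option of $x\left(yz\right)$, built from $x^{a}$ and an option $y^{b}z+yz^{c}-y^{b}z^{c}$ of $yz$, expands by distributivity alone into the very same expression. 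Finally one checks the signs: in both products this expression is a left option exactly when an even number of $a,b,c$ equal $R$, and a right option otherwise. Hence $\left(xy\right)z$ and $x\left(yz\right)$ have the same set of left options and the same set of right options, so $\left(xy\right)z=x\left(yz\right)$ by Lemma~\ref{lem: 16}.

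The main obstacle is the associativity bookkeeping: carrying out the two expansions carefully enough to recognize the common normal form, correctly classifying each of the several options produced as left or right (the number of indices among $a,b,c$ equal to $R$ must have the same parity on both sides), and making sure the applications of the distributive law to the intermediate three-term sums are legitimate --- here one uses the positivity of $x-x^{L}$, $x^{R}-x$, $y-y^{L}$, $y^{R}-y$ (Corollary~\ref{cor: 18}) and the ``distributive'' hypotheses stated earlier in this section rather than Theorem~\ref{thm: 19} verbatim. By contrast, commutativity and the unit law fall out as soon as the Conway form is in hand.
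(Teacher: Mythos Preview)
Your proposal is correct and follows essentially the same approach as the paper: induction on the relevant variables, using the Conway form of the product, $x0=0$, and Lemma~\ref{lem: 16}, exactly as the paper does (the paper's own proof handles $x\cdot1=x$ in detail and then dismisses associativity and commutativity with the one-line remark ``by induction on $x,y,z$ and on $x,y$, then by using Lemma~\ref{lem: 16} as in the proof of Theorem~\ref{thm: 19}''). Your write-up is in fact more careful than the paper's, in that you make explicit the need to establish commutativity before associativity (so that right distributivity is available) and flag the positivity side conditions required to invoke the ``distributive'' hypotheses on the intermediate sums.
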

\begin{proof}
We use induction on $x$. Note that $1=\left\{ 0|\textrm{}\right\} $.
Let $x$ be given by $\left\{ x^{L}|x^{R}\right\} $, and assume $x^{L}1=x^{L}$
for all $L$ and $x^{R}1=x^{R}$ for all $R$. By the above definition
of $xy$, we have 
\[
x1=\left\{ x^{L}1+\left(x-x^{L}\right)0|x^{R}1-\left(x^{R}-x\right)0\right\} =\left\{ x^{L}|x^{R}\right\} =x.
\]
 Note that the left option $B$ and right $C$ disappear in the expansion
of $x1$ above. If $B$ and $C$ are seen as outputs of functions
of four variables, then there is no output when $y^{R}=1^{R}$ since
$1$ has no right option.

The associative law is proved by induction on $x,y,z$ and the commutative
law by induction on $x,y$, then by using Lemma \ref{lem: 16} as
in the proof of Theorem \ref{thm: 19}.
\end{proof}
A \textit{commutative ring with identity }$1$ is a set $R$ with
two binary relations $+$ and $\cdot$, a unary operation $-$, two
constants $0$ and $1$ such that $\left(R,+,-,0\right)$ is an abelian
group, $\left(R,\cdot,1\right)$ is a commutative monoid, and the
distributive law holds. 
\begin{thm}
$\mathsf{No}_{>0}$ is a commutative ring with identity $1$.
\end{thm}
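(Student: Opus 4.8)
The plan is to check the three clauses in the definition of a commutative ring with identity: that multiplication together with $1$ gives a commutative monoid, that $(+,-,0)$ gives an abelian group, and that the distributive law holds. Two of these are already in hand: the commutative-monoid structure is the content of the immediately preceding theorem, and the distributive law $x(y+z)=xy+xz$ is Theorem \ref{thm: 19}. What remains is the additive abelian-group part, and here a point must be settled first, since $0$ and additive inverses do not lie in $\mathsf{No}_{>0}$: the constant $0$, the operation $-$, and the axioms governing them have to be read as inherited from the ambient group $(\mathsf{No},+,-,0)$ of Theorem \ref{thm: 17}, so the multiplication itself must first be made available on all of $\mathsf{No}$, not just on the positive cone.

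First I would extend multiplication from $\mathsf{No}_{>0}\times\mathsf{No}_{>0}\to\mathsf{No}_{>0}$ to a total operation on $\mathsf{No}\times\mathsf{No}\to\mathsf{No}$. The naive route --- split an arbitrary surreal into the cases $>0$, $=0$, $<0$ and paste the positive multiplication together with $x0:=0$ (proved just above the statement) and the convention $x(-y):=-xy$ --- is not available constructively, because $<$ on $\mathsf{No}$ is not trichotomous (the Corollary of Section 3). Instead I would rerun the inductive definition of this section directly over all of $\mathsf{No}$: for $x,y$ given by cuts, take the same four-term cut formula for $xy$, now interpreting each subtraction via negation on $\mathsf{No}$, and verify by the $\mathsf{No}$-induction principle that each left option lies strictly below each right option, imitating the proof of the first theorem of this section but quoting Lemma \ref{lem: 14-1}, Lemma \ref{lem: 15-1}, and Theorem \ref{thm: 16} in place of Corollary \ref{cor: 18} wherever positivity was previously invoked, and keeping Lemma \ref{lem: 15} for $x^{L}<x<x^{R}$.

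With a total, well-defined multiplication on $\mathsf{No}$, I would then confirm --- by induction on the cut presentations together with Lemma \ref{lem: 16}, exactly as in the proofs of Theorem \ref{thm: 19} and of the previous (monoid) theorem --- that $x1=x$, associativity, commutativity, and distributivity all persist on $\mathsf{No}$, while the remaining abelian-group axioms for $(+,-,0)$ are Theorem \ref{thm: 17}; restricting back to $\mathsf{No}_{>0}$ then yields the stated conclusion. I expect the main obstacle to be precisely this constructive extension past the positive cone: every estimate in the section --- the inequalities $A<C$, $A<D$, $B<C$, $B<D$ and the distributivity bookkeeping --- was carried out under hypotheses of the form $z>0$ or $z-z'>0$ and invoked Corollary \ref{cor: 18}, and without trichotomy those hypotheses cannot be recovered by a case split on sign, so the delicate work is redoing all of these estimates uniformly, never branching on the sign of a surreal.
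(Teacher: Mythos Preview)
The paper gives no proof of this theorem at all: it is stated immediately after the definition of ``commutative ring with identity'' and is meant to be read as a one-line summary of what has just been established --- the commutative-monoid structure (previous theorem), the distributive law (Theorem~\ref{thm: 19}), and the additive structure inherited from $\mathsf{No}$ (Theorem~\ref{thm: 17}). You are right that this is loose, since $0\notin\mathsf{No}_{>0}$ and $\mathsf{No}_{>0}$ is not closed under negation, so strictly speaking $(\mathsf{No}_{>0},+,-,0)$ is not an abelian group in the paper's own sense; the author is tacitly working in the ambient $\mathsf{No}$ for the additive part. But the only use made of this theorem downstream is to feed it into Theorem~\ref{thm: 26}, whose proof needs nothing more than the multiplicative monoid laws and distributivity on $\mathsf{No}_{>0}$ together with subtraction in the ambient group --- all of which are already in hand.

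Your proposal takes a long detour: you want to first extend the multiplication to all of $\mathsf{No}$ by rerunning the cut-induction, verify the ring axioms there, and then restrict. That extension is precisely the goal of the \emph{next} two sections of the paper, and the paper does it by a completely different device: it proves $\mathsf{No}\simeq\mathsf{No}_{>0}-\mathsf{No}_{>0}$ (Corollary~\ref{cor: 36}), puts the $(a-b)*(a'-b')\coloneqq aa'-ab'-ba'+bb'$ multiplication on the right-hand side (Theorem~\ref{thm: 26}), and transports it back (Theorems~\ref{thm: 40}--\ref{thm: 43}). The obstacle you name --- that every inequality in Section~8 used positivity hypotheses and cannot be redone by a sign case-split because $<$ is not trichotomous --- is real, and it is exactly why the paper does \emph{not} attempt your direct inductive extension. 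So your plan both over-shoots the present theorem and, where it overlaps with later material, heads into the very difficulty the paper's $\mathsf{No}_{>0}-\mathsf{No}_{>0}$ trick is designed to sidestep.
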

We prove the following theorem, which will be of use in the next section:
\begin{thm}
Let $R$ be a commutative ring with identity $1$. Let $R-R$ be the
set $\left\{ a-b|a,b:R\right\} $. If $R-R$ has a multiplication
defined as $\left(a-b\right)*\left(a'-b'\right)\coloneqq aa'-ab'-ba'+bb'$,
then $\left(R-R,*,1\right)$ is a commutative monoid. \label{thm: 26}\end{thm}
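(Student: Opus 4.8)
The plan is to recognize that the operation $*$ is nothing but the restriction to $R-R$ of the ring multiplication $\cdot$ of $R$, which collapses the theorem to the fact, already available by hypothesis, that $(R,\cdot,1)$ is a commutative monoid. First I would prove the single key identity: for all $a,b,a',b':R$,
\[
(a-b)*(a'-b')=(a-b)\cdot(a'-b').
\]
This is a direct computation from the distributive law (part of the definition of a commutative ring) together with Lemma \ref{lem: 19}: expanding, $(a-b)\cdot(a'-b')=a\cdot(a'-b')-b\cdot(a'-b')=(aa'-ab')-(ba'-bb')$, and then Lemma \ref{lem: 19} rewrites $-(ba'-bb')$ as $-ba'+bb'$, so after reassociating and commuting the sum in the abelian group $(R,+,-,0)$ we obtain $aa'-ab'-ba'+bb'$, which is $(a-b)*(a'-b')$ by definition. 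Note that commutativity of $\cdot$ is not even needed for this step.

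Two things follow immediately. Since the right-hand side of the identity depends only on the elements $a-b$ and $a'-b'$ of $R$, not on the chosen representatives $a,b,a',b'$, the operation $*$ is well-defined on $R-R$; this is the one point that genuinely requires checking, and the identity disposes of it. Second, because $0:R$ we have $r=r-0$ for every $r:R$, so $R-R$ is, as a subset of the set $R$, all of $R$ (hence itself a set); in particular $1=1-0\in R-R$, and $R-R$ is closed under $*$ since $(a-b)*(a'-b')=(a-b)\cdot(a'-b'):R$.

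It then remains to verify the monoid axioms, each of which transports through the identity to the corresponding statement for $(R,\cdot,1)$, valid since $R$ is a commutative ring with identity $1$. Concretely: $(a-b)*1=(a-b)*(1-0)=(a-b)\cdot 1=a-b$ gives the unit law; associativity follows from $((a-b)*(c-d))*(e-f)=((a-b)\cdot(c-d))\cdot(e-f)=(a-b)\cdot((c-d)\cdot(e-f))=(a-b)*((c-d)*(e-f))$; and commutativity from $(a-b)*(c-d)=(a-b)\cdot(c-d)=(c-d)\cdot(a-b)=(c-d)*(a-b)$. The main obstacle here is conceptual rather than computational: once one sees that $*$ coincides with $\cdot$, there is nothing left to do, whereas without that observation one is pushed into a longer direct expansion of $(aa'-ab'-ba'+bb')*(e-f)$ to check associativity by hand — feasible, but needlessly laborious.
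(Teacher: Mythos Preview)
Your proof is correct for the theorem as stated, and it is genuinely different from the paper's argument. The paper never identifies $*$ with the ambient ring multiplication $\cdot$; instead it verifies closure by rewriting the product as $(aa'+bb')-(ab'+ba')$, checks the unit law via the representation $1=(1+1)-1$, and then grinds through associativity by a full eight-term expansion. Your observation that $(a-b)*(a'-b')=(a-b)\cdot(a'-b')$ collapses all of this to the monoid axioms already satisfied by $(R,\cdot,1)$, and as a bonus settles well-definedness of $*$ with respect to the representation $a-b$, a point the paper passes over.

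What the paper's longer route buys is robustness in the intended application. The theorem is invoked with $R=\mathsf{No}_{>0}$, which the paper calls a ``commutative ring'' but which does not contain $0$ or additive inverses; there, $R-R$ is genuinely larger than $R$, and the ambient $\cdot$ is not available on differences, so your reduction $*=\cdot$ would not go through. This is why the paper writes $1=(1+1)-1$ rather than your $1=1-0$, and why it insists on expressing each product as a difference of two \emph{elements of $R$}. So: your argument is cleaner and fully adequate for the hypothesis actually written, while the paper's computation is tacitly aimed at a weaker hypothesis (something like a commutative semiring embedded in an abelian group) that matches how the result is later used.
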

\begin{proof}
Note that $\left(a-b\right)*\left(a'-b'\right)$ can be written as
$\left(aa'+bb'\right)-\left(ab'+ba'\right)$, so $R-R$ is closed
under $*$. We now prove the monoidal laws: 
\begin{enumerate}
\item Note also that $1=\left(1+1\right)-1$, so $\left(a-b\right)*1=\left(a-b\right)*\left(\left(1+1\right)-1\right)=a\left(1+1\right)-a1-b\left(1+1\right)+b1=a-b$.
\item For the associative law, we will omit $*$:
\[
\begin{array}{ccc}
\left(a-b\right)\left[\left(a'-b'\right)\left(a''-b''\right)\right] & = & \left(a-b\right)\left[a'a''-a'b''-b'a''+b'b''\right]\\
 & = & \left(a-b\right)\left[\left(a'a''+b'b''\right)-\left(a'b''+b'a''\right)\right]\\
 & = & a\left(a'a''+b'b''\right)-a\left(a'b''+b'a''\right)-b\left(a'a''+b'b''\right)+b\left(a'b''+b'a''\right)\\
 & = & a\left(a'a''\right)+a\left(b'b''\right)-a\left(a'b''\right)-a\left(b'a''\right)-b\left(a'a''\right)-b\left(b'b''\right)+b\left(a'b''\right)+b\left(b'a''\right)\\
 & = & \left(aa'\right)a''+\left(ab'\right)b''-\left(aa'\right)b''-\left(ab'\right)a''-\left(ba'\right)a''-\left(bb'\right)b'+\left(ba'\right)b''+\left(bb'\right)a''\\
 & = & \left[aa'-ab'-ba'+bb'\right]\left(a''-b''\right)\\
 & = & \left[\left(a-b\right)\left(a'-b'\right)\right]\left(a''-b''\right).
\end{array}
\]
 
\item The commutative law follows from the commutative law on $R$.
\end{enumerate}
\end{proof}
For a commutative ring $R$ with identity, a natural addition on $R-R$,
as defined in Theorem \ref{thm: 26}, is the addition on $R$ since
each element of $R-R$ is an element of $R$. With that addition,
we have 
\begin{thm}[Distributive Law]
For all $a-b,a'-b',a''-b'':R-R$, $\left(a-b\right)*\left[\left(a'-b'\right)+\left(a''-b''\right)\right]=\left(a-b\right)*\left(a'-b'\right)+\left(a-b\right)*\left(a''-b''\right)$,
where $*$ is as defined in Theorem \ref{thm: 26}. \label{thm: 27}\end{thm}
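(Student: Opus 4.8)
The plan is to prove this by a direct expansion, using only the definition of $*$ recorded in Theorem \ref{thm: 26}, the distributive law in $R$, and the observation (made just before the statement) that the addition on $R-R$ is the addition inherited from $R$. First I would fix a representative for the left-hand sum: since $(a'-b')+(a''-b'') = (a'+a'')-(b'+b'')$ in $R$, this element of $R-R$ is presented in the form $c-d$ with $c\coloneqq a'+a''$ and $d\coloneqq b'+b''$, and I would apply the definition of $*$ to $(a-b)*(c-d)$. This yields
\[
(a-b)*\bigl[(a'-b')+(a''-b'')\bigr] = a(a'+a'') - a(b'+b'') - b(a'+a'') + b(b'+b''),
\]
where the four products on the right are ordinary products in $R$.

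Next I would expand each of these four products by the distributive law in $R$ and then rearrange the eight resulting summands, using associativity and commutativity of $+$ in $R$, into the form
\[
aa' + aa'' - ab' - ab'' - ba' - ba'' + bb' + bb''.
\]
For the right-hand side of the claimed identity, the definition of $*$ gives $(a-b)*(a'-b') = aa'-ab'-ba'+bb'$ and $(a-b)*(a''-b'') = aa''-ab''-ba''+bb''$; adding these in $R$ produces exactly the same eight-term expression. Comparing the two, the distributive law for $*$ follows, and this is where I would invoke Lemma \ref{lem: 19}-style manipulations only implicitly, through the ambient abelian group structure of $R$.

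I do not expect any genuine obstacle here: the argument is essentially bookkeeping. The two points that require a little care are (i) exhibiting the sum $(a'-b')+(a''-b'')$ as an element of $R-R$ in the form $c-d$ so that the definition of $*$ can be applied to it, and (ii) noting that, once $*$ has been unfolded, all the products that appear are honest products in the ring $R$, which is what licenses the repeated use of the distributive law of $R$. Commutativity of $R$ is in fact not needed for this particular theorem---only its additive group structure together with distributivity.
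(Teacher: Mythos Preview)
Your proposal is correct and follows essentially the same route as the paper: rewrite the sum as $(a'+a'')-(b'+b'')$, apply the definition of $*$, expand via the distributive law in $R$, and compare with the expansion of the right-hand side. Your added remark that commutativity of $R$ is not actually needed here is accurate and a nice observation beyond what the paper records.
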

\begin{proof}
We will omit $*$. Note that $\left(a'-b'\right)+\left(a''-b''\right)=\left(a'+a''\right)-\left(b'+b''\right)$
since $R$ is a ring, so 
\[
\begin{array}{ccc}
\left(a-b\right)\left[\left(a'-b'\right)+\left(a''-b''\right)\right] & = & \left(a-b\right)\left[\left(a'+a''\right)-\left(b'+b''\right)\right]\\
 & = & a\left(a'+a''\right)-a\left(b'+b''\right)-b\left(a'+a''\right)+b\left(b'+b''\right)\\
 & = & aa'+aa''-ab'-ab''-ba'-ba''+bb'+bb''\\
 & = & \left(a-b\right)\left(a'-b'\right)+\left(a-b\right)\left(a''-b''\right).
\end{array}
\]

\end{proof}

\section{$\mathsf{No}$ as Differences of Positive Surreal Numbers}
\begin{lem}
For each $x:\mathsf{No}$, there is $n:\mathsf{Oz}$ such that $0<n$
and $x<n$. \label{lem: 21}\end{lem}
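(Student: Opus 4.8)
The plan is to apply the Weak-Archimedean Condition to $x$, obtain an omnific integer bounding $x$ from above, and then correct it by a unit translation so that the bound becomes strict, invoking the trichotomy built into the definition of $\mathsf{Oz}$ to choose a \emph{positive} witness constructively.

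Concretely, I would first use the Weak-Archimedean Condition to get $n:\mathsf{Oz}$ with $n\le x\le n+1$. Since $\mathsf{Oz}$ is closed under $\pm1$ and $0$ lies in $\mathsf{Oz}$ (it is a limit ordinal), the surreal numbers $n+1$, $n+2=(n+1)+1$, and $1=0+1$ all belong to $\mathsf{Oz}$. Next I would promote the non-strict bound to a strict one: from $0<1$ (Lemma \ref{lem: 15} applied to $1=\{0|\textrm{}\}$) and Lemma \ref{lem: 14} we get $n+1<n+2$, and combining with $x\le n+1$ via Theorem \ref{thm: 16}(2) (recall $\mathsf{No}$ is an ordered abelian group by Corollary \ref{cor: 18}) gives $x<n+2$.

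Finally, since trichotomy holds on $\mathsf{Oz}$, I would compare the omnific integers $1$ and $n+2$. If $1=n+2$ or $1<n+2$, then $0<n+2$ (using transitivity of $<$ on $\mathsf{No}$ in the second case), while $x<n+2$ is already established, so $n+2$ is the required omnific integer. If instead $n+2<1$, then transitivity gives $x<n+2<1$, and $0<1$, so $1$ is the required omnific integer. In every branch we have exhibited an $m:\mathsf{Oz}$ with $0<m$ and $x<m$, which is the claim.

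I do not expect a real obstacle. The only point needing care is that the Weak-Archimedean Condition yields a bound that is merely non-strict and of unknown sign; one must therefore shift by one more unit to gain strictness and then lean on the trichotomy of $\mathsf{Oz}$ — part of its very definition, hence available without excluded middle — to select a positive omnific integer above $x$.
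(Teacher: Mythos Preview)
Your argument is correct and follows essentially the same route as the paper: apply the Weak-Archimedean Condition, shift by one unit to turn the non-strict upper bound into a strict one, and then use the trichotomy built into $\mathsf{Oz}$ to pick a positive witness. The only cosmetic difference is that the paper performs the trichotomy comparison of $m+2$ against $0$ rather than against $1$, which amounts to the same case split.
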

\begin{proof}
Let $x:\mathsf{No}$. By the Weak-Archimedean Condition%
\footnote{See Section 5.%
}, there is $m:\mathsf{No}$ such that $x\leq m+1$. If $m+2<0$ or
$m+2=0$, let $n=1$. Otherwise, let $n=m+2$.\end{proof}
\begin{thm}
Each surreal number is the difference of two positive surreal numbers.
\label{thm: 35}\end{thm}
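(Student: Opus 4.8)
The plan is to realize an arbitrary $x:\mathsf{No}$ as $n-(n-x)$ for a sufficiently large positive omnific integer $n$, so that both $n$ and $n-x$ land in $\mathsf{No}_{>0}$. First I would apply Lemma~\ref{lem: 21} to $x$ to obtain $n:\mathsf{Oz}$ with $0<n$ and $x<n$. Since $\mathsf{Oz}$ is a subtype of $\mathsf{No}$ and $0<n$, the element $n$ is a positive surreal number, i.e.\ $n:\mathsf{No}_{>0}$.

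Next I would show that $n-x$ is positive: from $x<n$, adding $-x$ to both sides via Lemma~\ref{lem: 14} (equivalently Corollary~\ref{cor: 18}) gives $0=x+(-x)<n+(-x)=n-x$, so $n-x:\mathsf{No}_{>0}$ as well.

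Finally I would verify the arithmetic identity $x=n-(n-x)$. By Lemma~\ref{lem: 19}, $-(n-x)=-n+x$; then, using associativity, commutativity, $n+(-n)=0$, and $0+x=x$ from Theorem~\ref{thm: 17}, one computes $n-(n-x)=n+(-n+x)=(n+(-n))+x=0+x=x$. This exhibits $x$ as the difference of the two positive surreal numbers $n$ and $n-x$, which is exactly the claim.

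There is no genuine obstacle here; the only point requiring any care is that the cancellation $x=n-(n-x)$ is not a definitional triviality but rests on the abelian group structure of $\mathsf{No}$ established in Theorem~\ref{thm: 17} together with Lemma~\ref{lem: 19}. One may also remark that the specific $n$ furnished by Lemma~\ref{lem: 21} is irrelevant — any positive surreal upper bound of $x$ works — so the representation is highly non-unique.
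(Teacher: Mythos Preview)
Your proof is correct and follows essentially the same route as the paper's: apply Lemma~\ref{lem: 21} to get a positive omnific integer $n$ with $x<n$, use the ordered abelian group structure (Corollary~\ref{cor: 18}) to conclude $0<n-x$, and invoke Lemma~\ref{lem: 19} for the identity $x=n-(n-x)$. The only difference is that you spell out the final arithmetic cancellation in slightly more detail than the paper does.
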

\begin{proof}
Let $x:\mathsf{No}$. By Lemma \ref{lem: 21}, there is a positive
omnific integer $n$ such that $x<n$, so $0<n-x$ by Corollary \ref{cor: 18}.
Then $x=n-\left(n-x\right)$ by Lemma \ref{lem: 19}.
\end{proof}
In the proof of Theorem \ref{thm: 35}, the function $f:\mathsf{No}\rightarrow\mathsf{No}_{>0}-\mathsf{No}_{>0}$
given by $x\mapsto n-\left(n-x\right)$ is an equivalence. To see
this, the function $g:\mathsf{No}_{>0}-\mathsf{No}_{>0}\rightarrow\mathsf{No}$
given by $a-b\mapsto a-b$ is simply substraction in $\mathsf{No}$,
so $gf\left(x\right)=x$ for all $x:\mathsf{No}$, a consequence of
Lemma \ref{lem: 19}, and $fg\left(a-b\right)=a-b$ for all $a-b:\mathsf{No}_{>0}-\mathsf{No}_{>0}$.
Hence, we have
\begin{cor}
$\mathsf{No}\simeq\mathsf{No}_{>0}-\mathsf{No}_{>0}$. \label{cor: 36}
\end{cor}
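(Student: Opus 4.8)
The plan is simply to assemble the observations made in the paragraph preceding the corollary into a verification that $f$ and $g$ are mutually quasi-inverse, which is the notion of equivalence already used in the proof of Theorem~\ref{thm: 16-1}. Here I read $\mathsf{No}_{>0}-\mathsf{No}_{>0}$, in analogy with $R-R$ in Theorem~\ref{thm: 26}, as the subtype of $\mathsf{No}$ whose elements are those $z$ that are \emph{merely} of the form $a-b$ with $a,b:\mathsf{No}_{>0}$; the predicate ``$z$ is a difference of two positive surreal numbers'' is a mere proposition, so $g:\mathsf{No}_{>0}-\mathsf{No}_{>0}\to\mathsf{No}$, being essentially the first projection, is an embedding, and the substance of the corollary is that this predicate holds for every $z$, which is precisely Theorem~\ref{thm: 35}.

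First I would pin down $f:\mathsf{No}\to\mathsf{No}_{>0}-\mathsf{No}_{>0}$ as an honest function. Given $x:\mathsf{No}$, Lemma~\ref{lem: 21} supplies a positive omnific integer $n$ with $x<n$, whence $n-x>0$ by Corollary~\ref{cor: 18} and $x=n-(n-x)$ by Lemma~\ref{lem: 19}; this exhibits $x$ as a difference of two positive surreal numbers, and I let $f(x)$ be $x$ paired with the resulting (truncated) witness. The one point that needs care --- and the main obstacle --- is that Lemma~\ref{lem: 21} only asserts the \emph{mere} existence of such an $n$, so a priori this does not give a function; it does, however, because the second component of $f(x)$ is required only to inhabit a mere proposition, so the dependence on the chosen $n$ is washed out by the truncation and $f$ is well defined.

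It then remains to check $g\circ f\sim\mathrm{id}$ and $f\circ g\sim\mathrm{id}$. For the first, $g(f(x))$ is the first component of $f(x)$, namely $x$, so the homotopy is pointwise reflexivity (equivalently, $n-(n-x)=x$ by Lemma~\ref{lem: 19}). For the second, given $(z,p)$ in $\mathsf{No}_{>0}-\mathsf{No}_{>0}$ we have $g(z,p)=z$ and $f(z)=(z,p')$ with $p'$ the witness constructed above; since $p$ and $p'$ both inhabit a mere proposition, $p=p'$, and because $\mathsf{No}$ is a set (Theorem~\ref{thm: 16-1}) equality of pairs in this subtype reduces to equality of first components, which is reflexivity. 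With both homotopies in hand, $g$ (equivalently $f$) is an equivalence, so $\mathsf{No}\simeq\mathsf{No}_{>0}-\mathsf{No}_{>0}$, exactly as in the closing step of the proof of Theorem~\ref{thm: 16-1}. Everything beyond the well-definedness of $f$ is a direct unwinding of definitions.
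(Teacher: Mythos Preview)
Your proposal is correct and follows essentially the same route as the paper: define $f$ and $g$ as in the paragraph preceding the corollary and verify they are mutually quasi-inverse. You are simply more careful than the paper about what $\mathsf{No}_{>0}-\mathsf{No}_{>0}$ means as a type and about the well-definedness of $f$ given that Lemma~\ref{lem: 21} only yields mere existence of $n$; the paper glosses over both points.
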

Recall that $\mathsf{No}$ is a set by Theorem \ref{thm: 16-1}. We
will also show that $\mathsf{No}_{>0}-\mathsf{No}_{>0}$ is a set
through that equivalence, but we need the following:
\begin{thm}[\cite{key-24}, p. 118]
Let $A,B$ be types. If $f:A\rightarrow B$ is an equivalence, then,
for all $a,a':A$, the function $h:\left(a=a'\right)\rightarrow\left(f\left(a\right)=f\left(a'\right)\right)$
in Lemma \ref{lem: 15-2} is an equivalence. \end{thm}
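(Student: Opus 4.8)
The plan is to show directly that the map of Lemma \ref{lem: 15-2} attached to $f$ has a two-sided homotopy inverse. Since $f$ is an equivalence, fix $g:B\to A$ with homotopies $\alpha:f\circ g\sim\mathrm{id}_{B}$ and $\beta:g\circ f\sim\mathrm{id}_{A}$. For a function $\varphi$ between two types write $\varphi_{*}$ for the map on path types from Lemma \ref{lem: 15-2}, so that $h=f_{*}:(a=a')\to(f(a)=f(a'))$; I will also write $p^{-1}$ for the inverse of a path $p$ and $\cdot$ for path concatenation. The candidate inverse of $f_{*}$ is
\[
k:(f(a)=f(a'))\to(a=a'),\qquad k(q):=\beta_{a}^{-1}\cdot g_{*}(q)\cdot\beta_{a'}.
\]

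First I would record three auxiliary facts, each by induction on paths in the style of Lemma \ref{lem: 15-2}: (i) \emph{functoriality}, $(\psi\circ\varphi)_{*}(p)=\psi_{*}(\varphi_{*}(p))$, together with $\varphi_{*}(p^{-1})=\varphi_{*}(p)^{-1}$ and $\varphi_{*}(p\cdot p')=\varphi_{*}(p)\cdot\varphi_{*}(p')$; (ii) \emph{naturality} of a homotopy $\theta:\varphi\sim\psi$, i.e. $\varphi_{*}(p)\cdot\theta_{x'}=\theta_{x}\cdot\psi_{*}(p)$ for every $p:x=x'$, which in the case $\psi=\mathrm{id}$ reads $\varphi_{*}(p)\cdot\theta_{x'}=\theta_{x}\cdot p$; and (iii) the groupoid laws for concatenation (associativity, units, cancellation of inverses), which in particular make conjugation $p\mapsto r\cdot p\cdot s^{-1}$ by fixed paths an equivalence with inverse $p\mapsto r^{-1}\cdot p\cdot s$. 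Using (i) and (ii), for $p:a=a'$ one computes
\[
k(f_{*}(p))=\beta_{a}^{-1}\cdot g_{*}(f_{*}(p))\cdot\beta_{a'}=\beta_{a}^{-1}\cdot(g\circ f)_{*}(p)\cdot\beta_{a'}=\beta_{a}^{-1}\cdot\beta_{a}\cdot p=p,
\]
so $k\circ f_{*}\sim\mathrm{id}$; note this half uses only naturality of $\beta$ and no relation between $\alpha$ and $\beta$.

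For the other composite, naturality of $\alpha$ gives $(f\circ g)_{*}(q)=\alpha_{f(a)}\cdot q\cdot\alpha_{f(a')}^{-1}$, so by (i)
\[
f_{*}(k(q))=f_{*}(\beta_{a})^{-1}\cdot(f\circ g)_{*}(q)\cdot f_{*}(\beta_{a'})=f_{*}(\beta_{a})^{-1}\cdot\alpha_{f(a)}\cdot q\cdot\alpha_{f(a')}^{-1}\cdot f_{*}(\beta_{a'}).
\]
This collapses to $q$ exactly when the triangle identity $\alpha_{f(x)}=f_{*}(\beta_{x})$ holds, and \emph{this is the one point that is not free}: the data $(g,\alpha,\beta)$ is only a quasi-inverse, and a quasi-inverse need not be coherent in this sense. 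I expect this to be the main obstacle. The standard fix is to adjointify: keeping $g$ fixed, replace one of the two homotopies by a coherent modification so that $\alpha_{f(x)}=f_{*}(\beta_{x})$ holds (the half-adjoint-equivalence construction of \cite{key-24}); since the computation of $k\circ f_{*}$ above used only naturality, it is unaffected by such a replacement, and the displayed computation then yields $f_{*}(k(q))=q$.

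Alternatively one can sidestep the coherence altogether. By (i)--(iii), $(g\circ f)_{*}=g_{*}\circ f_{*}$ is conjugation by $\beta$, hence an equivalence, and likewise $(f\circ g)_{*}=f_{*}\circ g_{*}$, taken between the appropriate path types, is conjugation by $\alpha$ and an equivalence. Applying this to the three instances of the $(-)_{*}$-construction between $(a=a')$, $(f(a)=f(a'))$ and $(g(f(a))=g(f(a')))$ shows that the middle instance $g_{*}$ has both a section and a retraction, hence is an equivalence, and then $f_{*}=g_{*}^{-1}\circ(g\circ f)_{*}$ is a composite of equivalences. Either route completes the argument; I would likely present the first, since it exhibits $k$ explicitly, and remark that the second avoids adjointification.
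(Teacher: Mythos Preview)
The paper does not supply its own proof of this theorem: it is stated with the citation \cite{key-24}, p.~118 and then used as a black box to obtain Corollary~\ref{cor: 38}. So there is nothing in the paper to compare your argument against beyond the bare reference.

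Your proof is correct and is essentially the standard one from the cited source. Both routes you sketch work: the first is exactly the half-adjoint argument that \cite{key-24} uses (and you are right to flag that the triangle identity is the only non-trivial point, resolved by adjointification); the second is the ``2-out-of-3/2-out-of-6''-style argument that shows the intermediate $g_{*}$ is an equivalence by exhibiting it as both a retract and a section, whence $f_{*}$ is a composite of equivalences. Either would be acceptable here, and your presentation is clear about where the coherence issue lies and how to resolve or circumvent it.
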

\begin{cor}
Let $A,B$ be types. If $f:A\rightarrow B$ is an equivalence, then
$A$ is a set if and only if $B$ is a set. \label{cor: 38}\end{cor}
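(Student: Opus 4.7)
The plan is to prove both directions using the preceding theorem, which says that when $f:A\to B$ is an equivalence, the induced map $h_{a,a'}:(a=a')\to(f(a)=f(a'))$ is itself an equivalence for all $a,a':A$. Being a mere proposition is preserved along equivalences (if $X\simeq Y$ and $Y$ is a mere proposition, then for $x_1,x_2:X$ one gets $x_1=x_2$ by transporting the equality of their images), so the strategy is to convert equality types in one type into equality types in the other and invoke the hypothesis.

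For the forward direction, suppose $A$ is a set and let $b,b':B$. Since $f$ is an equivalence, it has a quasi-inverse $f^{-1}:B\to A$, which is itself an equivalence. Applying the previous theorem to $f^{-1}$ gives an equivalence
\[
h_{b,b'}:(b=b')\simeq\bigl(f^{-1}(b)=f^{-1}(b')\bigr).
\]
The right-hand side is an equality in $A$, hence a mere proposition by the hypothesis that $A$ is a set. Transporting along $h_{b,b'}$ shows that $(b=b')$ is also a mere proposition, so $B$ is a set.

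The backward direction is symmetric: assuming $B$ is a set and taking $a,a':A$, apply the preceding theorem directly to $f$ to get
\[
(a=a')\simeq\bigl(f(a)=f(a')\bigr),
\]
and conclude that $(a=a')$ is a mere proposition because the right-hand side is one.

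There is no real obstacle here; the only thing to be slightly careful about is invoking the preceding theorem in the forward direction with $f^{-1}$ rather than $f$, which requires noting that the inverse of an equivalence is again an equivalence. Everything else is the transport of the mere-proposition property along an equivalence, which is a standard fact from the univalent framework already used throughout the paper.
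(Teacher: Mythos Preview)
Your proposal is correct and takes essentially the same approach as the paper: both use the preceding theorem to identify the identity types across the equivalence and then use that mere-proposition-hood transfers along equivalences. The paper only spells out the backward direction (taking $p,q:x=y$, noting $h(p)=h(q)$ in $B$, and applying $h^{-1}$) and dismisses the forward direction with ``proved similarly''; your explicit use of $f^{-1}$ for that direction, together with the remark that inverses of equivalences are equivalences, is exactly the content hidden behind that phrase.
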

\begin{proof}
Let $B$ be a set. Let $x,y:A$ and $p,q:x=y$. Then $h\left(p\right)=h\left(q\right)$
in $f\left(x\right)=f\left(y\right)$, so $p=q$ by applying the inverse
of $h$ on $p,q$. The converse is proved similarly. \end{proof}
\begin{cor}
$\mathsf{No}_{>0}-\mathsf{No}_{>0}$ is a set. \end{cor}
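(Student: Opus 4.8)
The plan is to invoke the equivalence established just above together with the fact that $\mathsf{No}$ is already known to be a set. Specifically, Corollary \ref{cor: 36} gives an equivalence $\mathsf{No}\simeq\mathsf{No}_{>0}-\mathsf{No}_{>0}$, realized concretely by the map $f:\mathsf{No}\rightarrow\mathsf{No}_{>0}-\mathsf{No}_{>0}$, $x\mapsto n-\left(n-x\right)$, with inverse $g:a-b\mapsto a-b$. By Theorem \ref{thm: 16-1}, $\mathsf{No}$ is a set. So the single substantive move is to transport the set structure across the equivalence.

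First I would recall Corollary \ref{cor: 38}: for any types $A,B$ and any equivalence $f:A\rightarrow B$, $A$ is a set if and only if $B$ is a set. Then I would instantiate this with $A=\mathsf{No}$, $B=\mathsf{No}_{>0}-\mathsf{No}_{>0}$, and the equivalence $f$ from Corollary \ref{cor: 36}. Since $\mathsf{No}$ is a set by Theorem \ref{thm: 16-1}, the "only if" direction of Corollary \ref{cor: 38} immediately yields that $\mathsf{No}_{>0}-\mathsf{No}_{>0}$ is a set.

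There is essentially no obstacle here: the work has all been done in Corollaries \ref{cor: 36} and \ref{cor: 38} and Theorem \ref{thm: 16-1}. The only thing to be careful about is that the equivalence in Corollary \ref{cor: 36} genuinely goes \emph{from} $\mathsf{No}$ (the type we know is a set) \emph{to} $\mathsf{No}_{>0}-\mathsf{No}_{>0}$, so that we are applying the correct direction of the biconditional in Corollary \ref{cor: 38}; but since that biconditional is symmetric, even this is a non-issue. I would therefore write the proof as a one-line appeal:

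\begin{proof}
By Corollary \ref{cor: 36}, there is an equivalence $\mathsf{No}\simeq\mathsf{No}_{>0}-\mathsf{No}_{>0}$. Since $\mathsf{No}$ is a set by Theorem \ref{thm: 16-1}, it follows from Corollary \ref{cor: 38} that $\mathsf{No}_{>0}-\mathsf{No}_{>0}$ is a set.
\end{proof}
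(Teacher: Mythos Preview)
Your proof is correct and takes essentially the same approach as the paper: the paper's own proof is the one-line citation ``By Theorem \ref{thm: 16-1} and Corollaries \ref{cor: 36} and \ref{cor: 38},'' which is exactly the combination you invoke.
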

\begin{proof}
By Theorem \ref{thm: 16-1} and Corollaries \ref{cor: 36} and \ref{cor: 38}.
\end{proof}

\section{Multiplication on $\mathsf{No}$}

We now are about to define a multiplication on $\mathsf{No}$ and
show that $\mathsf{No}$ is a commutative monoid. 
\begin{thm}
$\mathsf{No}_{>0}-\mathsf{No}_{>0}$ is a commutative monoid. \label{thm: 40}\end{thm}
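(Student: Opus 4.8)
The plan is to read this off as an instance of Theorem~\ref{thm: 26}. We have already proved that $\mathsf{No}_{>0}$ is a commutative ring with identity $1$, so instantiating $R \coloneqq \mathsf{No}_{>0}$ in Theorem~\ref{thm: 26} yields that $\left(\mathsf{No}_{>0} - \mathsf{No}_{>0}, *, 1\right)$ is a commutative monoid, where $\left(a - b\right) * \left(a' - b'\right) \coloneqq aa' - ab' - ba' + bb'$ and the unit is $1 = \left(1 + 1\right) - 1$ viewed as an element of $\mathsf{No}_{>0} - \mathsf{No}_{>0}$. So at the level of algebra there is essentially nothing left to do beyond the citation.

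The one point I would take care to spell out is that this $*$ is genuinely a well-defined binary operation on $\mathsf{No}_{>0} - \mathsf{No}_{>0}$, not merely on pairs of representatives. Expanding $\left(a - b\right)\left(a' - b'\right)$ by the distributive law in $\mathsf{No}_{>0}$ shows that $\left(a - b\right) * \left(a' - b'\right)$ is exactly the product of the surreal numbers $a - b$ and $a' - b'$ under the ring multiplication of $\mathsf{No}_{>0}$; hence it depends only on the differences themselves, and rewriting it as $\left(aa' + bb'\right) - \left(ab' + ba'\right)$ exhibits it as an element of $\mathsf{No}_{>0} - \mathsf{No}_{>0}$, so the carrier is closed under $*$. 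This is the content of the first line of the proof of Theorem~\ref{thm: 26}, now applied to $R = \mathsf{No}_{>0}$, and it is also what makes $*$ agree with the multiplication one wants to transport onto $\mathsf{No}$ along Corollary~\ref{cor: 36}.

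Finally, ``commutative monoid'' in our sense also requires the carrier to be a set; this is precisely the corollary just established that $\mathsf{No}_{>0} - \mathsf{No}_{>0}$ is a set, obtained from $\mathsf{No}$ being a set (Theorem~\ref{thm: 16-1}) transported along the equivalence $\mathsf{No} \simeq \mathsf{No}_{>0} - \mathsf{No}_{>0}$ of Corollary~\ref{cor: 36}. The unit, associativity, and commutativity laws are then delivered verbatim by Theorem~\ref{thm: 26}. I do not anticipate a real obstacle here: everything substantive — that $\mathsf{No}_{>0}$ is a commutative ring, that differences of positive surreals form a set, and that $R - R$ is a commutative monoid for any commutative ring $R$ with identity — has already been proved, so the argument is a short chain of invocations rather than a new computation.
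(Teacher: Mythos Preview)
Your approach is correct and matches the paper's exactly: the paper's entire proof is the single line ``By Theorem~\ref{thm: 26},'' i.e., instantiate $R \coloneqq \mathsf{No}_{>0}$. Your additional remarks about well-definedness and the set condition are more careful than the paper itself, though note that the sentence invoking ``the ring multiplication of $\mathsf{No}_{>0}$'' on $a-b$ and $a'-b'$ is slightly off since those differences need not lie in $\mathsf{No}_{>0}$; the well-definedness is better argued directly from the abelian group laws (if $a-b=c-d$ then $a+d=b+c$, and a short computation shows the two outputs agree).
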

\begin{proof}
By Theorem \ref{thm: 26}.\end{proof}
\begin{thm}
Let $A,B$ be sets. If $f:A\rightarrow B$ is an equivalence, then
$A$ is a commutative monoid if and only if $B$ is a commutative
monoid. \label{thm: 41}\end{thm}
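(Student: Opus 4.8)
The plan is to transport the entire monoid structure along the equivalence $f$ and check that the defining laws survive. Write $m_A:A\times A\to A$ for the multiplication on $A$, $1_A:A$ for its identity, and let $g:B\to A$ be the inverse of $f$ with homotopies $g\circ f\sim\mathrm{id}_A$ and $f\circ g\sim\mathrm{id}_B$. Suppose first that $A$ is a commutative monoid. Define a multiplication on $B$ by $m_B(u,v)\coloneqq f\bigl(m_A(g(u),g(v))\bigr)$ and an identity $1_B\coloneqq f(1_A)$. Both $A$ and $B$ are sets by hypothesis, so these structures automatically satisfy the only truncation requirement in the definition of a monoid (that the underlying type be a set).

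Next I would verify the three axioms for $B$ by pulling each back to $A$. For the unit law: $m_B(u,1_B)=f\bigl(m_A(g(u),g(f(1_A)))\bigr)=f\bigl(m_A(g(u),1_A)\bigr)$ using $g\circ f\sim\mathrm{id}_A$, and this equals $f(g(u))=u$ by the unit law in $A$ together with $f\circ g\sim\mathrm{id}_B$; here I am using Lemma \ref{lem: 15-2} to turn the equalities $g(f(1_A))=1_A$ and $m_A(g(u),1_A)=g(u)$ into equalities after applying the relevant functions. Associativity and commutativity go the same way: for associativity, $m_B(m_B(u,v),w)$ unfolds to $f\bigl(m_A(g(f(m_A(g(u),g(v)))),g(w))\bigr)$, and $g\circ f\sim\mathrm{id}_A$ collapses the inner $g\circ f$ so that this becomes $f\bigl(m_A(m_A(g(u),g(v)),g(w))\bigr)$; associativity in $A$ rewrites this as $f\bigl(m_A(g(u),m_A(g(v),g(w)))\bigr)$, which is $m_B(u,m_B(v,w))$ after inserting $g\circ f$ the other way. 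Commutativity is immediate from commutativity of $m_A$ since $m_B(u,v)$ and $m_B(v,u)$ differ only by the order of the two arguments of $m_A$. This establishes that $B$ is a commutative monoid. The converse direction is symmetric: $g$ is also an equivalence, with inverse $f$, so the same construction applied to $g$ sends a commutative monoid structure on $B$ to one on $A$.

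The step I expect to require the most care is the bookkeeping in the associativity law, where the homotopy $g\circ f\sim\mathrm{id}_A$ must be applied to an argument buried inside $m_A$ — this is exactly where one invokes Lemma \ref{lem: 15-2} (functions preserve equality) to promote the pointwise homotopy to the needed equation, and one must be careful that every rewriting happens on the correct occurrence. There is no genuine obstacle beyond this, since all the types involved are sets and hence all coherence conditions among these equalities are automatic; the argument is essentially the standard "structure transport along an equivalence," specialized to the first-order algebraic theory of commutative monoids.
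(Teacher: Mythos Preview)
Your proof is correct and follows essentially the same approach as the paper's: transport the multiplication and unit along the equivalence and verify the monoid laws by unfolding and using the homotopies $g\circ f\sim\mathrm{id}$ and $f\circ g\sim\mathrm{id}$. The only cosmetic difference is that the paper explicitly carries out the direction ``$B$ a commutative monoid implies $A$ a commutative monoid'' (defining $a\times a'\coloneqq g(f(a)f(a'))$ and $1_A\coloneqq g(1_B)$) and leaves the converse as ``similarly,'' whereas you do the opposite direction first; your invocation of Lemma~\ref{lem: 15-2} makes explicit a step the paper leaves implicit.
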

\begin{proof}
Suppose $B$ is a commutative monoid. Let $a,a':A$ and let $g$ be
the inverse of $f$. Then we define $a\times a'\coloneqq g\left(f\left(a\right)f\left(a'\right)\right)$,
and let $1_{A}\coloneqq g\left(1_{B}\right)$. It follows that $f\left(a\times a'\right)=f\left(a\right)f\left(a'\right)$.
\begin{enumerate}
\item Note that $a\times1_{A}=g\left(f\left(a\right)1_{B}\right)=g\left(f\left(a\right)\right)=a$.
\item For $a,a,a'':A$, we have 
\[
\begin{array}{ccc}
a\times\left(a'\times a''\right) & = & g\left(f\left(a\right)f\left(a'\times a''\right)\right)\\
 & = & g\left(f\left(a\right)\left[f\left(a'\right)f\left(a''\right)\right]\right)\\
 & = & g\left(\left[f\left(a\right)f\left(a'\right)\right]f\left(a''\right)\right)\\
 & = & g\left(f\left(a\times a'\right)f\left(a''\right)\right)\\
 & = & \left(a\times a'\right)\times a''.
\end{array}
\]

\item The commutative law follows from the commutative law on $B$.
\end{enumerate}
The converse is proved similarly.\end{proof}
\begin{thm}
$\mathsf{No}$ has a multiplication $\times$ such that $\left(\mathsf{No},\times,1\right)$
is a commutative monoid. \label{thm: 43}\end{thm}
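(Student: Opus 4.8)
The plan is to obtain the multiplication on $\mathsf{No}$ by transporting the commutative monoid structure of $\mathsf{No}_{>0}-\mathsf{No}_{>0}$ across the equivalence supplied by Corollary \ref{cor: 36}. All the ingredients are already in place: by Theorem \ref{thm: 16-1} the type $\mathsf{No}$ is a set, and the final corollary of the previous section shows $\mathsf{No}_{>0}-\mathsf{No}_{>0}$ is a set; by Theorem \ref{thm: 40} (itself a consequence of Theorem \ref{thm: 26}) the type $\mathsf{No}_{>0}-\mathsf{No}_{>0}$ is a commutative monoid; and by Corollary \ref{cor: 36} there is an equivalence $f:\mathsf{No}\to\mathsf{No}_{>0}-\mathsf{No}_{>0}$, given explicitly before that corollary by $x\mapsto n-\left(n-x\right)$ for a positive omnific integer $n$ with $x<n$, with inverse $g$ the subtraction map $a-b\mapsto a-b$ of $\mathsf{No}$.

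First I would record these facts and then apply Theorem \ref{thm: 41} to the equivalence $f$ of Corollary \ref{cor: 36}, with $A=\mathsf{No}$ and $B=\mathsf{No}_{>0}-\mathsf{No}_{>0}$. Theorem \ref{thm: 41} then hands us a binary operation on $\mathsf{No}$, namely $x\times y\coloneqq g\left(f\left(x\right)\ast f\left(y\right)\right)$ where $\ast$ is the multiplication of Theorem \ref{thm: 26}, together with a unit $1_{\mathsf{No}}\coloneqq g\left(1\right)$, and it guarantees that $\left(\mathsf{No},\times,1_{\mathsf{No}}\right)$ satisfies the monoid laws and commutativity. Since $g$ is just subtraction in $\mathsf{No}$ and $f(x)\ast f(y)$ is already an element of $\mathsf{No}$, this operation unwinds to the concrete formula $x\times y = nm - n(m-y) - (n-x)m + (n-x)(m-y)$ whenever $n,m:\mathsf{Oz}$ are positive with $x<n$ and $y<m$, which one may remark on for the reader's benefit. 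One should also check that $1_{\mathsf{No}}$ is indeed the surreal number $1$: since $1<2$ in $\mathsf{Oz}$, we have $f(1)=2-1=1$ in $\mathsf{No}_{>0}-\mathsf{No}_{>0}$, which is the monoid identity there by Theorem \ref{thm: 26}(1), so $1_{\mathsf{No}}=g(1)=1$.

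I do not expect a genuine obstacle here; the content has all been carried in Theorems \ref{thm: 26}, \ref{thm: 41} and Corollary \ref{cor: 36}. The one point that warrants a sentence of care is simply confirming that every hypothesis of Theorem \ref{thm: 41} is met — that $f$ is honestly an equivalence (established in the discussion preceding Corollary \ref{cor: 36} via the round-trip identities $gf=\mathrm{id}$ and $fg=\mathrm{id}$, the former using Lemma \ref{lem: 19}) and that both source and target are sets — after which the conclusion is immediate.
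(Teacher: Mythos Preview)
Your proposal is correct and follows essentially the same route as the paper: invoke Corollary~\ref{cor: 36} for the equivalence, Theorem~\ref{thm: 40} for the commutative monoid structure on $\mathsf{No}_{>0}-\mathsf{No}_{>0}$, and then Theorem~\ref{thm: 41} to transport that structure to $\mathsf{No}$. Your version is more detailed than the paper's (you spell out the set hypotheses, the explicit formula for $\times$, and the verification that the transported unit is the surreal $1$), but the logical skeleton is identical.
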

\begin{proof}
There is an equivalence $f:\mathsf{No}\rightarrow\mathsf{No}_{>0}-\mathsf{No}_{>0}$
by Corollary \ref{cor: 36}, and $\mathsf{No}_{>0}-\mathsf{No}_{>0}$
is a commutative monoid by Theorem \ref{thm: 40}. Therefore, a multiplication
can be defined on $\mathsf{No}$ as in the proof of Theorem \ref{thm: 41},
which turns $\mathsf{No}$ into a commutative monoid by the same theorem.
\end{proof}
We now will show that the distributive law holds on $\mathsf{No}$.
\begin{lem}
The distributive law holds on $\mathsf{No}_{>0}-\mathsf{No}_{>0}$.
\label{lem: 44}\end{lem}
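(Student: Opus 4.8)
The plan is to deduce this lemma as an immediate instance of Theorem~\ref{thm: 27}. That theorem states that for every commutative ring $R$ with identity, the distributive law $\left(a-b\right)*\left[\left(a'-b'\right)+\left(a''-b''\right)\right]=\left(a-b\right)*\left(a'-b'\right)+\left(a-b\right)*\left(a''-b''\right)$ holds on $R-R$, where $*$ is the multiplication of Theorem~\ref{thm: 26} and $+$ is the natural addition on $R-R$ described just before Theorem~\ref{thm: 27}. Since we have already shown that $\mathsf{No}_{>0}$ is a commutative ring with identity $1$, specializing $R\coloneqq\mathsf{No}_{>0}$ yields precisely the distributive law on $\mathsf{No}_{>0}-\mathsf{No}_{>0}$.

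Before invoking Theorem~\ref{thm: 27} I would check that the three pieces of structure match up. First, the multiplication on $\mathsf{No}_{>0}-\mathsf{No}_{>0}$ meant in the lemma is the operation $*$ from Theorem~\ref{thm: 26}, the one witnessing that $\mathsf{No}_{>0}-\mathsf{No}_{>0}$ is a commutative monoid in Theorem~\ref{thm: 40}; this is the same $*$ appearing in Theorem~\ref{thm: 27}. Second, the addition on $\mathsf{No}_{>0}-\mathsf{No}_{>0}$ is the natural addition $\left(a-b\right)+\left(a'-b'\right)=\left(a+a'\right)-\left(b+b'\right)$, which lands back in $\mathsf{No}_{>0}-\mathsf{No}_{>0}$ because $\mathsf{No}_{>0}$ is closed under $+$ by Lemma~\ref{lem: 14-1}(1), and which agrees with addition in $\mathsf{No}$ under the equivalence $\mathsf{No}\simeq\mathsf{No}_{>0}-\mathsf{No}_{>0}$. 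Third, every ring axiom of $\mathsf{No}_{>0}$ used inside the proof of Theorem~\ref{thm: 27} (distributivity over this addition together with commutativity of $*$) is available from the results of the section on multiplication on the positive surreal numbers.

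With those identifications in place there is essentially no obstacle: the proof collapses to one line, namely an application of Theorem~\ref{thm: 27} with $R=\mathsf{No}_{>0}$. The only point that deserves a sentence of care is the bookkeeping in the second paragraph --- confirming that the ``$+$'' occurring in the statement of Lemma~\ref{lem: 44} is literally the addition referenced in Theorem~\ref{thm: 27}, i.e.\ the restriction of addition on $\mathsf{No}$ to differences of positive surreals --- so that no hidden hypothesis is being smuggled in. If one preferred a self-contained argument instead, one could simply repeat the computation in the proof of Theorem~\ref{thm: 27} with $a,b,a',b',a'',b''$ ranging over $\mathsf{No}_{>0}$, but this would be strictly redundant.
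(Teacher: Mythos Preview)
Your proposal is correct and matches the paper's own proof, which is simply ``By Theorem~\ref{thm: 27}.'' The additional bookkeeping you supply (identifying the multiplication with $*$ from Theorem~\ref{thm: 26} and the addition with the restriction of addition on $\mathsf{No}$) is more detail than the paper gives, but the underlying argument is identical.
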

\begin{proof}
By Theorem \ref{thm: 27}.
\end{proof}
Recall that the equivalence $f:\mathsf{No}\rightarrow\mathsf{No}_{>0}-\mathsf{No}_{>0}$
is defined as $x\mapsto n-\left(n-x\right)$, where $0<n$ and $x<n$. 
\begin{thm}
For all $x,y:\mathsf{No}$, $f\left(x\right)+f\left(y\right)=f\left(x+y\right)$.
\label{thm: 45}\end{thm}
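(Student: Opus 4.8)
The plan is to reduce the identity to the abelian-group structure on $\mathsf{No}$ (Theorem \ref{thm: 17}) together with Lemma \ref{lem: 19}. The key observation is that although the equivalence $f$ represents a surreal number $x$ as a formal difference $n-\left(n-x\right)$ of two \emph{positive} surreals, with the positive omnific integer $n$ chosen (via Lemma \ref{lem: 21}) depending on $x$, this difference recovers $x$ itself as an element of $\mathsf{No}$.

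First I would fix, for each surreal number $x$, a positive omnific integer $n_{x}$ with $x<n_{x}$, so that by definition $f\left(x\right)=n_{x}-\left(n_{x}-x\right)$, regarded as an element of the subset $\mathsf{No}_{>0}-\mathsf{No}_{>0}$ of $\mathsf{No}$. Then I would compute purely in the abelian group $\left(\mathsf{No},+,-,0\right)$: by Lemma \ref{lem: 19}, $-\left(n_{x}-x\right)=-n_{x}+x$, hence $f\left(x\right)=n_{x}+\left(-n_{x}+x\right)=\left(n_{x}-n_{x}\right)+x=0+x=x$, using associativity, commutativity, $n_{x}-n_{x}=0$, and $x+0=x$ from Theorem \ref{thm: 17}. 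The same computation gives $f\left(y\right)=y$ and $f\left(x+y\right)=x+y$.

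Next I would recall that the addition on $\mathsf{No}_{>0}-\mathsf{No}_{>0}$ is, by the discussion preceding Theorem \ref{thm: 27}, the restriction of the addition of $\mathsf{No}$ (the subset being closed under it since a sum of positive surreals is positive, by Lemma \ref{lem: 14-1}). Hence $f\left(x\right)+f\left(y\right)=x+y$ computed in $\mathsf{No}$, and by the previous paragraph this equals $f\left(x+y\right)$; this is the desired equality.

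The only point requiring care — what passes for the main obstacle here — is that the omnific integers $n_{x}$, $n_{y}$, $n_{x+y}$ attached to the three terms are in general distinct, so one cannot argue by a purely formal cancellation of the differences; one must genuinely pass through the group identities to see that each value of $f$ collapses to the underlying surreal number. An equivalent route, should one prefer not to recompute $f$, is to note that the inverse equivalence $g$ of Corollary \ref{cor: 36} is literally the inclusion $\mathsf{No}_{>0}-\mathsf{No}_{>0}\hookrightarrow\mathsf{No}$, which preserves addition by the very definition of addition on $\mathsf{No}_{>0}-\mathsf{No}_{>0}$; then $g\left(f\left(x\right)+f\left(y\right)\right)=g\left(f\left(x\right)\right)+g\left(f\left(y\right)\right)=x+y=g\left(f\left(x+y\right)\right)$, and applying $f$ and using $f\circ g\sim\textrm{id}$ gives $f\left(x\right)+f\left(y\right)=f\left(x+y\right)$.
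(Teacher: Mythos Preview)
Your argument is correct. Both of your routes rest on the abelian-group identities of Theorem~\ref{thm: 17} and Lemma~\ref{lem: 19}, which is also what the paper uses, but the paper organizes the computation differently: rather than first collapsing each $f(z)$ to $z$, it keeps the difference representations, rearranges
\[
\bigl[n-(n-x)\bigr]+\bigl[m-(m-y)\bigr]=(n+m)-\bigl[(n+m)-(x+y)\bigr],
\]
and then verifies via Lemma~\ref{lem: 14-1} that $0<n+m$ and $x+y<n+m$, so that this expression is a legitimate value of $f$ at $x+y$. Your approach makes the independence of $f(z)$ from the chosen bound explicit (since $f(z)=z$ in $\mathsf{No}$), which spares you from producing a common bound for $x+y$; the paper's approach instead exhibits a concrete witness that the sum lies in $\mathsf{No}_{>0}-\mathsf{No}_{>0}$. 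Your second route, pushing the additivity through the inverse $g$, is essentially Theorem~\ref{thm: 46} read backwards together with the equivalence of Corollary~\ref{cor: 36}.
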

\begin{proof}
We have 
\[
\begin{array}{ccc}
f\left(x\right)+f\left(y\right) & = & n-\left(n-x\right)+m-\left(m-y\right)\\
 & = & n+m-\left[\left(n-x\right)+\left(m-y\right)\right]\\
 & = & n+m-\left[\left(n+m\right)-\left(x+y\right)\right].
\end{array}
\]

Note that $0<n+m$ by Lemma \ref{lem: 14-1}(2) and $x+y<n+m$ by
Lemma \ref{lem: 14-1}(3). Thus, $f\left(x\right)+f\left(y\right)=f\left(x+y\right)$.
\end{proof}
Recall also that the inverse $\mathsf{No}\leftarrow\mathsf{No}_{>0}-\mathsf{No}_{>0}:g$
of $f$ is defined as $a-b\mapsto a-b$, where the right hand $a-b$
is substraction in $\mathsf{No}$, and addition on $\mathsf{No}_{>0}-\mathsf{No}_{>0}$
is addition on $\mathsf{No}$. We then have
\begin{thm}
For all $a-b,a'-b':\mathsf{No}_{>0}-\mathsf{No}_{>0}$, $g\left(\left(a-b\right)+\left(a'-b'\right)\right)=g\left(a-b\right)+g\left(a'-b'\right)$.
\label{thm: 46}\end{thm}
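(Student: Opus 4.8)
The plan is to reduce the statement to facts already in hand: that $f$ and $g$ are mutually inverse equivalences between $\mathsf{No}$ and $\mathsf{No}_{>0}-\mathsf{No}_{>0}$ (Corollary \ref{cor: 36}), and that $f$ is additive (Theorem \ref{thm: 45}). Concretely, I would begin by setting $x\coloneqq g\left(a-b\right)$ and $y\coloneqq g\left(a'-b'\right)$ in $\mathsf{No}$. Since $f\circ g\sim\mathrm{id}$ by Corollary \ref{cor: 36}, we get $f\left(x\right)=a-b$ and $f\left(y\right)=a'-b'$ as elements of $\mathsf{No}_{>0}-\mathsf{No}_{>0}$; in particular every element of $\mathsf{No}_{>0}-\mathsf{No}_{>0}$ is in the image of $f$.

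Next I would invoke Theorem \ref{thm: 45} on $x,y$ to obtain $f\left(x\right)+f\left(y\right)=f\left(x+y\right)$, where the $+$ on the left is the addition on $\mathsf{No}_{>0}-\mathsf{No}_{>0}$ (which, as recalled before Theorem \ref{thm: 27}, is just the restriction of the addition on $\mathsf{No}$) and $x+y$ is computed in $\mathsf{No}$. Substituting $f\left(x\right)=a-b$ and $f\left(y\right)=a'-b'$ turns this into $\left(a-b\right)+\left(a'-b'\right)=f\left(x+y\right)$.

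Finally I would apply $g$ to both sides and use $g\circ f\sim\mathrm{id}$, again from Corollary \ref{cor: 36}: $g\left(\left(a-b\right)+\left(a'-b'\right)\right)=g\left(f\left(x+y\right)\right)=x+y=g\left(a-b\right)+g\left(a'-b'\right)$, which is precisely the claim.

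The only delicate point — and the nearest thing to an obstacle — is bookkeeping about the two notions of $+$: one must check that the addition declared on the monoid $\mathsf{No}_{>0}-\mathsf{No}_{>0}$ is literally the addition of $\mathsf{No}$ and that $g$ is literally subtraction in $\mathsf{No}$, so that the displayed chain of equalities is an unfolding of definitions together with the abelian-group laws of Theorem \ref{thm: 17}. If one prefers to avoid Theorem \ref{thm: 45}, a direct argument works just as well: by definition of the addition on $\mathsf{No}_{>0}-\mathsf{No}_{>0}$, $g\left(\left(a-b\right)+\left(a'-b'\right)\right)$ is the element $\left(a+a'\right)-\left(b+b'\right)$ read as subtraction in $\mathsf{No}$, while $g\left(a-b\right)+g\left(a'-b'\right)=\left(a-b\right)+\left(a'-b'\right)$ in $\mathsf{No}$, and the two agree by associativity, commutativity (Theorem \ref{thm: 17}) and Lemma \ref{lem: 19}. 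I would present the first route, since it reuses Theorem \ref{thm: 45} and is essentially immediate.
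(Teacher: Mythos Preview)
Your proposal is correct, but the primary route you present is not the one the paper takes. The paper's proof is exactly your second, ``direct'' alternative: it unfolds $g$ as literal subtraction in $\mathsf{No}$ and uses the abelian-group identities to rewrite
\[
g\bigl((a-b)+(a'-b')\bigr)=g\bigl((a+a')-(b+b')\bigr)=(a+a')-(b+b')=(a-b)+(a'-b')=g(a-b)+g(a'-b').
\]
It does not invoke Theorem~\ref{thm: 45} at all.

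Your preferred route is a clean, formal argument that the inverse of an additive equivalence is additive: you only use $f\circ g\sim\mathrm{id}$, $g\circ f\sim\mathrm{id}$, and $f(x)+f(y)=f(x+y)$. This has the virtue of being reusable in any such situation and of hiding the bookkeeping about the two ``$+$'' signs inside Theorem~\ref{thm: 45}. The paper's route is more elementary and makes explicit that $g$ is essentially the identity on underlying elements, so additivity is immediate once one remembers that the addition on $\mathsf{No}_{>0}-\mathsf{No}_{>0}$ was declared to be that of $\mathsf{No}$. Either way the content is the same; your write-up is fine, though if you want to match the paper you should lead with the direct computation rather than with Theorem~\ref{thm: 45}.
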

\begin{proof}
We have 
\[
\begin{array}{ccc}
g\left(\left(a-b\right)+\left(a'-b'\right)\right) & = & g\left(\left(a+a'\right)-\left(b+b'\right)\right)\\
 & = & \left(a+a'\right)-\left(b+b'\right)\\
 & = & \left(a-b\right)+\left(a'-b'\right)\\
 & = & g\left(a-b\right)+g\left(a'-b'\right).
\end{array}
\]
\end{proof}
\begin{thm}[Distributive Law]
For all $x,y,z:\mathsf{No}$, $x\times\left(y+z\right)=x\times y+x\times z$.\end{thm}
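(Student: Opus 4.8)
The plan is to transport the distributive law along the equivalence $f:\mathsf{No}\rightarrow\mathsf{No}_{>0}-\mathsf{No}_{>0}$ of Corollary \ref{cor: 36}, in exactly the same spirit that the monoid structure was transported in Theorem \ref{thm: 43}. Recall from the proof of Theorem \ref{thm: 41} that the multiplication $\times$ on $\mathsf{No}$ is defined by $x\times y\coloneqq g\left(f\left(x\right)*f\left(y\right)\right)$, where $g$ is the inverse of $f$ and $*$ is the multiplication on $\mathsf{No}_{>0}-\mathsf{No}_{>0}$ of Theorem \ref{thm: 26}; in particular $f\left(x\times y\right)=f\left(x\right)*f\left(y\right)$ and $g\circ f\sim\mathrm{id}$.

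First I would rewrite the left-hand side using this identity together with the additivity of $f$ (Theorem \ref{thm: 45}):
\[
f\left(x\times\left(y+z\right)\right)=f\left(x\right)*f\left(y+z\right)=f\left(x\right)*\left(f\left(y\right)+f\left(z\right)\right).
\]
Next I would apply the distributive law on $\mathsf{No}_{>0}-\mathsf{No}_{>0}$ (Lemma \ref{lem: 44}):
\[
f\left(x\right)*\left(f\left(y\right)+f\left(z\right)\right)=f\left(x\right)*f\left(y\right)+f\left(x\right)*f\left(z\right)=f\left(x\times y\right)+f\left(x\times z\right).
\]
Invoking Theorem \ref{thm: 45} once more yields $f\left(x\times y\right)+f\left(x\times z\right)=f\left(x\times y+x\times z\right)$, so that $f\left(x\times\left(y+z\right)\right)=f\left(x\times y+x\times z\right)$. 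Applying $g$ to both sides and using $g\circ f\sim\mathrm{id}$ then gives $x\times\left(y+z\right)=x\times y+x\times z$.

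The only point requiring care is the coherence of the two additions involved: the $+$ occurring inside $\mathsf{No}_{>0}-\mathsf{No}_{>0}$ in Lemma \ref{lem: 44} is the one inherited from $\mathsf{No}$, and the argument needs $f$ to carry the addition of $\mathsf{No}$ to that addition and $g$ to carry it back. This is precisely what Theorems \ref{thm: 45} and \ref{thm: 46} supply, so once they are cited the remainder is a short chain of substitutions with no further obstacle; in particular, full injectivity of $f$ is not needed beyond the retraction identity $g\circ f\sim\mathrm{id}$, which is already part of $f$ being an equivalence.
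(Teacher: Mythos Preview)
Your proof is correct and follows essentially the same route as the paper: transport the distributive law along the equivalence $f:\mathsf{No}\to\mathsf{No}_{>0}-\mathsf{No}_{>0}$, using additivity of $f$ (Theorem \ref{thm: 45}) and the distributive law on $\mathsf{No}_{>0}-\mathsf{No}_{>0}$ (Lemma \ref{lem: 44}). The only cosmetic difference is that the paper finishes with the additivity of $g$ (Theorem \ref{thm: 46}) to split $g\bigl(f(x)f(y)+f(x)f(z)\bigr)$, whereas you instead apply Theorem \ref{thm: 45} a second time to combine $f(x\times y)+f(x\times z)$ and then cancel $g\circ f$; these are interchangeable moves.
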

\begin{proof}
Recall that $\mathsf{No}$ has a multiplication $\times$ by Theorem
\ref{thm: 43}, and that multiplication is defined in the proof of
Theorem \ref{thm: 41}. We need only show $g\left(f\left(x\right)f\left(y+z\right)\right)=g\left(f\left(x\right)f\left(y\right)\right)+g\left(f\left(x\right)f\left(z\right)\right)$,
where $f:\mathsf{No}\rightarrow\mathsf{No}_{>0}-\mathsf{No}_{>0}$
is an equivalence and $g$ is its inverse, so 
\[
\begin{array}{cccc}
g\left(f\left(x\right)f\left(y+z\right)\right) & = & g\left(f\left(x\right)\left[f\left(y\right)+f\left(z\right)\right]\right) & \textrm{ by Theorem \ref{thm: 45}}\\
 & = & g\left(f\left(x\right)f\left(y\right)+f\left(x\right)f\left(z\right)\right) & \textrm{ by Lemma \ref{lem: 44} }\\
 & = & g\left(f\left(x\right)f\left(y\right)\right)+g\left(f\left(x\right)f\left(z\right)\right) & \textrm{ by Theorem \ref{thm: 46}}.
\end{array}
\]
\end{proof}
\begin{cor}
$\mathsf{No}$ is a commutative ring with identity $1$.
\end{cor}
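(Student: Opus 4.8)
The plan is to read off the three defining clauses of a commutative ring with identity --- in the sense of the definition stated just before Theorem \ref{thm: 26} --- directly from results already in hand. First I would recall that $\mathsf{No}$ is a set, which is Theorem \ref{thm: 16-1}; this is needed because the definition of a commutative ring asks the underlying type to be a set. Second, $\left(\mathsf{No},+,-,0\right)$ is an abelian group by Theorem \ref{thm: 17}, whose four parts are exactly the unit law, the inverse law, associativity, and commutativity. Third, $\left(\mathsf{No},\times,1\right)$ is a commutative monoid by Theorem \ref{thm: 43}. Fourth, the distributive law $x\times\left(y+z\right)=x\times y+x\times z$ for all $x,y,z:\mathsf{No}$ is the content of the Distributive Law theorem immediately preceding this corollary.

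Having these four facts, the corollary is simply the observation that they are precisely the hypotheses appearing in the definition of a \emph{commutative ring with identity} $1$, so the proof is a citation of Theorems \ref{thm: 16-1}, \ref{thm: 17}, \ref{thm: 43} and that Distributive Law theorem, together with the remark that the $+$, $-$, $0$, $\times$, $1$ named in the definition are the very operations and constants produced in those results.

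I expect no real obstacle at this stage: all the genuine work has already been done, namely building the multiplication on $\mathsf{No}_{>0}$ in Section 8, establishing the equivalence $\mathsf{No}\simeq\mathsf{No}_{>0}-\mathsf{No}_{>0}$ of Corollary \ref{cor: 36}, and transporting the commutative-monoid and distributive structure along that equivalence while checking its compatibility with addition via Theorems \ref{thm: 41}, \ref{thm: 45}, and \ref{thm: 46}. The only point worth being slightly careful about --- and it has already been handled --- is that the transported multiplication $\times$ interacts correctly with the pre-existing addition $+$ on $\mathsf{No}$; once that is granted, the final corollary is pure bookkeeping, and I would present it in a single line.
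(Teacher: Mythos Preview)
Your proposal is correct and matches the paper's approach: the corollary is stated in the paper without proof, as it follows immediately from the preceding results you cite (Theorems \ref{thm: 16-1}, \ref{thm: 17}, \ref{thm: 43}, and the Distributive Law). Your write-up simply makes explicit the bookkeeping the paper leaves implicit.
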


\section{Apartness Relation on $\mathsf{No}$}

Given a type $A$, we call a relation $\neq:A\rightarrow A\rightarrow\mathcal{U}$
a \textit{difference relation }if, for all $a:A$, $\neg a\neq a$
and for all $a,b:A$, $a\neq b\rightarrow b\neq a$. A difference
relation on $A$ is an \textit{apartness} if, for all $a,b,c:A$,
$a\neq b\rightarrow a\neq c+c\neq b$ (cotransitivity). An apartness
is a positive notion for two elements being unequal; for more on apartnesses,
see \cite{key-20,key-21,key-28,key-8-1}. In $\mathsf{No}$, we write
$x\neq y$ to mean $\left\Vert x<y+y<x\right\Vert $. 
\begin{thm}
The relation $\neq$ on $\mathsf{No}$ is an apartness.\end{thm}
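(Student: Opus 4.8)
The plan is to verify directly the three defining clauses of an apartness for the relation $x\neq y:\equiv\left\Vert \left(x<y\right)+\left(y<x\right)\right\Vert $ on $\mathsf{No}$: irreflexivity $\neg\,x\neq x$, symmetry $x\neq y\to y\neq x$, and cotransitivity $x\neq y\to\left\Vert \left(x\neq z\right)+\left(z\neq y\right)\right\Vert $. The crucial structural observation is that every goal here is a mere proposition (a truncation, a negation, or a truncated coproduct), so by the universal property of propositional truncation I may at each stage replace a truncated hypothesis $\left\Vert A\right\Vert $ by an honest element of $A$ and do case analysis on coproducts. The only nontrivial input will be Corollary \ref{cor: 19}, that $<$ on $\mathsf{No}$ is cotransitive.

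First I would prove irreflexivity. Since $\emptyset$ is a mere proposition, to obtain $\neg\,x\neq x$ it suffices to build a map $\left(x<x\right)+\left(x<x\right)\to\emptyset$; each summand is $x<x$, and asymmetry of $<$ on $\mathsf{No}$ (immediate from the definition of $<$ in Section 2, taking $y:\equiv x$) turns $x<x$ into $\neg\,x<x$, a contradiction. Next, symmetry: the codomain $y\neq x$ is a mere proposition, so it suffices to give $\left(x<y\right)+\left(y<x\right)\to y\neq x$, which is just swapping the two summands and re-truncating.

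For cotransitivity I would assume $x\neq y$ and aim for $\left\Vert \left(x\neq z\right)+\left(z\neq y\right)\right\Vert $, a mere proposition; so by truncation it suffices to treat an element of $\left(x<y\right)+\left(y<x\right)$. If $x<y$, Corollary \ref{cor: 19} yields $\left\Vert \left(x<z\right)+\left(z<y\right)\right\Vert $, and since the target is still propositional I split again: $x<z$ gives $x\neq z$ (left injection, then truncate), and $z<y$ gives $z\neq y$ (right injection, then truncate). If instead $y<x$, Corollary \ref{cor: 19} gives $\left\Vert \left(y<z\right)+\left(z<x\right)\right\Vert $, and $y<z$ gives $z\neq y$ while $z<x$ gives $x\neq z$. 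Every branch lands in the desired truncated coproduct, so $\neq$ is cotransitive and hence an apartness.

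I do not expect a genuine obstacle: the mathematical content is entirely carried by Corollary \ref{cor: 19}, which already rests on the tree machinery of Sections 5--6. The one thing to execute carefully is the truncation bookkeeping — repeatedly appealing to "the goal is a mere proposition" in order to strip $\left\Vert -\right\Vert $ off the hypotheses before case-splitting, and re-applying the truncation constructor after each choice of injection — but this is routine rather than delicate.
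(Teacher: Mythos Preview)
Your proposal is correct and follows essentially the same route as the paper: irreflexivity from asymmetry of $<$, symmetry from swapping the summands (the paper phrases this as $A+B\simeq B+A$), and cotransitivity of $\neq$ from cotransitivity of $<$ via Corollary \ref{cor: 19}. The only difference is that you spell out the truncation-elimination bookkeeping that the paper leaves implicit, which is entirely appropriate.
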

\begin{proof}
That $\neq$ is a difference relation follows from the asymmetry of
$<$ and from $A+B\simeq B+A$ for all types $A,B:\mathcal{U}$, and
cotransitivity of $\neq$ follows from cotransitivity of $<$ by Corollary
\ref{cor: 19}.
\end{proof}

\section{Problems}

\subsection{Axiomatizing the surreal numbers}

\textit{Can there be a possibly independent list of axioms for the
surreal numbers? }

If there can be such a list, it might be easier to prove theorems
about the surreal numbers because these axioms would capture what
the surreal numbers are. Here is an attempt, which is far from being
a complete answer. The surreal numbers are
\begin{enumerate}
\item a set $X$, as defined in Section 2;
\item $X$ has a binary relation $<$ that turns it into an ordered set%
\footnote{See Section 2.%
}. A binary relation $\leq$ can be defined either as $x\leq y$ if
$\neg y<x$ or as $x\leq y$ if for all $z:X$, $z<x$ implies $z<y$,
and $y<z$ implies $x<z$.
\item $X$ contains the omnific integers $\mathsf{Oz}$. 
\item $X$ satisfies the weak-Archimedean condition, as introduced in Section
5.
\end{enumerate}

\subsection{The surreal numbers as the field of fractions of the omnific integers:
a constructive take}

\textit{Is there a proof that each surreal number is the quotient
of two omnific integers, without the assumption that $<$ be trichotomous?}

In \cite{key-12-1}, each surreal number is said to be the quotient
of two omnific integers (Theorem 32), and the proof starts by picking
an arbitrary surreal number and considering its normal form, but to
show that each surreal number has a normal form (p. 32), the positive
surreal numbers are first considered. A similar move is used in \cite{key-14-1}
(See Theorems 8.3 and 5.6.). In either case, the normal form of a
surreal number $x$ is constructed when $x<0$, $x=0$, or $0<x$.
Note that the relation $<$ on the surreal numbers is not trichotomous%
\footnote{See Section 3.%
} in our development.

\end{document}